\documentclass[a4paper,leqno]{amsart}

\usepackage{latexsym}
\usepackage[english]{babel}
\usepackage{fancyhdr}
\usepackage[mathscr]{eucal}
\usepackage{amsmath}
\usepackage{mathrsfs}
\usepackage{amsthm}
\usepackage{amsfonts}
\usepackage{amssymb}
\usepackage{amscd}
\usepackage{bbm}
\usepackage{graphicx}
\usepackage{graphics}
\usepackage{latexsym}
\usepackage{color}
\usepackage{dsfont}

\newcommand{\ud}{\mathrm{d}}

\newcommand{\im}{\mathfrak{Im}}

\newcommand{\ii}{\mathrm{i}}

\newcommand{\C}{\mathbb C}

\newcommand{\R}{\mathbb R}

\theoremstyle{plain}
\newtheorem{theorem}{Theorem}[section]
\newtheorem{lemma}[theorem]{Lemma}

\newtheorem{corollary}[theorem]{Corollary}
\newtheorem{proposition}[theorem]{Proposition}

\theoremstyle{definition}

\newtheorem{remark}[theorem]{Remark}
\newtheorem*{remark*}{Remark}

\numberwithin{equation}{section}

\begin{document}

\title[Fractional powers of singular perturbations of the Laplacian]
{On fractional powers of singular perturbations of the Laplacian}
\author[V.~Georgiev]{Vladimir Georgiev}
\address[V.~Georgiev]{Department of Mathematics \\ University of Pisa \\ Largo Bruno Pontecorvo 5 \\ 56127 Pisa (Italy)  \\ and
\\ Faculty of Science and Engineering \\ Waseda University \\
 3-4-1, Okubo, Shinjuku-ku, Tokyo 169-8555 \\
Japan}
\email{georgiev@dm.unipi.it}
\author[A.~Michelangeli]{Alessandro Michelangeli}
\address[A.~Michelangeli]{International School for Advanced Studies -- SISSA \\ via Bonomea 265 \\ 34136 Trieste (Italy).}
\email{alemiche@sissa.it}
\author[R.~Scandone]{Raffaele Scandone}
\address[R.~Scandone]{International School for Advanced Studies -- SISSA \\ via Bonomea 265 \\ 34136 Trieste (Italy).}
\email{rscandone@sissa.it}

\begin{abstract}
We qualify a relevant range of fractional powers of the so-called Hamiltonian of point interaction in three dimensions, namely the singular perturbation of the negative Laplacian with a contact interaction supported at the origin. In particular we provide an explicit control of the domain of such a fractional operator and of its decomposition into regular and singular parts. We also qualify the norms of the resulting singular fractional Sobolev spaces and their mutual control with the corresponding classical Sobolev norms.
\end{abstract}

\date{\today}

\subjclass[2000]{46E35, 47A60, 47B25, 47G10, 81Q10, 81Q80}
\keywords{Point interactions. Singular perturbations of the Laplacian. Regular and singular component of a point-interaction operator.}

\thanks{Partially supported by the 2014-2017 MIUR-FIR
grant ``\emph{Cond-Math: Condensed Matter and Mathematical Physics}'' code RBFR13WAET. The first author was supported in part by  INDAM, GNAMPA - Gruppo Nazionale per l'Analisi Matematica, la Probabilita e le loro Applicazion, by Institute of Mathematics and Informatics, Bulgarian Academy of Sciences and by Top Global University Project, Waseda University.}

\maketitle


\section{Introduction}

It is customary to refer to a \emph{singular perturbation of the $d$-dimensional Laplacian} as a self-adjoint operator on $L^2(\mathbb{R}^d)$ that acts as the Laplacian on sufficiently smooth functions with compact support in $\mathbb{R}^d\!\setminus\!\{x_0\}$ for a given $x_0\in\mathbb{R}^d$. Apart from the trivial case of the Laplacian defined on $H^2(\mathbb{R}^d)$, the constraint of self-adjointness induces a non-trivial action on a larger domain of functions that do not necessarily vanish around $x_0$, whence the terminology of singular perturbation at the point $x_0$. More generally one speaks of singular perturbations supported on a manifold $\Sigma\subset\mathbb{R}^d$, a special case of which is when $\Sigma$ is discrete and consists of finitely many or infinite points -- in fact, the higher the co-dimension of $\Sigma$, the more complicated the perturbation.

Singular perturbations arise naturally in the context of quantum systems of particles subject to inter-particle interactions, or to interactions with certain fixed points or surfaces, which have an extremely short range, virtually equal to zero. They correspond to formal Schr\"{o}dinger operators $-\Delta+\delta_\Sigma$, an idealisation of an ordinary Schr\"{o}dinger operator $-\Delta+V_\Sigma$ where the potential is very much peaked and shrunk around the manifold $\Sigma$. Remarkably, this turns out to be a very realistic model of various systems under suitable experimental conditions, such as ultra-cold gases with almost zero-range interactions or quantum particles in interaction with wires or membranes.

Mathematically, however, although the replacement of an actual (and possibly very complicated) potential $V_\Sigma$ with a formal $\delta_\Sigma$ results in a significant simplification of many formal computations, yet the general theory is more involved than the theory of ordinary Schr\"{o}dinger operators.

The object of this work is the prototypical case of one-point perturbations in $d=3$ dimensions, setting for concreteness $x_0=0$: thus, the formal operator of interest is $-\Delta+\delta(x)$, which can be thought of as the Hamiltonian of a two-particle system with a delta-like interaction in the relative variable $x=x_1-x_2$.

More rigorously, we deal with the self-adjoint extensions of the positive and densely defined symmetric operator $-\Delta|_{C^\infty_0(\mathbb{R}^3\setminus\{0\})}$ on $L^2(\mathbb{R}^3)$. This is today a well-known class of operators, since the first rigorous attempt \cite{Berezin-Faddeev-1961} by Berezin and Faddeev in 1961 and the seminal work \cite{AHK-1981-JOPTH} by Albeverio and H\o{}egh-Krohn in 1981. It is in fact a one-parameter family $\{-\Delta_\alpha\,|\,\alpha\in(-\infty,+\infty]\}$ of self-adjoint extensions of $-\Delta|_{C^\infty_0(\mathbb{R}^3\setminus\{0\})}$, the parameter $\alpha$ expressing, in suitable units, the inverse scattering length of the interaction supported at $x_0=0$. The special extension relative to $\alpha=\infty$ is the self-adjoint negative Laplacian on $L^2(\mathbb{R}^3)$, with domain $H^2(\mathbb{R}^3)$, all other extensions representing non-trivial operators of point interaction at $x_0=0$.
We cast in Section \ref{sec:3D-delta} a detailed summary of the main features and properties of the operator $-\Delta_\alpha$.


Each realisation $-\Delta_\alpha$ is semi-bounded from below, and positive for $\alpha\geqslant 0$. Thus, up to a non-essential shift we can restrict ourselves to the case of non-negative $-\Delta_\alpha$, namely, non-negative $\alpha$. In this respect, our concern for the present work is the qualification of the fractional powers of the operator $-\Delta_\alpha$, primarily the domain and the action of such powers. We therefore focus on the operators $(-\Delta_\alpha)^{s/2}$, $s\in\mathbb{R}$ thus denoting the number of `\emph{singular fractional derivatives}', aiming at covering the regime of main relevance, that is, $s\in(0,2)$ (the power $s=0$ corresponds to the identity operator, the power $s=2$ corresponds to the actual $-\Delta_\alpha$).

Among the motivations for the interest on $(-\Delta_\alpha)^{s/2}$, central is surely the observation that the domain of $(-\Delta_\alpha)^{s/2}$ provides a `singular-perturbed' version of the classical Sobolev space $H^s(\mathbb{R}^3)$ -- we shall denote it with $H_\alpha^s(\mathbb{R}^3)$ in our results. In turn, the knowledge of such singular Sobolev spaces, of their induced singular Sobolev norms, and of the mutual control between classical and singular Sobolev norms, constitutes a crucial tool for the study of the well-posedness of semi-linear `singular' Schr\"{o}dinger equations of the form
\[
 \ii\partial_t u\;=\;(-\Delta_\alpha)u+\mathcal{N}(u)
\]
with non-linearities of relevance such as $\mathcal{N}(u)=|u|^\gamma u$ or $\mathcal{N}(u)=|x|^{-\gamma}*|u|^2$, $\gamma>0$. These are non-linear PDE's that model, in a suitable regime, the presence of a localised impurity.
The analogue of the energy space would therefore be $H_\alpha^1(\mathbb{R}^3)$, and one would like to address also a higher or lower regularity theory, whence the importance of the understanding of the spaces $H_\alpha^s(\mathbb{R}^3)$. Dispersive and scattering properties of the linear propagator $t\mapsto e^{\ii t\Delta_\alpha}$ have been the object of past, as well as recent extensive studies \cite{Scarlatti-Teta-1990,Dancona-Pierfelice-Teta-2006,Iandoli-Scandone-2017,DMSY-2017}, that include Strichartz estimates and $L^p$-boundedness of the associated wave operators, which would be then natural to combine with a systematic information on the singular fractional Sobolev spaces.

On a more technical level, one of our main questions and of the crucial properties in applications, concerns the \emph{structure} of a generic function in the singular Sobolev space $H_\alpha^s(\mathbb{R}^3)$. It is indeed well known (and we review it in Section \ref{sec:3D-delta}) that the domain of $-\Delta_\alpha$ consists of functions that are decomposable uniquely into a `regular' $H^2$-part plus a `singular' part that is a multiple of the Green's function of the three-dimensional negative Laplacian, i.e., $(4\pi|x|)^{-1}\exp(-|x|)$, with a very special constant of proportionality that qualifies the link between regular and singular part, and in fact is the precise signature of the interaction supported at the origin.

In the first of our main results, Theorem \ref{thm:domain_s}, we determine the precise structure of the singular Sobolev space $H_\alpha^s(\mathbb{R}^3)$, identifying regular and singular part of a generic $g\in H_\alpha^s(\mathbb{R}^3)$ in all the regimes of $s$ for which such decomposition is meaningful. In our second main result, Theorem \ref{thm:equiv_of_norms}, we present a mutual control between classical and singular Sobolev norms, and in our third main result, Theorem \ref{thm:fract_pow_formulas}, we find an explicit formula for the computation of $(-\Delta_\alpha)^{s/2}u$.

These results and related remarks are stated in Section \ref{sec:main_results}. In particular, there arise three natural regimes of increasing regularity, $s\in(0,\frac{1}{2})$, $s\in(\frac{1}{2},\frac{3}{2})$, and $s\in(\frac{3}{2},2)$: the first is so low that no canonical decomposition between regular and singular part is possible; the second is large enough to produce indeed a decomposition, however with no constraint between regular and singular component; the third is so high as to induce a constraint between the two components, which is completely analogous to what was already known for the space $H_\alpha^2(\mathbb{R}^3)$, i.e., the domain of $-\Delta_\alpha$. The transition cases $s=\frac{1}{2}$ and $s=\frac{3}{2}$ are discussed separately in Section \ref{sec:main_results} and then in Propositions \ref{prop:domain_s_12} and \ref{prop:domain_s_32}.

In Sections \ref{sec:canonical-decomposition} through \ref{sec:fractional_maps} we develop an amount of preparatory material for the proof of our main results, which is then the object of our concluding Section \ref{sec:proofs}. In particular, in Section \ref{sec:canonical-decomposition}  we establish a spectral-theorem-based canonical decomposition of the domain of $(-\Delta_\alpha)^{s/2}$ and in Section \ref{sec:regularity} we study the regularity of each term of such a decomposition. This leads us to identify convenient subspaces of the fractional space $H_\alpha^2(\mathbb{R}^3)$  in Section \ref{sec:subspaces_of_Ds}, an information that we find convenient for the sake of clarity to re-cast in an operator-theoretic language in terms of suitable fractional maps, Section \ref{sec:fractional_maps}. A final Appendix contains the detail of a Schur-test bound that we used systematically for the estimate of the norm of a number of integral operators.

\section{Three-dimensional Laplacian with singular perturbation}\label{sec:3D-delta}

The concise review in this Section is based on \cite[Chapter I.1]{albeverio-solvable} and \cite[Section 3]{MO-2016}.

The class of self-adjoint extensions in $L^2(\mathbb{R}^3)$ of the positive and densely defined symmetric operator $-\Delta|_{C^\infty_0(\mathbb{R}^3\setminus\{0\})}$ is a one-parameter family of operators $-\Delta_\alpha$, $\alpha\in(-\infty,+\infty]$, defined by
\begin{equation}\label{eq:op_dom}
 \mathcal{D}(-\Delta_\alpha)\;=\;\Big\{g\in L^2(\mathbb{R}^3)\,\Big|\,g=f_\lambda+\frac{f_\lambda(0)}{\alpha+\frac{\sqrt{\lambda}}{4\pi}}\,G_\lambda\textrm{ with }f_\lambda\in H^2(\mathbb{R}^3)\Big\}
\end{equation}
and
\begin{equation}\label{eq:opaction}
 (-\Delta_\alpha+\lambda)\,g\;=\;(-\Delta+\lambda)\,f_\lambda\,,
\end{equation}
where $\lambda>0$ is an arbitrarily fixed constant and
\begin{equation}\label{eq:defGlambda}
G_\lambda(x)\;=\;\frac{e^{-\sqrt{\lambda}\,|x|}}{4\pi |x|}\,.
\end{equation}
The above decomposition of a generic $g\in\mathcal{D}(-\Delta_\alpha)$ is unique and holds true for every chosen $\lambda$. The same formulas are valid also for $\lambda=-z^2$ for $z\in\mathbb{C}$, $\im z>0$.

The extension $-\Delta_{\alpha=\infty}$ is the Friedrichs extension and is precisely the self-adjoint $-\Delta$ on $L^2(\mathbb{R}^3)$ with domain $H^2(\mathbb{R}^3)$.

Thanks to the continuity of $f_\lambda$, the boundary condition holding for a generic $g\in\mathcal{D}(-\Delta_\alpha)$ reads $g(x)\approx f_\lambda(0)(1+(4\pi\alpha+\sqrt{\lambda})^{-1}|x|^{-1})$ as $x\to 0$, and hence also, owing to the arbitrariness of $\lambda>0$,
\begin{equation}\label{eq:BPcontact}
g(x)\;=\;\mathrm{const}\cdot\Big(\frac{1}{|x|}-\frac{1}{-(4\pi\alpha)^{-1}}\Big)+o(1)\qquad\mathrm{as}\;\;x\to 0\,.
\end{equation}
The latter is the short-range asymptotics typical of the low-energy bound state of a potential with almost zero support and $s$-wave scattering length $a=-(4\pi\alpha)^{-1}$, as was first recognised by Bethe and Peierls \cite{Bethe_Peierls-1935} (originally in the form $\partial_r(rg)-4\pi\alpha rg\to 0$), whence the name of \emph{Bethe-Peierls contact condition}.

Clear consequences of \eqref{eq:op_dom} or \eqref{eq:BPcontact} above are: on $H^2$-functions vanishing at $x=0$ the operator $-\Delta_\alpha$ acts precisely as $-\Delta$; moreover, the only singularity that the elements of $\mathcal{D}(-\Delta_\alpha)$ may display at $x=0$ is of the form $|x|^{-1}$. This suggests that $-\Delta g$ fails to be in $L^2(\mathbb{R}^3)$ by a distributional contribution removing which yields $-\Delta_\alpha g$. This is precisely what can be proved:
\begin{equation}\label{eq:renorm}
-\Delta_\alpha g\;=\;-\Delta g-\Big(\lim_{x\to 0}|x|g(x)\Big)\,\delta_0\,,\qquad g\in\mathcal{D}(-\Delta_\alpha)\,.
\end{equation}
Identity \eqref{eq:renorm} indicates that $-\Delta_\alpha g$ may be thought of a suitable \emph{renormalisation} of $-\Delta g$: in fact, in the r.h.s.~there is a difference of two distributions which gives eventually a $L^2$-function.

Another relevant form of the boundary condition for $g\in\mathcal{D}(-\Delta_\alpha)$ is available in  Fourier transform. The following limit is finite
\begin{equation}\label{eq:xi-finite}
\xi\;=\;\lim_{R\to+\infty}\frac{1}{4\pi R}\int_{\substack{\,p\in\mathbb{R}^3 \\ \! |p|<R}}\,{\widehat g(p) \,\ud p}
\end{equation}
and is customarily referred to as the \emph{charge} of $g$, in terms of which one has the asymptotics
\begin{equation}\label{eq:TMS_cond_asymptotics_1}
\int_{\substack{\,p\in\mathbb{R}^3 \\ \! |p|<R}}\,{\widehat g(p) \,\ud p}\;=\;4\pi\xi \,(R+2\pi^2\alpha)+o(1) \qquad\textrm{as}\qquad R\to +\infty\,.
\end{equation}
The latter is the so-called \emph{Ter-Martyrosyan--Skornyakov condition}, originally identified by Ter-Martyrosyan and Skornyakov \cite{TMS-1956}, and is in practice the Fourier counterpart of \eqref{eq:BPcontact}. One can show that imposing the Ter-Martyrosyan--Skornyakov condition at given $\alpha$ to the functions in the domain of the adjoint of $-\Delta|_{C^\infty_0(\mathbb{R}^3\setminus\{0\})}$ selects precisely $\mathcal{D}(-\Delta_\alpha)$.
The action of $-\Delta_\alpha$ in Fourier transform reads
\begin{equation}
\widehat{(-\Delta_\alpha g)}(p)\;=\;p^2 \widehat{g}(p)-\xi\;=\;p^2 \widehat{g}(p)-\lim_{R\to+\infty}\frac{1}{4\pi R}\int_{\substack{\,p\in\mathbb{R}^3 \\ \! |p|<R}}\,{\widehat g(p) \,\ud p}\,,
\end{equation}
which is the Fourier counterpart of \eqref{eq:renorm}.

Arbitrarily close to each $-\Delta_\alpha$, in the sense of resolvents, one finds an ordinary Schr\"{o}dinger operator $-\Delta+V$, with a potential $V$ suitably peaked and shrunk around $x=0$. More precisely, it can be proved that if $V:\mathbb{R}^3\to\mathbb{R}$ is measurable, $\langle x\rangle V\in L^1(\mathbb{R}^3)$, $V\in$ Rollnik, and $-\Delta+V$ is \emph{zero-energy resonant}, then setting
\begin{equation}
 \qquad V_\varepsilon(x)\;:=\;\frac{\eta(\varepsilon)}{\varepsilon^{2}}\,V(x/\varepsilon)\,,\qquad\eta\textrm{ smooth and }\eta(0)=1\,,
\end{equation}
one has
\begin{equation}\label{eq:NRlimit}
 (-\Delta+V_\varepsilon+\lambda\mathbbm{1})^{-1}\;\xrightarrow[]{\quad\varepsilon\downarrow 0\quad}\;(-\Delta_\alpha+\lambda\mathbbm{1})^{-1}
\end{equation}
in the norm operator sense,
for a value of $\alpha$ uniquely determined by the chosen $V$ and $\eta$. (\emph{Without} the zero-energy resonance the limit is the resolvent of the free (negative) Laplacian.)
Explicitly, the zero-energy resonance condition and the other assumptions above on $V$ imply the existence of $\phi\in L^2(\mathbb{R}^3)$ with
\[
\mathrm{sgn}(V)\sqrt{|V|}(-\Delta)^{-1}\sqrt{|V|}\,\phi\;=\;-\phi
\]
and
\[
\psi\;:=\;(-\Delta)^{-1}\sqrt{|V|}\,\phi\;\in\;L^2_\mathrm{loc}(\mathbb{R}^3)\!\setminus\! L^2(\mathbb{R}^3)\,,\qquad (-\Delta+V)\psi=0\quad\mathrm{in}\quad\mathcal{D}'(\mathbb{R}^3)\,,
\]
and in terms of such a resonance function the constant $\alpha$ emerging in the limit \eqref{eq:NRlimit} is given by
\[
\alpha\;=\;-\eta'(0)\,\Big|\int_{\mathbb{R}^3}V\psi\,\ud x\Big|^{-2}\;=\;-\eta'(0)\,\Big|\int_{\mathbb{R}^3}\sqrt{|V|}\,\phi\,\ud x\Big|^{-2}\,.
\]

The resolvent limit \eqref{eq:NRlimit} is intimately connected with the following resolvent identity:
\begin{equation}\label{eq:res_formula}
(-\Delta_\alpha+\lambda\mathbbm{1})^{-1}\;=\;(-\Delta+\lambda\mathbbm{1})^{-1}+\frac{1}{\alpha+\frac{\sqrt{\lambda}\,}{4\pi}}\,|G_\lambda\rangle\langle\overline{G_\lambda}|\,.
\end{equation}
It says that the resolvent of $-\Delta_\alpha$ is a rank-one perturbation of the free resolvent.

Furthermore, the following equivalent characterisation of $-\Delta_\alpha$ has the virtue of showing explicitly that the two operators $-\Delta_\alpha$ and $-\Delta$ only differ on the subspace of spherically symmetric functions.
The canonical decomposition
\begin{equation}\label{eq:L2_ell_decomposition}
L^2(\mathbb{R}^3)\;\cong\;\bigoplus_{\ell=0}^\infty L^2(\mathbb{R}^+,r^2\,\ud r)\otimes\mathrm{span}\{Y_{\ell,-\ell},\dots,Y_{\ell,\ell}\}\;\equiv\;\bigoplus_{\ell=0}^\infty L^2_\ell(\mathbb{R}^3)
\end{equation}
(where the $Y_{\ell,m}$'s are the spherical harmonics on $\mathbb{S}^2$) reduces $-\Delta_\alpha$ and for each $\ell\geqslant 1$ one has $-\Delta_\alpha|_{L^2_\ell}=-\Delta|_{L^2_\ell}$. On the sector $\ell=0$, namely the Hilbert space
\begin{equation}
L^2_{\ell=0}(\mathbb{R}^3)\;=\;U^{-1} L^2(\mathbb{R}^+\,\ud r)\otimes\mathrm{span}\Big\{\frac{1}{4\pi}\Big\}\,,
\end{equation}
where $U:L^2(\mathbb{R}^+,r^2\,\ud r)\xrightarrow[]{\cong}L^2(\mathbb{R}^+,\ud r)$, $(Uf)(r)=rf(r)$, one has
\begin{equation}
-\Delta_\alpha|_{L^2_{\ell=0}}\;=\;(U^{-1}h_{0,\alpha}\,U)\otimes\mathbbm{1}\,,
\end{equation}
and $h_{0,\alpha}$ is self-adjoint on $L^2(\mathbb{R}^+\,\ud r)$ with
\begin{equation}
\begin{split}
h_{0,\alpha}\;&=\;-\frac{\ud^2}{\ud r^2} \\
\mathcal{D}(h_{0,\alpha})\;&=\;\left\{g\in L^2(0,+\infty)\left|\!
\begin{array}{c}
g,g'\in AC_{\mathrm{loc}}((0,+\infty)) \\ g''\in L^2((0,+\infty)) \\
-4\pi\alpha \,g(0^+)+g'(0^+)=0
\end{array}\right.\!\!\right\}.
\end{split}
\end{equation}

From the above characterisation of $-\Delta_\alpha$ it is possible to deduce the spectral properties
\begin{equation}
\sigma_{\mathrm{ess}}(-\Delta_\alpha)\;=\;\sigma_{\mathrm{ac}}(-\Delta_\alpha)\;=\;[0,+\infty)\,,\qquad \sigma_{\mathrm{sc}}(-\Delta_\alpha)\;=\;\emptyset\,,
\end{equation}
and
\begin{equation}
\sigma_{\mathrm{p}}(-\Delta_\alpha)\;=\;
\begin{cases}
\qquad \emptyset & \textrm{if }\alpha\in[0,+\infty] \\
\{-(4\pi\alpha)^2\} & \textrm{if }\alpha\in(-\infty,0)\,.
\end{cases}
\end{equation}
The negative eigenvalue $-(4\pi\alpha)^2$, when it exists, is simple and the corresponding eigenfunction is $|x|^{-1}e^{4\pi\alpha|x|}$.

Last, we come to the quadratic form of the operator $-\Delta_\alpha$.
For each fixed $\lambda>0$ the form domain is the space
\begin{equation}\label{eq:form_dom}
\begin{split}
\mathcal{D}[-\Delta_\alpha]\;&=\;H^1(\mathbb{R}^3)\dotplus\mathrm{span}\{ G_\lambda\} \\
&=\;\Big\{g=f+c\,G_\lambda\,\Big|\,f\in H^1(\mathbb{R}^3)\,,\;c\in\mathbb{C}\Big\}
\end{split}
\end{equation}
and the quadratic form is given by
\begin{equation}\label{eq:form_action}
\begin{split}
(-\Delta_\alpha)[&f+c\,G_\lambda]+\lambda\|f+c\,G_\lambda\|_2^2 \;= \\
&=\;\|\nabla f\|_2^2+\lambda\|f\|_2^2+\Big(\alpha+\frac{\sqrt{\lambda}\,}{4\pi}\Big)\,|c|^2\,.
\end{split}
\end{equation}
Analogously to the operator domain, also for the functions in the form domain the highest local singularity is $|x|^{-1}$, since $G_\lambda\;\in\;H^{\frac{1}{2}-}(\mathbb{R}^3)$ and $f_\lambda\in H^1(\mathbb{R}^3)$. Instead, as typical when passing from the domain of a self-adjoint operator to its (larger) form domain, the characteristic boundary condition of $\mathcal{D}(-\Delta_\alpha)$ is lost in $\mathcal{D}[-\Delta_\alpha]$ and no constraint between regular and singular component remains (actually functions in $\mathcal{D}(-\Delta_\alpha)$ are not necessarily continuous).


\section{The fractional singular Laplacian $(-\Delta_\alpha)^{s/2}$: main results}\label{sec:main_results}

For $\alpha\geqslant 0$, the singular perturbed Laplacian $-\Delta_\alpha$ is a positive self-adjoint operator on $L^2(\mathbb{R}^3)$ and the spectral theorem provides an unambiguous definition of its fractional powers $(-\Delta_\alpha)^{s/2}$. Special cases are $s=0$, yielding the identity operator on $L^2(\mathbb{R}^3)$, and $s=2$, yielding the operator $-\Delta_\alpha$ itself, whereas $s=1$ (the square root) corresponds to an operator whose domain is the form domain of $-\Delta_\alpha$.

For general $s\in(0,2)$ we are able to provide the following amount of information.

Our first result concerns the `\emph{fractional domains}', namely the domains of the fractional powers of $-\Delta_\alpha$. We find that for small $s$ the fractional domain is the Sobolev space of order $s$, whereas when $s>\frac{1}{2}$ for each element of $\mathcal{D}((-\Delta_\alpha)^{s/2})$ we retrieve a notion of a \emph{regular part} in $H^s(\mathbb{R}^3)$ and a \emph{singular part} proportional to the Green's function $G_\lambda$, thus carrying a local $|x|^{-1}$ singularity. This is in complete analogy to what happens with the operator domain $\mathcal{D}(-\Delta_\alpha)$ and the form domain $\mathcal{D}[-\Delta_\alpha]$ -- see \eqref{eq:op_dom} and \eqref{eq:form_dom} above. In particular, when $s>\frac{3}{2}$ the singular part is also continuous, and its evaluation at $x=0$ provides the proportionality constant in front of the singular part, the very same kind of boundary condition displayed by the elements of $\mathcal{D}(-\Delta_\alpha)$.

\begin{theorem}\label{thm:domain_s}
Let $\alpha\geqslant 0$, $\lambda>0$, and $s\in(0,2)$. The following holds.
\begin{itemize}
 \item[(i)] If $s\in(0,\frac{1}{2})$, then
 \begin{equation}\label{eq:Ds_s0-1/2}
  \mathcal{D}((-\Delta_\alpha)^{s/2})\;=\;H^s(\mathbb{R}^3)\,.
 \end{equation}
 \item[(ii)] If $s\in(\frac{1}{2},\frac{3}{2})$, then
 \begin{equation}\label{eq:Ds_s1/2-3/2}
 \mathcal{D}((-\Delta_\alpha)^{s/2})\;=\;H^s(\mathbb{R}^3)\dotplus\mathrm{span}\{G_\lambda\}\,,
 \end{equation}
 where $G_\lambda$ is the function \eqref{eq:defGlambda}.
 \item[(iii)] If $s\in(\frac{3}{2},2)$, then
 \begin{equation}\label{eq:Ds_s3/2-2}
 \begin{split}
 \mathcal{D}&((-\Delta_\alpha)^{s/2})\;= \\
 &=\;\Big\{g\in L^2(\mathbb{R}^3)\,\Big|\,g=F_\lambda+\frac{F_\lambda(0)}{\,\alpha+\frac{\sqrt{\lambda}}{4\pi}\,}\,G_\lambda\textrm{ with }F_\lambda\in H^s(\mathbb{R}^3)\Big\}.
 \end{split}
 \end{equation}
\end{itemize}
\end{theorem}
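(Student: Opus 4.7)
The plan is to combine the Balakrishnan integral representation of fractional powers with the resolvent identity \eqref{eq:res_formula}. For $s\in(0,2)$ one has
\[
(-\Delta_\alpha + \lambda)^{-s/2} \;=\; \frac{\sin(\pi s/2)}{\pi} \int_0^\infty \mu^{-s/2}\,(-\Delta_\alpha + \lambda + \mu)^{-1}\, \mathrm{d}\mu,
\]
and inserting \eqref{eq:res_formula} inside the integrand produces the additive splitting $(-\Delta_\alpha + \lambda)^{-s/2} = (-\Delta + \lambda)^{-s/2} + \Phi_{s,\alpha,\lambda}$, where
\[
\Phi_{s,\alpha,\lambda}\,h \;=\; \frac{\sin(\pi s/2)}{\pi}\int_0^\infty \frac{\mu^{-s/2}}{\alpha + \sqrt{\lambda+\mu}/(4\pi)}\,\langle G_{\lambda+\mu},h\rangle\, G_{\lambda+\mu}\, \mathrm{d}\mu.
\]
Any $g\in\mathcal{D}((-\Delta_\alpha)^{s/2})$ is then $g = F + \Phi_{s,\alpha,\lambda}h$ with $F = (-\Delta+\lambda)^{-s/2}h \in H^s(\mathbb{R}^3)$ and $h=(-\Delta_\alpha+\lambda)^{s/2}g\in L^2(\mathbb{R}^3)$.

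The next step is to rearrange $\Phi_{s,\alpha,\lambda}h$ into a multiple of $G_\lambda$ plus an $H^s$-remainder by writing $G_{\lambda+\mu}=G_\lambda+(G_{\lambda+\mu}-G_\lambda)$ inside the integrand. This yields the decomposition $g = F_\lambda + c_h\,G_\lambda$ with
\[
c_h \;=\; \frac{\sin(\pi s/2)}{\pi}\int_0^\infty \frac{\mu^{-s/2}\,\langle G_{\lambda+\mu},h\rangle}{\alpha + \sqrt{\lambda+\mu}/(4\pi)}\, \mathrm{d}\mu,
\]
and a remainder $r_h$ which, added to $F$, produces $F_\lambda\in H^s(\mathbb{R}^3)$. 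That $r_h\in H^s(\mathbb{R}^3)$ is the analytical heart of the argument: in Fourier variables, $\widehat{G_{\lambda+\mu}-G_\lambda}(p)=-(2\pi)^{-3/2}\,\mu\,(p^2+\lambda)^{-1}(p^2+\lambda+\mu)^{-1}$ carries an extra $\mu$ factor that, weighted by $(1+|p|^2)^{s/2}$, renders the $\mu$-integral convergent for $s<2$; the resulting kernel bound then reduces to a Schur-test estimate of the type recorded in the Appendix.

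With the canonical decomposition $g = F_\lambda + c_h\,G_\lambda$ in hand, the case distinction follows from Sobolev-regularity considerations on $G_\lambda$ alone. If $s<\tfrac{1}{2}$, then $G_\lambda\in H^{1/2-}(\mathbb{R}^3)\subset H^s(\mathbb{R}^3)$, so the singular summand is already in $H^s$ and $\mathcal{D}((-\Delta_\alpha)^{s/2})\subseteq H^s(\mathbb{R}^3)$; the reverse inclusion comes from verifying that $(-\Delta_\alpha+\lambda)^{s/2}f\in L^2(\mathbb{R}^3)$ for every $f\in H^s(\mathbb{R}^3)$, essentially by reversing the above Balakrishnan computation and using density of $H^1(\mathbb{R}^3)\dotplus\mathrm{span}\{G_\lambda\}$ in $H^s(\mathbb{R}^3)$. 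If $\tfrac{1}{2}<s<\tfrac{3}{2}$, then $G_\lambda\notin H^s(\mathbb{R}^3)$, so $H^s(\mathbb{R}^3)\cap\mathrm{span}\{G_\lambda\}=\{0\}$ and the sum is direct, yielding \eqref{eq:Ds_s1/2-3/2}. If $\tfrac{3}{2}<s<2$, then $F_\lambda$ is continuous at the origin, and one must further establish the constraint $c_h = F_\lambda(0)/(\alpha+\sqrt{\lambda}/(4\pi))$; this is obtained by evaluating $F_\lambda(0)$ via Fourier inversion and rearranging the $\mu$-integral defining $c_h$ so as to recognise precisely the factor $(\alpha+\sqrt{\lambda}/(4\pi))^{-1}\,F_\lambda(0)$, in complete analogy with the derivation of \eqref{eq:op_dom}.

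The main obstacle I anticipate is the $H^s$-bound on the remainder $r_h$: its integrand has competing singularities at $\mu=0$ (from $\mu^{-s/2}$) and at $\mu=\infty$ (from the slow $\mu^{-1/2}$ decay of $1/(\alpha+\sqrt{\lambda+\mu}/(4\pi))$), and tracking the Fourier weight $(1+|p|^2)^{s/2}$ through the $\mu$-integral accounts precisely for the three thresholds $s=\tfrac{1}{2},\tfrac{3}{2}$ that delineate the three regimes. A secondary delicate point is the rigorous identification of the boundary condition in case (iii), where one must justify the pointwise evaluation $F_\lambda(0)$ and the commutation of pointwise and integral limits, paralleling the classical derivation of \eqref{eq:op_dom} but at the fractional level.
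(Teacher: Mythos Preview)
Your overall strategy---Balakrishnan representation plus the resolvent identity \eqref{eq:res_formula} to produce the additive splitting $(-\Delta_\alpha+\lambda)^{-s/2}=(-\Delta+\lambda)^{-s/2}+\Phi_{s,\alpha,\lambda}$---is exactly the route the paper takes (Proposition~\ref{prop:canonical_decomp}), and the subsequent idea of writing $G_{\lambda+\mu}=G_\lambda+(G_{\lambda+\mu}-G_\lambda)$ to extract the singular part is also the paper's mechanism (Proposition~\ref{lem:w_in_Hs}). So the architecture is right.

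There is, however, a genuine gap in your treatment of case (i). The coefficient
\[
c_h\;=\;\frac{\sin(\pi s/2)}{\pi}\int_0^\infty\frac{\mu^{-s/2}\,\langle G_{\lambda+\mu},h\rangle}{\alpha+\sqrt{\lambda+\mu}/(4\pi)}\,\mathrm{d}\mu
\]
is \emph{not} finite for generic $h\in L^2(\mathbb{R}^3)$ when $s\leqslant\tfrac12$. For instance, if $\widehat h(p)=|p|^{-3/2-\varepsilon}\mathbf{1}_{\{|p|>1\}}$ with $0<\varepsilon\leqslant\tfrac12-s$, then $\langle G_{\lambda+\mu},h\rangle\sim\mu^{-1/4-\varepsilon/2}$ as $\mu\to\infty$, and the integrand decays no faster than $\mu^{-s/2-3/4-\varepsilon/2}$, which is not integrable at infinity. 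Thus the splitting $\Phi_{s,\alpha,\lambda} h=c_h\,G_\lambda+r_h$ is ill-defined in this regime, and you cannot conclude $\Phi_{s,\alpha,\lambda} h\in H^s$ from ``$G_\lambda\in H^s$ plus $r_h\in H^s$''. The paper handles $s\in(0,\tfrac12)$ by a direct $H^s$-estimate on the full rank-one integral $\Phi_{s,\alpha,\lambda} h$ \emph{without} extracting $G_\lambda$ (Proposition~\ref{pro:h_in_Hs}); that Schur bound is genuinely different from the one controlling $r_h$ and is not covered by your outline.

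A secondary gap concerns the reverse inclusions, which you only sketch. In case (ii) one must show separately that $H^s\subset\mathcal D((-\Delta_\alpha)^{s/2})$ and that $G_\lambda\in\mathcal D((-\Delta_\alpha)^{s/2})$ (Propositions~\ref{prop:Hs_in_domDalph} and~\ref{prop:eq:Dalph_on_Glam} in the paper). In case (iii), beyond the constraint $c_h=F_\lambda(0)/(\alpha+\sqrt\lambda/(4\pi))$, you also need the map $h\mapsto F_\lambda$ to be \emph{onto} $H^s(\mathbb R^3)$---otherwise you only obtain $\mathcal D((-\Delta_\alpha)^{s/2})$ contained in the right-hand side of \eqref{eq:Ds_s3/2-2}, not equality. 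The paper establishes this surjectivity by showing that the remainder map has operator norm strictly less than $1$ on $H^s$ precisely when $s>\tfrac32$ (the bound \eqref{eq:Hsl_norm_w} and Proposition~\ref{prop:RsQs}(v)), a quantitative ingredient your proposal does not anticipate.
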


Separating the three regimes above, two different transitions occur. When $s$ decreases from larger values, the first transition arises at $s=\frac{3}{2}$, namely the level of $H^s$-regularity at which continuity is lost. Correspondingly, the elements in $\mathcal{D}((-\Delta_\alpha)^{3/4})$ still decompose into a regular $H^{\frac{3}{2}}$-part plus a multiple of $G_\lambda$ (singular part), and the decomposition is still of the form $F_\lambda+c_{F_\lambda}G_\lambda$, except that now $F_\lambda$ cannot be arbitrary in $H^{\frac{3}{2}}(\mathbb{R}^3)$: indeed, $F_\lambda$ has additional properties, among which the fact that its Fourier transform is integrable (a fact that is false for generic $H^{\frac{3}{2}}$-functions), and for such $F_\lambda$'s the constant $c_{F_\lambda}$ has a form that is completely analogous to the constant in \eqref{eq:Ds_s3/2-2}, that is,
\[
c_{F_\lambda}\;=\;\frac{1}{\,\alpha+\frac{\sqrt{\lambda}}{4\pi}\,}\,\frac{1}{\,(2\pi)^{\frac{3}{2}}}\int_{\mathbb{R}^3}\!\ud p\,\widehat{F_\lambda}(p)
\]
(see \eqref{eq:Ds_for_s32_Prop} below).
Then, for $s<\frac{3}{2}$, the link between the two components disappears completely.

Decreasing $s$ further, the next transition occurs at $s=\frac{1}{2}$, namely the level of $H^s$-regularity below which the Green's function itself belongs to $H^s(\mathbb{R}^3)$ and it does not necessarily carry the leading singularity any longer. At the transition $s=\frac{1}{2}$, the elements in $\mathcal{D}((-\Delta_\alpha)^{1/4})$ still exhibit a decomposition into a regular $H^{\frac{1}{2}}$-part plus a more singular ${H^{\frac{1}{2}}}^-$-part, except that ${H^{\frac{1}{2}}}^-$-singularity is not explicitly expressed in terms of the Green's function $G_\lambda$. Then, for $s<\frac{1}{2}$, only $H^s$-functions form the fractional domain.

We shall discuss these transition points in Propositions \ref{prop:domain_s_12} and \ref{prop:domain_s_32}.

%
%

Our next result concerns the `singular' Sobolev norm induced by each fractional power $(-\Delta_\alpha)^{s/2}$ on its domain, in comparison with the corresponding ordinary Sobolev norm of the same order. Recall that  $(-\Delta_\alpha+\lambda\mathbbm{1})^{s/2}\geqslant \mathcal \lambda^{s/2}\mathbbm{1}$ and hence $g\mapsto\|(-\Delta_\alpha+\lambda\mathbbm{1})^{s/2}g\|_2$ defines a norm on $\mathcal{D}((-\Delta_\alpha)^{s/2})$, with respect to which the fractional domain is complete.

\begin{theorem}\label{thm:equiv_of_norms}
Let $\alpha\geqslant 0$, $\lambda>0$, and $s\in(0,2)$. Denote by $H^s_\alpha(\mathbb{R}^3)$, the `singular Sobolev space' of fractional order $s$, the Hilbert space $\mathcal{D}((-\Delta_\alpha)^{s/2})$ equipped with the `fractional singular Sobolev norm'
\begin{equation}
\|g\|_{H^s_\alpha}\;:=\;\|(-\Delta_\alpha+\lambda\mathbbm{1})^{s/2}g\|_2\,,\qquad g\in\mathcal{D}(-\Delta_\alpha)^{s/2}\,.
\end{equation}
The following holds.
\begin{itemize}
 \item[(i)] If $s\in(0,\frac{1}{2})$, then
 \begin{equation}\label{eq:equiv_of_norms_s012}
  \|g\|_{H^s_\alpha}\;\approx\;\|g\|_{H^s}\qquad \forall g\in\mathcal{D}(-\Delta_\alpha)^{s/2}=H^s(\mathbb{R}^3)
 \end{equation}
 in the sense of equivalence of norms. The constant in \eqref{eq:equiv_of_norms_s012} is bounded, and bounded away from zero, uniformly in $\alpha$.
 \item[(ii)] If $s\in(\frac{1}{2},\frac{3}{2})$ and $g=F+c\,G_\lambda$ is a generic element in  $H^s_\alpha(\mathbb{R}^3)$ according to the decomposition \eqref{eq:Ds_s1/2-3/2}, then
 \begin{equation}\label{eq:equiv_of_norms_s1232}
 \|F+c\,G_\lambda\|_{H^s_\alpha}\;\approx\;\|F\|_{H^s}+(1+\alpha)|c|\,.
 \end{equation}
 \item[(iii)] If $s\in(\frac{3}{2},2)$ and $g=F_\lambda+\frac{F_\lambda(0)}{\,\alpha+\frac{\sqrt{\lambda}}{4\pi}\,}\,G_\lambda$ is a generic element in  $H^s_\alpha(\mathbb{R}^3)$ according to the decomposition \eqref{eq:Ds_s3/2-2}, then
 \begin{equation}\label{eq:equiv_of_norms_s322}
 \big\|F_\lambda+{\textstyle\frac{F_\lambda(0)}{\,\alpha+\frac{\sqrt{\lambda}}{4\pi}\,}}\,G_\lambda\big\|_{H^s_\alpha}\;\approx\;\|F_\lambda\|_{H^s}\,.
 \end{equation}
 The constant in \eqref{eq:equiv_of_norms_s322} is bounded, and bounded away from zero, uniformly in $\alpha$.
\end{itemize}
\end{theorem}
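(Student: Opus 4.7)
The plan is to unify the three cases through the Balakrishnan-type integral representation of fractional powers combined with the rank-one structure of the resolvent identity \eqref{eq:res_formula}. Starting from $\lambda^{s/2}=\frac{\sin(\pi s/2)}{\pi}\int_0^{+\infty}\mu^{s/2-1}\frac{\lambda}{\lambda+\mu}\,\mathrm{d}\mu$, valid for $s\in(0,2)$, one derives by subtraction the operator identity
\[
(-\Delta_\alpha+\lambda)^{s/2}-(-\Delta+\lambda)^{s/2}\;=\;-\frac{\sin(\pi s/2)}{\pi}\int_0^{+\infty}\!\frac{\mu^{s/2}\,|G_{\lambda+\mu}\rangle\langle\overline{G_{\lambda+\mu}}|}{\alpha+\frac{\sqrt{\lambda+\mu}}{4\pi}}\,\mathrm{d}\mu\,.
\]
Using $\|G_{\lambda+\mu}\|_2\sim(\lambda+\mu)^{-1/4}$ together with the Schur-type bounds of the Appendix, one shows that this difference extends to a bounded operator on $L^2(\mathbb{R}^3)$, uniformly in $\alpha\geqslant 0$, for every $s\in(0,2)$. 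This comparison formula is the common engine for the three parts.

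For part (i), since $\mathcal{D}((-\Delta_\alpha)^{s/2})=H^s(\mathbb{R}^3)$, the equivalence $\|g\|_{H^s_\alpha}\approx\|g\|_{H^s}$ follows directly by applying the above identity to $g\in H^s$: the unperturbed contribution $\|(-\Delta+\lambda)^{s/2}g\|_2$ is equivalent to $\|g\|_{H^s}$, while the correction term is dominated by $\|g\|_2\lesssim\|g\|_{H^s}$; the factor $(\alpha+\sqrt{\lambda+\mu}/4\pi)^{-1}$ is monotonically controlled by its $\alpha=0$ value, which yields uniformity in $\alpha$ in both directions.

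For parts (ii) and (iii), I would treat the regular and singular components of the decomposition in Theorem \ref{thm:domain_s} separately. On the $H^s$-part, part (i) supplies the control by $\|F\|_{H^s}$. For the singular part, reduction to the $\ell=0$ sector via \eqref{eq:L2_ell_decomposition} turns $(-\Delta_\alpha+\lambda)^{s/2}G_\lambda$ into $(h_{0,\alpha}+\lambda)^{s/2}\varphi_\lambda$, with $\varphi_\lambda(r)=(4\pi)^{-1/2}e^{-\sqrt{\lambda}\,r}$; the generalised Fourier transform adapted to $h_{0,\alpha}$ has a spectral density depending explicitly on $\alpha$ (Robin boundary at $r=0$), and an elementary computation, valid throughout $s\in(\frac{1}{2},\frac{3}{2})$, produces $\|(h_{0,\alpha}+\lambda)^{s/2}\varphi_\lambda\|_2\approx 1+\alpha$. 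This accounts for the $(1+\alpha)|c|$ factor in case (ii). In case (iii), combining the same two estimates with Sobolev embedding $H^s\hookrightarrow C^0$ for $s>\frac{3}{2}$ yields $|c|=|F_\lambda(0)|/(\alpha+\frac{\sqrt{\lambda}}{4\pi})\lesssim(1+\alpha)^{-1}\|F_\lambda\|_{H^s}$, so that $|c|\cdot\|G_\lambda\|_{H^s_\alpha}\lesssim\|F_\lambda\|_{H^s}$ uniformly in $\alpha\geqslant 0$, giving the upper bound.

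The main obstacle, I expect, is the matching lower bound in case (iii) and the separate extraction of $F$ and $c$ in case (ii): a naive triangle-inequality split would lose uniformity in $\alpha$, and one must argue that the regular and singular components of the canonical decomposition do not destructively interfere in the $H^s_\alpha$-norm. I would address this via the fractional-map machinery of Section \ref{sec:fractional_maps}, which recasts the constraint $c=F_\lambda(0)/(\alpha+\sqrt{\lambda}/4\pi)$ in operator-theoretic form and lets one compare $(-\Delta_\alpha+\lambda)^{s/2}g$ with $(-\Delta+\lambda)^{s/2}F_\lambda$ via a bounded invertible map whose bounds are uniform in $\alpha$. The fact that $G_\lambda$ carries a local $|x|^{-1}$-singularity, unreachable by any $H^s$-function with $s\geqslant\frac{1}{2}$, is the underlying reason why the projections onto the two summands are continuous in the relevant norms, and should be exploited through a quantitative trace-type argument at the origin to close the reverse bound in both (ii) and (iii).
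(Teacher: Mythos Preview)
Your central claim --- that the difference $(-\Delta_\alpha+\lambda)^{s/2}-(-\Delta+\lambda)^{s/2}$ is bounded on $L^2(\mathbb{R}^3)$ --- is false. The correction $I_g$ only satisfies $\|I_g\|_2\lesssim\|g\|_{H^s}$ (this is the content of Proposition~\ref{prop:Hs_in_domDalph}(i), valid for $s<\frac{3}{2}$), not $\|I_g\|_2\lesssim\|g\|_2$: testing on a normalised $g$ with $\widehat{g}\sim\mathbf{1}_{\{|p|\in[N,2N]\}}$ gives $\|I_g\|_2\approx N^s$. The $H^s\to L^2$ bound yields the upper half of~(i), but not the lower half: from $\|g\|_{H^s_\alpha}\geqslant\|(-\Delta+\lambda)^{s/2}g\|_2-\|I_g\|_2$ you cannot absorb $\|I_g\|_2$ into $\|g\|_{H^s_\alpha}$. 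The paper avoids this with the \emph{backward} decomposition of Proposition~\ref{prop:canonical_decomp}: setting $f_g:=(-\Delta+\lambda)^{-s/2}(-\Delta_\alpha+\lambda)^{s/2}g$ gives the exact identity $\|g\|_{H^s_\alpha}\approx\|f_g\|_{H^s}$, after which $\|h_g\|_{H^s}\lesssim\|f_g\|_{H^s}$ (Proposition~\ref{pro:h_in_Hs}) delivers $\|g\|_{H^s}\leqslant\|f_g\|_{H^s}+\|h_g\|_{H^s}\lesssim\|g\|_{H^s_\alpha}$, uniformly in $\alpha$.

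Your upper bound for part~(iii) has a more structural gap: for $s>\frac{3}{2}$ neither $F_\lambda$ nor $G_\lambda$ individually lies in $\mathcal{D}((-\Delta_\alpha)^{s/2})$ (Propositions~\ref{prop:eq:Dalph_on_Glam} and~\ref{prop:Hs_in_domDalph}(ii)), so the split $\|F_\lambda+cG_\lambda\|_{H^s_\alpha}\leqslant\|F_\lambda\|_{H^s_\alpha}+|c|\,\|G_\lambda\|_{H^s_\alpha}$ is meaningless --- both terms on the right are infinite, and only their constrained sum is finite. The paper's route, which you gesture at but do not identify, is again through $f_g$: one has $F_\lambda=\mathcal{R}_sf_g$ and $\|g\|_{H^s_\alpha}\approx\|f_g\|_{H^s}$ by construction, so~(iii) reduces to showing that $\mathcal{R}_s=\mathbbm{1}-\mathcal{W}_s$ is a bijection on $H^s$, established by proving $\|\mathcal{W}_s\|<1$ in the $(p^2+\lambda)^{s}\ud p$-weighted $L^2$ norm via the explicit Schur constant~\eqref{eq:Hsl_norm_w}. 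The same backward parametrisation handles the lower bound in~(ii): $(1+\alpha)|c|=(1+\alpha)|\mathcal{Q}_sf_g|\lesssim\|f_g\|_{H^s}$ and $\|F\|_{H^s}=\|\mathcal{R}_sf_g\|_{H^s}\lesssim\|f_g\|_{H^s}$ follow from~\eqref{eq:fractional_maps_are_bounded}, whereas a trace-at-the-origin argument would be circular.
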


It is worth remarking that in the limit $\alpha\to+\infty$ (recall that $\Delta_{\alpha=\infty}$ is the self-adjoint Laplacian on $L^2(\mathbb{R}^3)$) the equivalence of norms \eqref{eq:equiv_of_norms_s1232} tends to be lost, consistently with the fact that the function $G_\lambda$ does not belong to $H^s(\mathbb{R}^3)$. Instead, the norm equivalences \eqref{eq:equiv_of_norms_s012} and \eqref{eq:equiv_of_norms_s322} remain valid in the limit $\alpha\to+\infty$, which is also consistent with the structure of the space $H^s_\alpha(\mathbb{R}^3)$ in those two cases.

Last, we examine the action of $-\Delta_\alpha$ on generic functions of its domain and in particular, when applicable, on the function $G_\lambda$. We prove a computationally useful expression of $(-\Delta_\alpha+\lambda\mathbbm{1})^{s/2}\varphi$ in terms of the classical fractional derivative $(-\Delta+\lambda\mathbbm{1})^{s/2}\varphi$.

\begin{theorem}\label{thm:fract_pow_formulas}
Let $\alpha\geqslant 0$, $\lambda>0$, and $s\in(0,2)$.
\begin{itemize}
 \item[(i)] For each $\varphi\in L^2(\mathbb{R}^3)$ 
one has the distributional identity
\begin{equation}\label{eq:fract_pow_formula}
\begin{split}
 (-\Delta_\alpha+&\lambda\mathbbm{1})^{s/2}\varphi\;= \\
 &=\;(-\Delta+\lambda\mathbbm{1})^{s/2}\varphi-\,4\sin{\textstyle\frac{s \pi}{2}}\int_0^{+\infty}\!\ud t\; \frac{t^{s/2}\,\kappa_\varphi(t)}{\,4\pi\alpha+\sqrt{\lambda+t}\,}\,\frac{\,e^{-\sqrt{\lambda+t}\,|x|}}{4\pi|x|}\,,
\end{split}
\end{equation}
where
\begin{equation}\label{eq:kappa_phi}
\kappa_\varphi(t)\;:=\;\int_{\mathbb{R}^3}\ud y\,\frac{\,e^{-\sqrt{\lambda+t}\,|y|}}{4\pi|y|}\,\varphi(y)\,.
\end{equation}
 When $\varphi\in\mathcal{D}((-\Delta_\alpha)^{s/2})\cap H^s(\mathbb{R}^3)$ \eqref{eq:fract_pow_formula} is understood as an identity between $L^2$-functions, whereas when $\varphi\in\mathcal{D}((-\Delta_\alpha)^{s/2})\!\setminus\! H^s(\mathbb{R}^3)$ the r.h.s.~in the $L^2$-identity \eqref{eq:fract_pow_formula} is understood as the difference of two distributional contributions.
 \item[(ii)] The function $G_\lambda$ defined in \eqref{eq:defGlambda} belongs to $\mathcal{D}((-\Delta_\alpha)^{s/2})$ if and only if $s\in(0,\frac{3}{2})$, in which case
\begin{equation}\label{eq:gain_of_reg_DalphGlambd-THM}
(-\Delta_\alpha+\lambda\mathbbm{1})^{s/2}G_\lambda\in H^{\sigma-}(\mathbb{R}^3)\,,\quad \sigma:=\min\{{\textstyle\frac{3}{2}-s,\frac{1}{2}}\}\,,\quad s\in(0,{\textstyle\frac{3}{2}})\,.
\end{equation}
 Explicitly,
 \begin{equation}\label{eq:Dalph_on_Glam-THM}
(-\Delta_\alpha+\lambda\mathbbm{1})^{s/2}G_\lambda\;=\;J_\lambda\,,
\end{equation}
where $J_\lambda$ is the $L^2$-function given by
\begin{equation}\label{eq:defJ}
\widehat{J_\lambda}(p)\;:=\;\frac{\sin\frac{s\pi}{2}}{\pi(2\pi)^{\frac{3}{2}}}\int_0^{+\infty}\!\!\ud t\,\frac{t^{\frac{s}{2}-1}\,\phi(t)}{\,p^2+\lambda+t\,}\,,\qquad p\in\mathbb{R}^3\,,
\end{equation}
and
\begin{equation}\label{eq:defJ-phi}
\phi(t)\;:=\;\frac{4\pi\alpha+\sqrt{\lambda}}{\,4\pi\alpha+\sqrt{\lambda+t\,}\,}\,,\qquad t\geqslant 0\,.
\end{equation}
\end{itemize}
\end{theorem}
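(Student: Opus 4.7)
The strategy for part (i) is to combine the Balakrishnan integral representation of fractional powers with the Kre\u{\i}n-type resolvent formula \eqref{eq:res_formula}. For a non-negative self-adjoint operator $A$ and $s\in(0,2)$ one has
\[
A^{s/2}\;=\;\frac{\sin(s\pi/2)}{\pi}\int_0^{+\infty}\!\!t^{s/2-1}A(A+t)^{-1}\,\ud t\;=\;\frac{\sin(s\pi/2)}{\pi}\int_0^{+\infty}\!\!t^{s/2-1}\bigl(\mathbbm 1-t(A+t)^{-1}\bigr)\,\ud t,
\]
valid as a strong integral on $\mathcal{D}(A^{s/2})$. I would apply this to $A=-\Delta_\alpha+\lambda\mathbbm 1$ and to $B=-\Delta+\lambda\mathbbm 1$ and subtract, so that the divergent $\mathbbm 1$-pieces cancel, leaving only the rank-one resolvent difference furnished by \eqref{eq:res_formula} at the shifted parameter $\lambda+t$,
\[
(A+t)^{-1}-(B+t)^{-1}\;=\;\bigl(\alpha+{\textstyle\frac{\sqrt{\lambda+t}}{4\pi}}\bigr)^{-1}|G_{\lambda+t}\rangle\langle \overline{G_{\lambda+t}}|\,.
\]
This pulls the spatial variable $x$ out of the $t$-integral in the form of $G_{\lambda+t}(x)=(4\pi|x|)^{-1}e^{-\sqrt{\lambda+t}\,|x|}$, while the pairing against $\varphi$ produces exactly $\kappa_\varphi(t)$; collecting the constants ($\sin(s\pi/2)/\pi$ times $4\pi/(4\pi\alpha+\sqrt{\lambda+t})$) reconstitutes the prefactor $-4\sin(s\pi/2)$ of \eqref{eq:fract_pow_formula}. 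The distinction between the $L^2$- and distributional-interpretation is then automatic: the left-hand side belongs to $L^2(\mathbb{R}^3)$ by construction of $(-\Delta_\alpha)^{s/2}$, and on the right both summands are simultaneously in $L^2$ precisely when $\varphi\in H^s(\mathbb{R}^3)$.

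For the membership statement in (ii), the three regimes are read off from Theorem \ref{thm:domain_s}: the function $G_\lambda$ lies in $H^{1/2-}(\mathbb{R}^3)$, hence in $H^s\subset\mathcal{D}((-\Delta_\alpha)^{s/2})$ for $s<\tfrac12$ by part (i) of that theorem; it is itself the singular part in the decomposition \eqref{eq:Ds_s1/2-3/2} for $s\in(\tfrac12,\tfrac32)$; for $s\in(\tfrac32,2)$ any representation $G_\lambda=F_\lambda+\frac{F_\lambda(0)}{\alpha+\sqrt\lambda/(4\pi)}G_\lambda$ as in \eqref{eq:Ds_s3/2-2} would give $(1-c_{F_\lambda})G_\lambda=F_\lambda\in H^s$, whence $c_{F_\lambda}=1$ and $F_\lambda\equiv 0$, contradicting $F_\lambda(0)\neq 0$. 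The transition values $s=\tfrac12,\tfrac32$ are covered by Propositions \ref{prop:domain_s_12} and \ref{prop:domain_s_32}.

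To obtain the explicit formula \eqref{eq:Dalph_on_Glam-THM} I specialise \eqref{eq:fract_pow_formula} to $\varphi=G_\lambda$. A one-line partial-fractions computation in Fourier yields
\[
\kappa_{G_\lambda}(t)\;=\;\frac{1}{(2\pi)^3}\int_{\mathbb{R}^3}\frac{\ud p}{(p^2+\lambda)(p^2+\lambda+t)}\;=\;\frac{1}{4\pi(\sqrt\lambda+\sqrt{\lambda+t})}.
\]
Taking the Fourier transform of both sides of \eqref{eq:fract_pow_formula} and invoking the scalar Balakrishnan identity $(p^2+\lambda)^{s/2-1}=\frac{\sin(s\pi/2)}{\pi}\int_0^{+\infty}t^{s/2-1}(p^2+\lambda+t)^{-1}\ud t$, both summands are brought to the common kernel $t^{s/2-1}/(p^2+\lambda+t)$; the bracket that multiplies it collapses, via the algebraic identity $t=(\sqrt{\lambda+t}-\sqrt\lambda)(\sqrt{\lambda+t}+\sqrt\lambda)$, to precisely $\phi(t)$ in \eqref{eq:defJ-phi}, recovering \eqref{eq:defJ}. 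The regularity gain \eqref{eq:gain_of_reg_DalphGlambd-THM} then follows from large-$|p|$ inspection of $\widehat{J_\lambda}$: for $s\in(1,\tfrac32)$ a scaling $t=|p|^2 u$, together with the decay $\phi(t)=O(t^{-1/2})$, yields $|\widehat{J_\lambda}(p)|\lesssim|p|^{s-3}$ and hence $J_\lambda\in H^{3/2-s-}$, while for $s\leqslant 1$ the cruder bound $\phi\leqslant 1$ gives $|\widehat{J_\lambda}(p)|\lesssim|p|^{-2}$ (with a logarithmic correction at $s=1$), and hence $J_\lambda\in H^{1/2-}$.

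The principal technical hurdle is the rigorous handling of the two individually divergent integrals produced by the splitting $A(A+t)^{-1}=\mathbbm 1-t(A+t)^{-1}$ in the Balakrishnan representation of part (i): the cancellation of the $\mathbbm 1$-pieces must occur \emph{before} the $t$-integration is isolated. I would carry this out via a two-sided cut-off $\int_\varepsilon^R$, applying the rank-one subtraction \eqref{eq:res_formula} under the cut-off and only then passing to the limits $\varepsilon\downarrow 0$, $R\uparrow+\infty$, with the resulting integrability of $t^{s/2-1}\cdot\kappa_\varphi(t)\cdot(4\pi\alpha+\sqrt{\lambda+t})^{-1}$ at both endpoints controlled by the Schur-type bounds collected in the Appendix of the paper.
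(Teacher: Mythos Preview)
Your argument for part (i) is essentially the paper's: Balakrishnan representation for both $-\Delta_\alpha+\lambda$ and $-\Delta+\lambda$, subtract, and invoke the rank-one resolvent identity \eqref{eq:res_formula} at the shifted parameter. The paper performs the subtraction by writing $A(A+t)^{-1}$ directly rather than splitting it as $\mathbbm 1-t(A+t)^{-1}$, so no individually divergent pieces ever appear and your cut-off manoeuvre is not needed; otherwise the content is identical. Your derivation of the explicit formula for $J_\lambda$ in (ii) also coincides with the paper's (Proposition \ref{prop:eq:Dalph_on_Glam}): specialise \eqref{eq:fract_pow_formula} to $\varphi=G_\lambda$, compute $\kappa_{G_\lambda}$ via partial fractions, and collapse the bracket to $\phi(t)$.

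The genuine gap is in your treatment of the \emph{membership} statement in (ii). You read off whether $G_\lambda\in\mathcal{D}((-\Delta_\alpha)^{s/2})$ from Theorem \ref{thm:domain_s} and Propositions \ref{prop:domain_s_12}--\ref{prop:domain_s_32}, but in the paper's logical architecture those results are proved \emph{downstream} of Proposition \ref{prop:eq:Dalph_on_Glam}, i.e.\ downstream of the very content of Theorem \ref{thm:fract_pow_formulas}(ii). For instance, the inclusion $\mathrm{span}\{G_\lambda\}\subset\mathcal{D}((-\Delta_\alpha)^{s/2})$ in the proof of Theorem \ref{thm:domain_s}(ii) comes from Proposition \ref{prop:eq:Dalph_on_Glam}; the exclusion $G_\lambda\notin\mathcal{D}((-\Delta_\alpha)^{s/2})$ for $s\geqslant\tfrac32$, used in Proposition \ref{prop:Hs_in_domDalph}(ii) and hence in Theorem \ref{thm:domain_s}(iii), is again Proposition \ref{prop:eq:Dalph_on_Glam}. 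So your route is circular. The paper instead settles the membership directly from the large-$|p|$ asymptotics of $\widehat{J_\lambda}$ (Lemma \ref{lem:Jlambda}): it establishes not just the upper bounds you state but the \emph{sharp} two-sided asymptotics $\widehat{J_\lambda}(p)=\kappa_s\,(p^2+\lambda)^{-(3-s)/2}(1+o(1))$ for $s\in(1,2)$ (and analogous sharp forms for $s\leqslant 1$), from which $J_\lambda\in L^2(\mathbb{R}^3)$ is seen to fail exactly at $s=\tfrac32$. You already have all the ingredients---upgrading your scaling bound $|\widehat{J_\lambda}(p)|\lesssim|p|^{s-3}$ to an asymptotic equivalence (via dominated convergence and the leading term $\phi(t)\sim(4\pi\alpha+\sqrt\lambda)\,t^{-1/2}$) removes the circularity.
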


As mentioned already, the proofs of Theorems \ref{thm:domain_s}, \ref{thm:equiv_of_norms}, and \ref{thm:fract_pow_formulas} are deferred to Section \ref{sec:proofs}, after developing the preparatory material in Sections \ref{sec:canonical-decomposition}-\ref{sec:fractional_maps}; the only exception is the integral formula \eqref{eq:fract_pow_formula}, that for its technical relevance in our discussion will be proved in advance, at the end of Section \ref{sec:canonical-decomposition}.

\section{Canonical decomposition of the domain of $(-\Delta_\alpha)^{s/2}$}\label{sec:canonical-decomposition}

In this Section we present an intermediate technical lemma that is crucial for our analysis and gives a canonical decomposition of the domain of $(-\Delta_\alpha)^{s/2}$ for powers $s\in(0,2)$.

Based on the same argument, we then prove the integral formula \eqref{eq:fract_pow_formula} and hence part (i) of Theorem \ref{thm:fract_pow_formulas}.

\begin{proposition}\label{prop:canonical_decomp}
Fix $\alpha\geqslant 0$ and $\lambda>0$. Let $s\in(0,2)$ and $g\in\mathcal{D}((-\Delta_\alpha)^{s/2})$. Then
\begin{equation}\label{eq:gdecomp}
g\;=\;f_g+h_g
\end{equation}
where $f_g\in H^s(\mathbb{R}^3)$ is given by
\begin{equation}\label{eq:reg_comp_fg}
f_g\;:=\;(-\Delta+\lambda\mathbbm{1})^{-s/2}(-\Delta_\alpha+\lambda\mathbbm{1})^{s/2}g
\end{equation}
and $h_g\in L^2(\mathbb{R}^3)$ is given by
\begin{equation}\label{eq:sing_comp_hg}
\begin{split}
h_g(x)\;:&=\;4\sin\!\!\!\begin{array}{l}\frac{s\pi}{2}\end{array}\!\!\!\int_0^{+\infty}\!\ud t\; \frac{t^{-s/2}\,c_g(t)}{4\pi\alpha+\sqrt{\lambda+t}\,}\,\frac{e^{-\sqrt{\lambda+t}\,|x|}}{4\pi|x|}\,,
\end{split}
\end{equation}
having set
\begin{equation}\label{eq:cgrep}
\begin{split}
c_g(t)\;&:=\;\int_{\mathbb{R}^3}\ud y\;\frac{e^{-\sqrt{\lambda+t}\,|y|}}{4\pi|y|}\,((-\Delta_\alpha+\lambda\mathbbm{1})^{s/2}g)(y)\,.
\end{split}
\end{equation}
When $g$ runs in $\mathcal{D}((-\Delta_\alpha)^{s/2})$ then the corresponding component $f_g$ in the decomposition \eqref{eq:gdecomp} spans the whole $H^s(\mathbb{R}^3)$.
In terms of this decomposition,
\begin{equation}\label{eq:action_Dalpha}
(-\Delta_\alpha+\lambda\mathbbm{1})^{s/2}g\;=\;(-\Delta+\lambda\mathbbm{1})^{s/2}f_g\,.
\end{equation}
\end{proposition}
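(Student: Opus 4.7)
The identity $(-\Delta_\alpha+\lambda\mathbbm 1)^{s/2}g=(-\Delta+\lambda\mathbbm 1)^{s/2}f_g$ is immediate from the definition of $f_g$ in \eqref{eq:reg_comp_fg}, and surjectivity of $g\mapsto f_g$ onto $H^s(\mathbb{R}^3)$ follows from the observation that $(-\Delta_\alpha+\lambda\mathbbm 1)^{s/2}$ is a bijection $\mathcal{D}((-\Delta_\alpha)^{s/2})\to L^2(\mathbb{R}^3)$ while $(-\Delta+\lambda\mathbbm 1)^{-s/2}$ is a bijection $L^2(\mathbb{R}^3)\to H^s(\mathbb{R}^3)$. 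So the entire content of the proposition is the pointwise/$L^2$ identity $g-f_g=h_g$.

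My plan is to use the Balakrishnan integral representation of negative fractional powers. Since $s/2\in(0,1)$ and the two operators $A:=-\Delta_\alpha+\lambda\mathbbm 1$ and $B:=-\Delta+\lambda\mathbbm 1$ are strictly positive on $L^2(\mathbb{R}^3)$, one has
\[
A^{-s/2}\;=\;\frac{\sin\frac{s\pi}{2}}{\pi}\int_0^{+\infty}\!\!t^{-s/2}(A+t\mathbbm 1)^{-1}\ud t,\qquad
B^{-s/2}\;=\;\frac{\sin\frac{s\pi}{2}}{\pi}\int_0^{+\infty}\!\!t^{-s/2}(B+t\mathbbm 1)^{-1}\ud t,
\]
the integrals converging in the strong operator topology. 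Subtracting and invoking the rank-one resolvent formula \eqref{eq:res_formula} (with $\lambda$ replaced by $\lambda+t$) yields
\[
A^{-s/2}-B^{-s/2}\;=\;\frac{\sin\frac{s\pi}{2}}{\pi}\int_0^{+\infty}\!\!\frac{t^{-s/2}}{\,\alpha+\frac{\sqrt{\lambda+t}}{4\pi}\,}\,|G_{\lambda+t}\rangle\langle\overline{G_{\lambda+t}}|\,\ud t,
\]
again as a strongly convergent operator-valued integral (the norm of the integrand is $O(t^{-s/2-1})$ at infinity, using $\|G_{\lambda+t}\|_2^2=(8\pi\sqrt{\lambda+t})^{-1}$ and the analogous decay of the prefactor, and $O(t^{-s/2})$ near $0$).

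Next, I apply this identity to the vector $u:=(-\Delta_\alpha+\lambda\mathbbm 1)^{s/2}g\in L^2(\mathbb{R}^3)$. By construction $A^{-s/2}u=g$ and $B^{-s/2}u=f_g$, so
\[
g-f_g\;=\;\frac{\sin\frac{s\pi}{2}}{\pi}\int_0^{+\infty}\!\!\frac{t^{-s/2}}{\,\alpha+\frac{\sqrt{\lambda+t}}{4\pi}\,}\,G_{\lambda+t}\,\langle\overline{G_{\lambda+t}},u\rangle\,\ud t.
\]
The inner product $\langle\overline{G_{\lambda+t}},u\rangle$ is precisely $c_g(t)$ defined in \eqref{eq:cgrep}, and after rewriting $(\alpha+\sqrt{\lambda+t}/(4\pi))^{-1}=4\pi(4\pi\alpha+\sqrt{\lambda+t})^{-1}$ and absorbing the factor $4\pi$ and the $\pi$ in the denominator of the prefactor, the right-hand side becomes exactly $h_g(x)$ of \eqref{eq:sing_comp_hg}. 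That $f_g$ lies in $H^s(\mathbb{R}^3)$ is obvious from \eqref{eq:reg_comp_fg} because $(-\Delta+\lambda\mathbbm 1)^{-s/2}$ maps $L^2$ isomorphically onto $H^s$.

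The main technical obstacle is the justification of the interchanges: rewriting the integral applied to $u$ as a pointwise integral in $x$ and $y$ requires Fubini on the joint integrand $t^{-s/2}(4\pi\alpha+\sqrt{\lambda+t})^{-1}G_{\lambda+t}(x)G_{\lambda+t}(y)u(y)$. Absolute integrability of this function in $(t,y)$ for a.e.\ $x$ follows from the decay estimates above together with Cauchy--Schwarz in $y$, and this also justifies identifying the Bochner integral with the pointwise one. The same scheme of proof, using the dual Balakrishnan formula $A^{s/2}-B^{s/2}=\frac{\sin(s\pi/2)}{\pi}\int_0^\infty t^{s/2}\bigl[(B+t)^{-1}-(A+t)^{-1}\bigr]\ud t$ (obtained from the standard one via $A(A+t)^{-1}=1-t(A+t)^{-1}$), yields at once the formula \eqref{eq:fract_pow_formula} of Theorem \ref{thm:fract_pow_formulas}(i) with the appropriate sign and with $t^{s/2}$ in place of $t^{-s/2}$.
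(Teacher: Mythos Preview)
Your proof is correct and follows essentially the same route as the paper: both apply the Balakrishnan integral representation to $(-\Delta_\alpha+\lambda\mathbbm 1)^{-s/2}$, subtract the corresponding formula for $(-\Delta+\lambda\mathbbm 1)^{-s/2}$ via the rank-one resolvent identity \eqref{eq:res_formula}, and apply the result to $u=(-\Delta_\alpha+\lambda\mathbbm 1)^{s/2}g$. The only cosmetic difference is that the paper derives the Balakrishnan formula by writing $A^{-s/2}=(A^{-1})^{s/2}$ and invoking \eqref{eq:x-integral_id} for $x=A^{-1}$, followed by a change of variable $t\mapsto t^{-1}$, whereas you quote the negative-power formula directly; this spares you the substitution but is otherwise the same argument.
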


\begin{proof}
\eqref{eq:action_Dalpha} follows from \eqref{eq:reg_comp_fg}, so the proof consists of showing that \eqref{eq:reg_comp_fg} and \eqref{eq:sing_comp_hg} give \eqref{eq:gdecomp}. Our argument is based on the identity
\begin{equation}\label{eq:Ds-1}
\mathcal{D}((-\Delta_\alpha)^{s/2})\;=\;\mathcal{D}((-\Delta_\alpha+\lambda\mathbbm{1})^{s/2})\;=\;(-\Delta_\alpha+\lambda\mathbbm{1})^{-s/2} L^2(\mathbb{R}^3)\,,
\end{equation}
which follows from the spectral theorem, owing to $-\Delta_\alpha\geqslant\mathbb{O}$,  and on the integral identity
\begin{equation}\label{eq:x-integral_id}
x^{s/2}\;=\;\frac{\sin s\frac{\pi}{2}}{\pi}\int_0^{+\infty}\!\ud t\; t^{s/2-1}\,\frac{x}{t+x}\,,\qquad x\geqslant 0\,,\quad s\in(0,2)\,.
\end{equation}
By the functional calculus of $-\Delta_\alpha$,  \eqref{eq:x-integral_id} gives
\begin{equation*}
\begin{split}
(-\Delta_\alpha+&\lambda\mathbbm{1})^{-s/2}\; \\
& =\;\frac{\sin s\frac{\pi}{2}}{\pi}\int_0^{+\infty}\!\ud t\; t^{s/2-1}\,(-\Delta_\alpha+\lambda\mathbbm{1})^{-1}(t+(-\Delta_\alpha+\lambda\mathbbm{1})^{-1})^{-1} \\
& =\;\frac{\sin s\frac{\pi}{2}}{\pi}\int_0^{+\infty}\!\ud t\; t^{s/2-2}\,(-\Delta_\alpha+(\lambda+t^{-1})\mathbbm{1})^{-1}
\end{split}
\end{equation*}
and by means of the resolvent formula \eqref{eq:res_formula} and of \eqref{eq:x-integral_id} again one finds
\begin{equation}\label{eq:ref_f_1}
\begin{split}
(-\Delta_\alpha&+\lambda\mathbbm{1})^{-s/2}\;=\;\frac{\sin s\frac{\pi}{2}}{\pi}\int_0^{+\infty}\!\ud t\; t^{s/2-2}\,(-\Delta+(\lambda+t^{-1})\mathbbm{1})^{-1} \\
& \qquad+\frac{\sin s\frac{\pi}{2}}{\pi}\int_0^{+\infty}\!\ud t\; t^{s/2-2}\,\Big( \alpha+\frac{\sqrt{\lambda+t^{-1}}\,}{4\pi}\Big)^{\!-1}\,|G_{\lambda+t^{-1}}\rangle\langle\overline{G_{\lambda+t^{-1}}}| \\
&=\;(-\Delta+\lambda\mathbbm{1})^{-s/2} \;+\\
& \qquad +\frac{\sin s\frac{\pi}{2}}{\pi}\int_0^{+\infty}\!\ud t\; t^{s/2-2}\,\Big( \alpha+\frac{\sqrt{\lambda+t^{-1}}\,}{4\pi}\Big)^{\!-1}\,|G_{\lambda+t^{-1}}\rangle\langle\overline{G_{\lambda+t^{-1}}}|\,.
\end{split}
\end{equation}

Let now $g\in \mathcal{D}((-\Delta_\alpha)^{s/2})$: applying the operator identity \eqref{eq:ref_f_1} to the $L^2$-function $(-\Delta_\alpha+\lambda\mathbbm{1})^{s/2}g$ gives $g$ itself in the l.h.s.~and two summands on the r.h.s., the first of which is precisely $f_g$ defined in \eqref{eq:reg_comp_fg}, whereas the second is
\[
\begin{split}
&\frac{\sin s\frac{\pi}{2}}{\pi}\int_0^{+\infty}\!\ud t\; t^{s/2-2}\,\Big( \alpha+\frac{\sqrt{\lambda+t^{-1}}\,}{4\pi}\Big)^{\!-1}\,G_{\lambda+t^{-1}}\,\langle\,\overline{G_{\lambda+t^{-1}}}\,, (-\Delta_\alpha+\lambda\mathbbm{1})^{s/2}g\,\rangle \\
&=\;4\sin{\textstyle\frac{s \pi}{2}}\int_0^{+\infty}\!\ud t\; \frac{t^{-s/2}}{4\pi\alpha+\sqrt{\lambda+t}\,}\:G_{\lambda+t}\,\langle\,\overline{G_{\lambda+t}}\,, (-\Delta_\alpha+\lambda\mathbbm{1})^{s/2}g\,\rangle \;=\;h_g
\end{split}
\]
defined in \eqref{eq:sing_comp_hg}-\eqref{eq:cgrep}. This proves that $h_g=g-f_g\in L^2(\mathbb{R}^3)$ and yields \eqref{eq:gdecomp}.

Not only is $f_g\in H^s(\mathbb{R}^3)$ for given $g\in \mathcal{D}((-\Delta_\alpha)^{s/2})$, but also, conversely, given an arbitrary $f\in H^s(\mathbb{R}^3)$ the function $(-\Delta_\alpha+\lambda\mathbbm{1})^{-s/2}(-\Delta+\lambda\mathbbm{1})^{s/2}f$ clearly belongs to $\mathcal{D}((-\Delta_\alpha)^{s/2})$ and its component $f_g$ is precisely $f$. Thus, $f_g$ does span $H^s(\mathbb{R}^3)$ when $g$ runs in $\mathcal{D}((-\Delta_\alpha)^{s/2})$.
\end{proof}
%
%
%
%
%


\begin{proof}[Proof of Theorem \ref{thm:fract_pow_formulas}(i)]
 We follow the same line of reasoning that has led to Proposition \eqref{prop:canonical_decomp}. By \eqref{eq:x-integral_id} and the functional calculus of $-\Delta_\alpha$,
\begin{equation*}
(-\Delta_\alpha+\lambda\mathbbm{1})^{s/2}\varphi\;=\;\;\frac{\sin s\frac{\pi}{2}}{\pi}\int_0^{+\infty}\!\ud t\; t^{s/2-1}\,(-\Delta_\alpha+\lambda\mathbbm{1})(-\Delta_\alpha+(\lambda+t)\mathbbm{1})^{-1})\varphi\,.
\end{equation*}
Taking the difference between the identity above for generic $\alpha$ and for $\alpha=\infty$ (namely for the operator $-\Delta$ instead of $-\Delta_\alpha$), together with the resolvent formula \eqref{eq:res_formula}, yields
\begin{equation*}
\begin{split}
 &(-\Delta_\alpha+\lambda\mathbbm{1})^{s/2}\varphi-(-\Delta+\lambda\mathbbm{1})^{s/2}\varphi\;= \\
 &=\;-\frac{\sin s\frac{\pi}{2}}{\pi}\int_0^{+\infty}\!\ud t\; t^{s/2}\,\Big((-\Delta_\alpha+(\lambda+t)\mathbbm{1})^{-1})\varphi-(-\Delta+(\lambda+t)\mathbbm{1})^{-1})\varphi\Big) \\
 &=\;-\frac{\sin s\frac{\pi}{2}}{\pi}\int_0^{+\infty}\!\ud t\; t^{s/2}\,\Big(\alpha+\frac{\sqrt{\lambda+t}}{4\pi}\,\Big)^{\!-1}\,\frac{\,e^{-\sqrt{\lambda+t}\,|x|}}{4\pi|x|}\int_{\mathbb{R}^3}\ud y\,\frac{\,e^{-\sqrt{\lambda+t}\,|y|}}{4\pi|y|}\,\varphi(y)\,,
\end{split}
\end{equation*}
which leads to \eqref{eq:fract_pow_formula}, by means of the definition \eqref{eq:kappa_phi}.
\end{proof}

\section{Regularity properties 
}\label{sec:regularity}

In this Section we discuss 
the regularity and asymptotic properties of functions of the form $h_g$ that emerge in the the canonical decomposition of Proposition \ref{prop:canonical_decomp}.

For given $\alpha\geqslant 0$, $\lambda>0$, $s\in(0,2)$, and $f\in H^s(\mathbb{R}^3)$, we define
\begin{equation}\label{eq:cgeneral}
\begin{split}
c_f(t)\;&:=\;\int_{\mathbb{R}^3}\ud y\;\frac{e^{-\sqrt{\lambda+t}\,|y|}}{4\pi|y|}\,((-\Delta+\lambda\mathbbm{1})^{s/2}f)(y)
\end{split}
\end{equation}
and
\begin{equation}\label{eq:hgeneral}
h_f(x)\;:=\; 4\sin\!\!\!\begin{array}{l}\frac{s\pi}{2}\end{array}\!\!\!\int_0^{+\infty}\!\ud t\; \frac{t^{-s/2}\,c(t)}{4\pi\alpha+\sqrt{\lambda+t}\,}\,\frac{e^{-\sqrt{\lambda+t}\,|x|}}{4\pi|x|}\,.
\end{equation}
Equivalently, in Fourier transform,
\begin{equation}\label{eq:cgeneral-Fourier}
c_f(t)\;:=\;\frac{1}{\,(2\pi)^{3/2}}\int_{\mathbb{R}^3}\ud p\:\frac{\:(p^2+\lambda)^{s/2}}{p^2+\lambda+t\,}\:\widehat{f}(p)
\end{equation}
and
\begin{equation}\label{eq:hgeneral-Fourier}
\widehat{h_f}(p)\;:=\;\frac{4\sin\frac{s\pi}{2}}{(2\pi)^{3/2}}\int_0^{+\infty}\!\ud t\; \frac{t^{-s/2}\,c(t)}{4\pi\alpha+\sqrt{\lambda+t}\,}\,\frac{1}{p^2+\lambda+t\,}\,.
\end{equation}

It is also convenient to introduce the function $w_f$ whose Fourier transform is
\begin{equation}\label{eq:w}
\widehat{w_f}(p)\;:=\; -\frac{1}{\,p^2+\lambda\,}\,\frac{4\sin\frac{s\pi}{2}}{(2\pi)^{\frac{3}{2}}}\int_0^{+\infty}\!\ud t\,\frac{t^{1-\frac{s}{2}}\,c(t)}{4\pi\alpha+\sqrt{\lambda+t}\,}\,\frac{1}{\,p^2+\lambda+t\,}\,.
\end{equation}
Formally,
\begin{equation}
h_f\;=\;q_{f}\,G_\lambda+w_f\,
\end{equation}
where $G_\lambda$ is the function \eqref{eq:defGlambda} and
\begin{equation}\label{eq:qs}
q_f\;:=\;4\!\!\begin{array}{l}\sin\frac{s\pi}{2}\end{array}\!\!\!\int_0^{+\infty}\!\ud t\,\frac{t^{-\frac{s}{2}}\,c_f(t)}{4\pi\alpha+\sqrt{\lambda+t}\,}\,.
\end{equation}

\begin{lemma}\label{lem:ct_L2weighted}
For given $\alpha\geqslant 0$, $\lambda>0$, $s\in(0,2)$, and $f\in H^s(\mathbb{R}^3)$, the function $c(t)$ defined in \eqref{eq:cgeneral} is continuous in $t\in[0,+\infty)$ and satisfies the bounds
\begin{equation}\label{eq:ct_Linf}
|c_f(t)|\;\lesssim\;\|f\|_{H^s}(1+t)^{-\frac{1}{4}}
\end{equation}
and
\begin{equation}\label{eq:ct_L2weighted}
\int_0^{+\infty}\!\ud t\,t^{-\frac{1}{2}}|c_f(t)|^2\;\leqslant\; {\textstyle\frac{1}{2}}\,\|(p^2+\lambda)^{\frac{s}{2}}\widehat{f}\|_2^2\;\approx\;\|f\|^2_{H^s}\,.
\end{equation}
\end{lemma}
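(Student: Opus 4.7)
Both bounds rest on the Fourier representation \eqref{eq:cgeneral-Fourier}, which I would rewrite as
\[
c_f(t)\;=\;(2\pi)^{-3/2}\int_{\R^3} g(p)\,(p^2+\lambda+t)^{-1}\,\ud p\,,\qquad g(p)\;:=\;(p^2+\lambda)^{s/2}\widehat f(p)\,,
\]
noting that $\|g\|_2^2\approx\|f\|_{H^s}^2$ since the weights $(p^2+\lambda)^{s/2}$ and $(1+|p|^2)^{s/2}$ are comparable at fixed $\lambda>0$. Continuity of $c_f$ on $[0,+\infty)$ is immediate by dominated convergence: the integrand is continuous in $t$ and, for every $t\geqslant 0$, is majorised by the $t$-independent function $(p^2+\lambda)^{s/2-1}|\widehat f(p)|$, which is integrable by Cauchy--Schwarz together with the fact that $(p^2+\lambda)^{-1}\in L^2(\R^3)$.

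For the pointwise bound \eqref{eq:ct_Linf}, a single Cauchy--Schwarz gives
\[
|c_f(t)|\;\leqslant\;(2\pi)^{-3/2}\Big(\int_{\R^3}\frac{\ud p}{(p^2+\lambda+t)^2}\Big)^{\!1/2}\|g\|_2\,,
\]
and the dilation $p=\sqrt{\lambda+t}\,q$ evaluates the integral in closed form as $\pi^2(\lambda+t)^{-1/2}$; since $\lambda$ is fixed, $(\lambda+t)^{-1/4}\approx(1+t)^{-1/4}$ and \eqref{eq:ct_Linf} follows.

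The substantive step is \eqref{eq:ct_L2weighted}. The plan is to expand $|c_f(t)|^2$ as a double integral over $(p,q)\in\R^3\times\R^3$, use Fubini to perform the $t$-integration first, and close the inner integral via partial fractions and the standard evaluation $\int_0^\infty t^{-1/2}(a+t)^{-1}\ud t=\pi/\sqrt a$, obtaining
\[
\int_0^{+\infty}\!\frac{t^{-1/2}}{(a+t)(b+t)}\,\ud t\;=\;\frac{\pi}{\sqrt{ab}\,(\sqrt a+\sqrt b)}\,,\qquad a,b>0\,.
\]
After this reduction,
\[
\int_0^{+\infty}\!\!\ud t\,t^{-1/2}|c_f(t)|^2\;=\;\frac{\pi}{(2\pi)^3}\iint_{\R^3\times\R^3}\!\! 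K(p,q)\,g(p)\,\overline{g(q)}\,\ud p\,\ud q\,,
\]
with symmetric positive kernel $K(p,q):=\bigl[\sqrt{(p^2+\lambda)(q^2+\lambda)}\,\bigl(\sqrt{p^2+\lambda}+\sqrt{q^2+\lambda}\bigr)\bigr]^{-1}$, so the task reduces to estimating $\|\mathcal K\|_{L^2\to L^2}$. This is the main obstacle and the only genuinely non-routine step: one must select a Schur weight whose scaling is compatible with the non-separable kernel $K$.

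I would apply the Schur test (in the form systematised in the paper's Appendix) with the radial weight $h(p):=(p^2+\lambda)^{-3/4}$, the exponent being forced by dimensional balance between the $\rho^2$ of the spherical measure and the $\rho^{-2}$ decay of $K$ in the second variable. Passing to radial coordinates in $q$, substituting $u=\sqrt{\rho^2+\lambda}$, and estimating $\sqrt{u^2-\lambda}\leqslant u$ collapse the Schur integral to the textbook
\[
\int_0^{+\infty}\!\frac{\ud u}{u^{1/2}(u+P)}\;=\;\frac{\pi}{\sqrt P}\,,\qquad P:=\sqrt{p^2+\lambda}\,,
\]
yielding $\int_{\R^3}K(p,q)\,h(q)\,\ud q\leqslant 4\pi^2\,h(p)$; by the symmetry of $K$ the reverse Schur integral obeys the same bound, so $\|\mathcal K\|\leqslant 4\pi^2$. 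Multiplying by the prefactor $\pi/(2\pi)^3=1/(8\pi^2)$ produces exactly the constant $1/2$ in \eqref{eq:ct_L2weighted}, and the final $\approx\|f\|_{H^s}^2$ is the weight equivalence already mentioned.
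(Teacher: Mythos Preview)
Your proof is correct. The continuity argument and the pointwise bound \eqref{eq:ct_Linf} are essentially the same as in the paper, which writes $c_f(t)=\langle G_{\lambda+t},(-\Delta+\lambda\mathbbm{1})^{s/2}f\rangle$ and uses $\|G_{\lambda+t}\|_2=(8\pi\sqrt{\lambda+t})^{-1/2}$; your explicit dilation computation is the same Cauchy--Schwarz step.

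For the weighted $L^2$-bound \eqref{eq:ct_L2weighted}, however, you take a genuinely different route. The paper substitutes $t\mapsto t^2$, passes to spherical coordinates $p=(\varrho,\omega)$, and reduces the question to the one-dimensional integral operator with kernel $Q(t,\varrho)=\varrho/(\varrho^2+t^2)$ acting on the radial profiles $\eta_\omega(\varrho):=\varrho(\varrho^2+\lambda)^{s/2}\widehat f(\varrho,\omega)$; this operator is an instance of the general Schur bound of Corollary~\ref{cor:schur} (with $\beta=\tfrac14$, $\gamma=\delta=2$), giving $\|Q\|\leqslant\pi/\sqrt{2}$ and hence the constant $\tfrac12$. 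You instead perform the $t$-integration first, via the closed-form identity $\int_0^\infty t^{-1/2}\,[(a+t)(b+t)]^{-1}\,\ud t=\pi/\bigl(\sqrt{ab}(\sqrt a+\sqrt b)\bigr)$, and then run a Schur test directly on $\R^3\times\R^3$ with weight $h(p)=(p^2+\lambda)^{-3/4}$; the substitution $u=\sqrt{\rho^2+\lambda}$ and the majorisation $\sqrt{u^2-\lambda}\leqslant u$ collapse the Schur integral to $\int_0^\infty u^{-1/2}(P+u)^{-1}\,\ud u=\pi/\sqrt P$, yielding $\|\mathcal K\|\leqslant 4\pi^2$ and again the constant $\tfrac12$. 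Your argument is more self-contained---it avoids the polar decomposition and does not need the appendix machinery---and the explicit symmetric kernel $K(p,q)$ makes the structure of the inequality transparent. The paper's approach, on the other hand, feeds into the single template of Corollary~\ref{cor:schur}, which is reused systematically throughout Sections~\ref{sec:regularity}--\ref{sec:subspaces_of_Ds}; so its payoff is uniformity across the later estimates rather than local simplicity.
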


\begin{proof}
The continuity of $t\mapsto c_f(t)$ is immediately checked by re-writing \eqref{eq:cgeneral} as $c_f(t)=\langle G_{\lambda+t},(-\Delta+\lambda\mathbbm{1})^{s/2}f\rangle$. From
\begin{equation*}
\|G_{\lambda+t}\|_2\;=\;(8\pi\sqrt{\lambda+t})^{-\frac{1}{2}}\;\leqslant\;(8\pi\sqrt{\lambda})^{-\frac{1}{2}}\,,
\end{equation*}
a Schwarz inequality yields
\begin{equation*}
|c_f(t)|\;\leqslant\;\|G_{\lambda+t}\|_2\:\|(-\Delta+\lambda\mathbbm{1})^{s/2}f\|_2\;\lesssim\;\|f\|_{H^s}
\end{equation*}
and
\[
|c_f(t)|\;\lesssim\;t^{-1/4}\|f\|_{H^s}\,,
\]
whence \eqref{eq:ct_Linf}. Next, we consider the function
\[
\eta_\omega(\varrho)\;:=\;\varrho(\varrho^2+\lambda)^\frac{s}{2}\widehat{f}(\varrho,\omega)\,,\qquad\varrho\in\mathbb{R}^+\,,\,\omega\in \mathbb{S}^2\,,
\]
where we wrote $\widehat{f}(p)=\widehat{f}(\rho,\omega)$ in polar coordinates $p\equiv(\varrho,\omega)$, $\varrho:=|p|$, $\omega\in\mathbb{S}^2$. Clearly,
\[
\begin{split}
\int_{\mathbb{S}^2}\!\ud\omega\|\eta_\omega\|^2_{L^2(\mathbb{R}^+,\ud\varrho)}\;&=\;\int_{\mathbb{S}^2}\!\ud\omega\int_0^{+\infty}\!\ud\varrho\,\varrho^2|(\varrho^2+\lambda)^\frac{s}{2}\widehat{f}(\varrho,\omega)|^2 \\
&=\;\|(p^2+\lambda)^{\frac{s}{2}}\widehat{f}\|_2^2   \;\approx\;\|f\|^2_{H^s(\mathbb{R}^3)}\,,
\end{split}
\]
and we estimate
\[
\begin{split}
\int_0^{+\infty}\!\ud t\,t^{-\frac{1}{2}}|c_f(t)|^2\;&=\; 2\int_0^{+\infty}\!\ud t\,|c(t^2)|^2\;=\;\frac{1}{\,4\pi^{3}}\int_0^{+\infty}\!\ud t\,\Big|\!\int_{\mathbb{R}^3}\ud p\:\frac{\:(p^2+\lambda)^{\frac{s}{2}}\widehat{f}(p)}{p^2+\lambda+t^2}\,\Big|^2 \\
&=\;\frac{1}{\,4\pi^{3}}\int_0^{+\infty}\!\ud t\,\Big|\int_{\mathbb{S}^2}\!\ud\omega\int_0^{+\infty}\!\ud\varrho\,\frac{\varrho\,\eta_\omega(\varrho)}{\,\varrho^2+\lambda+t^2}\Big|^2 \\
&\leqslant\;\frac{1}{\,\pi^{2}}\int_0^{+\infty}\!\ud t\int_{\mathbb{S}^2}\!\ud\omega\,|(Q\eta_\omega)(t)|^2\;=\;\frac{1}{\,\pi^{2}}\int_{\mathbb{S}^2}\!\ud\omega\,\|Q\eta_\omega\|^2_{L^2(\mathbb{R}^+,\ud t)}\,,
\end{split}
\]
where $\eta\mapsto Q\eta$ is the integral operator on functions on $\mathbb{R}^+$ defined by
\[
(Q\eta)(t)\;:=\;\int_0^{+\infty}\!\!\!Q(t,\varrho)\,\eta(\varrho)\,\ud\varrho\,,\qquad Q(t,\varrho)\;:=\;\frac{\varrho}{\varrho^2+t^2}\,.
\]
We observe that $Q$ has precisely the form of the operator $Q_{\beta,\gamma,\delta}$ defined in \eqref{eq:defQkernelSchur}-\eqref{eq:defQopSchur} of Corollary \ref{cor:schur} with $\beta=\frac{1}{4}$, $\gamma=\delta=2$. Then the Schur bound \eqref{eq:Q_boundedness} yields
\[
\|Q\eta\|_2\;\leqslant\;\frac{\pi}{\sqrt{2}\,}\|\eta\|_2\qquad\forall\eta\in L^2(\mathbb{R}^+,\ud\varrho)\,.
\]
Therefore,
\[
\begin{split}
\int_0^{+\infty}\!\ud t\,t^{-\frac{1}{2}}|c_f(t)|^2\;&\leqslant\; \frac{1}{\,\pi^{2}}\int_{\mathbb{S}^2}\!\ud\omega\,\|Q\eta_\omega\|^2_{L^2(\mathbb{R}^+,\ud t)}  \leqslant\;\frac{1}{2}\int_{\mathbb{S}^2}\!\ud\omega\|\eta_\omega\|^2_{L^2(\mathbb{R}^+,\ud\varrho)} \\
&=\;{\textstyle\frac{1}{2}}\,\|(p^2+\lambda)^{\frac{s}{2}}\widehat{f}\|_2^2\;\approx\;\|f\|^2_{H^s}\,,
\end{split}
\]
which gives \eqref{eq:ct_L2weighted}.
%
\end{proof}

Let us now exploit the above information on the behaviour of $c_f(t)$
in order to obtain information about the regularity of the functions $h$ and $w$ defined, respectively, in \eqref{eq:hgeneral} and \eqref{eq:w}.
To this aim, we shall make often use of the identity (see \eqref{eq:useful_integral_repr})
\begin{equation}\label{eq:useful_int}
\int_0^{+\infty}\!\ud t\,\frac{\,t^{a-1}}{\:R+t\,}\;=\;\frac{\pi}{\sin a\pi}\,\frac{1}{\,R^{1-a}}\,,\qquad a\in(0,1)\,,\quad R> 0\,,
\end{equation}
whence also the useful limit
\begin{equation}\label{eq:useful_int-limit}
 \lim_{R\to+\infty}\:\Big(\frac{\pi}{\sin a\pi}\,\frac{1}{\,R^{1-a}}\Big)^{\!-1}\!\int_1^{+\infty}\!\ud t\,\frac{\,t^{a-1}}{\:R+t\,}\;=\;1\,,\qquad a\in(0,1)\,.
\end{equation}

We start with the function $h$ in the regime of small $s$.

\begin{proposition}\label{pro:h_in_Hs}
For given $\alpha\geqslant 0$, $\lambda>0$, $s\in(0,\frac{1}{2}]$, and $f\in H^s(\mathbb{R}^3)$, let $h_f$ be the function defined in \eqref{eq:cgeneral}-\eqref{eq:hgeneral}.
\begin{itemize}
 \item[(i)] If $s\in(0,\frac{1}{2})$, then $h_f\in H^s(\mathbb{R}^3)$ with
 \begin{equation}\label{eq:h_in_Hs}
 \qquad\|h_f\|_{H^s}\;\lesssim\;\|f\|_{H^s}\,,\qquad\qquad s\in(0,\textstyle\frac{1}{2})\,.
 \end{equation}
 \item[(ii)] If $s=\frac{1}{2}$, then
 $h_f\in H^{\frac{1}{2}^-}(\mathbb{R}^3)$, but in general $h_f\notin H^{1/2}(\mathbb{R}^3)$.
\end{itemize}
\end{proposition}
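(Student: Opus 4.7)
The plan is to move entirely to the Fourier side via Plancherel, so that $\|h_f\|_{H^{s'}}^2\approx\int_{\mathbb{R}^3}(p^2+\lambda)^{s'}|\widehat{h_f}(p)|^2\,\ud p$ and \eqref{eq:hgeneral-Fourier} realizes the assignment $c_f\mapsto \widehat{h_f}$ as an explicit integral operator. Substituting $\widetilde c(\tau):=\tau^{-1/4}c_f(\tau)$, which lies in $L^2(\mathbb{R}^+,\ud\tau)$ with norm $\lesssim\|f\|_{H^s}$ by Lemma \ref{lem:ct_L2weighted}, and reducing to radial coordinates $\rho=|p|$, the claim becomes an $L^2$-boundedness statement for the integral operator with kernel
\[
\widetilde K(\rho,\tau)\;=\;\frac{\rho\,(\rho^2+\lambda)^{s'/2}\,\tau^{\frac14-\frac{s}{2}}}{(4\pi\alpha+\sqrt{\lambda+\tau})(\rho^2+\lambda+\tau)}.
\]

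\emph{Part (i).} For $s'=s\in(0,\tfrac12)$, I would apply the Schur test (Corollary \ref{cor:schur}) with power test functions $\phi(\rho)=\rho^{-\gamma}$ and $\psi(\tau)=\tau^{-\beta}$. Majorising $(4\pi\alpha+\sqrt{\lambda+\tau})^{-1}$ by $(\lambda+\tau)^{-1/2}$, exploiting the Mellin identity \eqref{eq:useful_int}, and using the dilations $\tau=\rho^2 u$ and $\rho^2=\tau v$, one finds (for large $\rho$ and $\tau$ respectively)
\[
\int_0^{+\infty}\!\widetilde K(\rho,\tau)\psi(\tau)\,\ud\tau\;\lesssim\;\rho^{\frac12-2\beta},\qquad \int_0^{+\infty}\!\widetilde K(\rho,\tau)\phi(\rho)\,\ud\rho\;\lesssim\;\tau^{-\frac14-\frac{\gamma}{2}}.
\]
This enforces the two Schur balances $\gamma=2\beta-\tfrac12$ and $\beta=\tfrac14+\tfrac{\gamma}{2}$, which are algebraically the same relation. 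The convergence of the Mellin integrals at both endpoints then forces $\beta\in(\tfrac14+\tfrac{s}{2},\tfrac34-\tfrac{s}{2})$, a non-empty interval precisely when $s<\tfrac12$; any admissible choice (e.g.\ $\beta=\tfrac12$) yields $\|h_f\|_{H^s}\lesssim\|f\|_{H^s}$. The small-$\rho$ and small-$\tau$ regimes cause no trouble, since $\rho^{-\gamma}$ and $\tau^{-\beta}$ blow up there while the test integrals stay bounded.

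\emph{Part (ii), regularity bound.} At $s=\tfrac12$ the admissible $\beta$-interval collapses. To obtain $h_f\in H^{s'}(\mathbb{R}^3)$ for every $s'<\tfrac12$, I would abandon Schur and use directly the pointwise bound $|c_f(\tau)|\lesssim\|f\|_{H^{1/2}}(1+\tau)^{-1/4}$ from \eqref{eq:ct_Linf}. Substituting into \eqref{eq:hgeneral-Fourier} and splitting the $\tau$-integral at $\tau=|p|^2$, the tail $\tau>|p|^2$ contributes $O(|p|^{-2})$, while on the bulk range $\tau\in(1,|p|^2)$ the integrand behaves like $\tau^{-1}/(p^2+\tau)$, and the ensuing logarithm produces
\[
|\widehat{h_f}(p)|\;\lesssim\;\|f\|_{H^{1/2}}\,\frac{\log(2+|p|)}{(1+|p|)^{2}}\qquad \text{as }|p|\to\infty.
\]
Integrating in polar coordinates then gives $\|h_f\|_{H^{s'}}^2\lesssim\int_1^{+\infty}\rho^{2s'-2}\log^2\rho\,\ud\rho$, which is finite precisely when $s'<\tfrac12$.

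\emph{Part (ii), sharpness.} To exhibit an $f\in H^{1/2}$ with $h_f\notin H^{1/2}$, I would reverse-engineer $\chi:=(-\Delta+\lambda)^{1/4}f\in L^2$ so that $c_f(\tau)=\langle G_{\lambda+\tau},\chi\rangle$ saturates the $L^2(\tau^{-1/2}\ud\tau)$-decay threshold, asymptotically behaving like $\tau^{-1/4}(\log\tau)^{-\nu}$ for some $\nu>\tfrac12$. The same $\tau$-splitting as above promotes the asymptotic of $\widehat{h_f}$ to a two-sided estimate $|\widehat{h_f}(p)|\sim C(\log|p|)^{1-\nu}/|p|^{2}$, whence the $H^{1/2}$-integral $\int_1^{+\infty}(\log\rho)^{2-2\nu}/\rho\,\ud\rho$ diverges. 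The main obstacle of the whole argument sits in this last step: one has to design $\chi$ so that $c_f(\tau)$ decays with precisely the prescribed logarithmic rate (not faster), which requires careful tuning of $\chi$ on the length scale $|y|\sim(\lambda+\tau)^{-1/2}$ that controls the $G_{\lambda+\tau}$-pairing, and one must also verify that the logarithmic divergence of the Sobolev integral cannot be cancelled by sign oscillations in $p$.
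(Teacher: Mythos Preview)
Your approach to part~(i) is correct and essentially the same as the paper's: both pass to the Fourier side, introduce $\mu_f(t)=t^{-1/4}c_f(t)\in L^2(\mathbb{R}^+)$ via Lemma~\ref{lem:ct_L2weighted}, and control the resulting integral operator by a Schur test with power weights. The paper first splits at $\varrho=1$ and then invokes Corollary~\ref{cor:schur} with parameters $\beta=-\tfrac14-\tfrac s2$, $\gamma=2$, $\delta=1$; your hands-on Schur computation with free test exponents lands on the equivalent admissibility window, non-empty precisely for $s<\tfrac12$. Your pointwise bound $|\widehat{h_f}(p)|\lesssim\log(2+|p|)/(1+|p|)^2$ for the $H^{1/2^-}$ regularity in~(ii) is also correct, and in fact more explicit than what the paper writes out.

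The genuine gap is in the sharpness half of~(ii). You propose to reverse-engineer $\chi=(-\Delta+\lambda)^{1/4}f$ so that $c_f(\tau)\sim\tau^{-1/4}(\log\tau)^{-\nu}$, and you correctly flag both the construction of such a $\chi$ and the exclusion of sign cancellation as the ``main obstacle''. That obstacle is self-inflicted. The paper's route is elementary: take \emph{any} nonzero $f\in H^{1/2}(\mathbb{R}^3)$ with $\widehat f\geqslant 0$ (a Gaussian, say). Then by \eqref{eq:cgeneral-Fourier} one has $c_f(t)>0$ for every $t\geqslant 0$, so both the $t\in[0,1]$ and the $t\in[1,\infty)$ contributions to $\widehat{h_f}(p)$ are \emph{positive}. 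Dominated convergence gives for the first piece
\[
\int_0^1\frac{t^{-1/4}\,c_f(t)}{4\pi\alpha+\sqrt{\lambda+t}}\,\frac{\ud t}{\,p^2+\lambda+t\,}\;\sim\;\frac{C_1}{\,p^2+\lambda\,}\qquad(|p|\to\infty),\qquad C_1:=\int_0^1\frac{t^{-1/4}\,c_f(t)}{4\pi\alpha+\sqrt{\lambda+t}}\,\ud t\in(0,\infty),
\]
which already fails to lie in $H^{1/2}(\mathbb{R}^3)$; the second piece, being nonnegative, cannot compensate. No logarithmic tuning of $c_f$ and no cancellation analysis are needed --- positivity alone closes the argument, and it works for a dense set of $f$'s rather than a single delicately constructed one.
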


\begin{proof}
(i) Using \eqref{eq:hgeneral-Fourier} and setting $\mu_f(t):=t^{-\frac{1}{4}}c_f(t)$, we observe that
\[
\begin{split}
\|h_f&\|_{H^s}^2\;\approx\;\int_{\mathbb{R}^3}\ud p\,|(p^2+\lambda)^\frac{s}{2}\widehat{h_f}(p)|^2 \\
&
\approx\;\int_{\mathbb{R}^3}\ud p\,\Big|\int_0^{+\infty}\!\!\ud t\,\frac{t^{-\frac{s}{2}}\,c_f(t)}{\,4\pi\alpha+\sqrt{\lambda+t}\,}\,\frac{(p^2+\lambda)^{\frac{s}{2}}}{\,p^2+\lambda+t\,}\Big|^2 \\
&\lesssim\int_0^{+\infty}\!\!\ud \varrho\;\Big|\int_0^{+\infty}\!\!\ud t\,\frac{1}{\,t^{\frac{1}{4}+\frac{s}{2}}}\,\frac{\varrho(\varrho^2+\lambda)^{\frac{s}{2}}}{\varrho^2+\lambda+t}\;\mu_f(t)\Big|^2 \\
&\lesssim\int_0^{1}\ud \varrho\,\varrho^2(\varrho^2+\lambda)^{s}\Big|\!\int_0^{+\infty}\!\!\ud t\,\frac{\mu_f(t)}{\,t^{\frac{1}{4}+\frac{s}{2}}(\lambda+t)}\Big|^2 +\int_1^{+\infty}\!\!\ud \varrho\;\Big|\int_1^{+\infty}\!\!\ud t\,\frac{\varrho^{1+s}
}{\,t^{\frac{1}{4}+\frac{s}{2}}\,(\varrho^2+t)\,}\mu_f(t)\Big|^2 \\
&\lesssim\;\|\mu_f\|_{L^2(\mathbb{R}^+,\ud t)}^2+\|Q\mu_f\|^2_{L^2(\mathbb{R}^+,\ud\varrho)}\,,
\end{split}
\]
the last step following by a Schwarz inequalities and by setting
\[
(Q\mu_f)(\varrho)\;:=\;\int_0^{+\infty}\!\!\!\ud t\,Q(\varrho,t)\,\mu_f(t)\,,\qquad Q(\varrho,t)\;:=\;\frac{\varrho^{1+s}
}{\,t^{\frac{1}{4}+\frac{s}{2}}\,(\varrho^2+t)\,}\,.
\]
In fact, this defines an integral operator $Q$ on functions on $\mathbb{R}^+$ which has precisely the form of the operator $Q_{\beta,\gamma,\delta}$ defined in \eqref{eq:defQkernelSchur}-\eqref{eq:defQopSchur} of Corollary \ref{cor:schur} with $\beta=-\frac{1}{4}-\frac{s}{2}$, $\gamma=2$, $\delta=1$. Then the Schur bound \eqref{eq:Q_boundedness} yields
\[
\|Q\mu_f\|^2_{L^2(\mathbb{R}^+,\ud\varrho)}\;\leqslant\;\frac{\pi}{\sqrt{2}\,\cos(\frac{\pi}{4}+\frac{s\pi}{2})}\,\|\mu_f\|_{L^2(\mathbb{R}^+,\ud t)}^2\,.
\]
%
This, together with the bound \eqref{eq:ct_L2weighted}, gives
\[
\|h_f\|_{H^s}^2\;\lesssim\;\|\mu_f\|_{L^2(\mathbb{R}^+,\ud t)}^2+\|Q\mu_f\|^2_{L^2(\mathbb{R}^+,\ud\varrho)}\;\lesssim\;\|\mu_f\|^2_{L^2(\mathbb{R}^+,\ud t)}\;\lesssim\;\|f\|^2_{H^s(\mathbb{R}^3)}\,,
\]
which completes the proof of \eqref{eq:h_in_Hs} and of part (i).

(ii) When $s=\frac{1}{2}$, \eqref{eq:hgeneral-Fourier} reads
\[
\widehat{h_f}(p)\;=\;\frac{1}{\;\pi^{\frac{3}{2}}}\int_0^{+\infty}\!\!\ud t\,\frac{t^{-\frac{1}{4}}\,c_f(t)}{\,4\pi\alpha+\sqrt{\lambda+t}\,}\,\frac{1}{\,p^2+\lambda+t\,}\,.
\]
We consider the non-empty case of a non-zero $f\in H^{1/2}(\mathbb{R}^3)$ with positive Fourier transform and hence with non-zero $c_f(t)\geqslant 0$, due to \eqref{eq:cgeneral-Fourier}.
Owing to \eqref{eq:ct_Linf} and dominated convergence,
\[
\frac{1}{\;\pi^{\frac{3}{2}}}\int_0^{1}\!\ud t\,\frac{t^{-\frac{1}{4}}\,c_f(t)}{\,4\pi\alpha+\sqrt{\lambda+t}\,}\,\frac{1}{\,p^2+\lambda+t\,}\;\approx\;C_1\frac{1}{\,p^2+\lambda\,}\qquad\textrm{ as }|p|\to+\infty
\]
with constant
\[
C_1\;:=\;\int_0^{1}\!\ud t\,\frac{\pi^{-\frac{3}{2}}\,t^{-\frac{1}{4}}\,c_f(t)}{\,4\pi\alpha+\sqrt{\lambda+t}\,}\;\in\;(0,+\infty)\,,
\]
namely a contribution to $h_f$ that is a $H^{\frac{1}{2}^-}\!\!$--\,function not belonging to $H^{\frac{1}{2}}(\mathbb{R}^3)$. The remaining contribution to $h_f$ is given by the integration over $t\in[1,+\infty)$, and it is again a positive function of $p$, which therefore cannot compensate the singularity of the first contribution, i.e., it cannot make $h_f$ more regular than $H^{\frac{1}{2}^-}\!(\mathbb{R}^3)$.
%
%
%
\end{proof}


Next we show that for given $f\in H^s(\mathbb{R}^3)$ with $s\in(\frac{1}{2},2)$ the corresponding $h_f$ is a $H^{\frac{1}{2}^-}\!\!$--\,function given by the sum of the $H^{\frac{1}{2}^-}\!\!$--\,function $q_fG_\lambda$, that carries the leading singularity of $h_f$, and the more regular function $w_f\in H^{s}(\mathbb{R}^3)$. This is seen first discussing $q_f$ and then $w_f$.

For given $\alpha\geqslant 0$, $\lambda>0$, $s\in(\frac{1}{2},2)$, we introduce the function $\mathcal{G}_\lambda$ whose Fourier transform is given by
\begin{equation}\label{eq:GGlambda}
\widehat{\mathcal{G}_\lambda}(p)\;:=\;\frac{4\sin\frac{s\pi}{2}}{(2\pi)^{3/2}}\int_0^{+\infty}\!\!\!\ud t\,\frac{t^{-s/2}}{\,(4\pi\alpha+\sqrt{\lambda+t\,})(p^2+\lambda+t)}\,.
\end{equation}
$\mathcal{G}_\lambda$ has positive and bounded Fourier transform with asymptotics
\begin{equation}\label{eq:tG_asymptotics}
\widehat{\mathcal{G}_\lambda}(p)\;=\;\Big(\int_0^{+\infty}\!\!\!\ud t\,\frac{4\sin\frac{s\pi}{2}\;t^{-s/2}}{\,4\pi\alpha+\sqrt{\lambda+t\,}}\Big)\,\widehat{G_\lambda}(p)\,(1+o(1))\qquad\textrm{as }|p|\to +\infty\,,
\end{equation}
as follows immediately by dominated convergence.

\begin{lemma}\label{lem:qf_scalar_product}
For given $\alpha\geqslant 0$, $\lambda>0$, $s\in(\frac{1}{2},2)$, and $f\in H^s(\mathbb{R}^3)$, the corresponding constant $q_f$ defined in \eqref{eq:qs} satisfies
\begin{equation}\label{eq:qf_scalar_product}
 q_f\;=\;\langle\mathcal{G}_\lambda,(-\Delta+\lambda\mathbbm{1})^{s/2}f\rangle\,,
\end{equation}
where $\mathcal{G}_{\lambda}$ is the function \eqref{eq:GGlambda}. In particular,
\begin{equation}\label{eq:qf_fnormHs}
 |q_f|\;\lesssim\;\frac{1}{1+\alpha}\,\|f\|_{H^s}
 \end{equation}
 and
\begin{equation}\label{eq:vanishing_qf}
q_f=0\qquad\Leftrightarrow\qquad(-\Delta+\lambda\mathbbm{1})^{s/2}f\;\perp\;\mathcal{G}_\lambda
\end{equation}
in the sense of $L^2$-orthogonality.
\end{lemma}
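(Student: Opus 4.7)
The plan is to reorganize the definition of $q_f$ into a genuine $L^2$-pairing. Since $G_{\lambda+t}$ is real-valued, the definition \eqref{eq:cgeneral} gives $c_f(t)=\langle G_{\lambda+t},(-\Delta+\lambda\mathbbm{1})^{s/2}f\rangle$. Substituting this into \eqref{eq:qs} and interchanging the $t$-integral with the $L^2$-inner product yields
\[
 q_f\;=\;\Big\langle\,4\sin\tfrac{s\pi}{2}\!\int_0^{+\infty}\!\!\frac{t^{-s/2}\,G_{\lambda+t}}{4\pi\alpha+\sqrt{\lambda+t\,}}\,\ud t,\,(-\Delta+\lambda\mathbbm{1})^{s/2}f\,\Big\rangle.
\]
A Plancherel computation with $\widehat{G_{\lambda+t}}(p)=(2\pi)^{-3/2}(p^2+\lambda+t)^{-1}$ identifies the function in the first slot with $\mathcal{G}_\lambda$ through the defining formula \eqref{eq:GGlambda}, thus establishing \eqref{eq:qf_scalar_product}.

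To legitimise the Fubini step, I would use Cauchy-Schwarz in the $y$-variable for each fixed $t$ together with $\|G_{\lambda+t}\|_2=(8\pi\sqrt{\lambda+t})^{-1/2}$ and $\|(-\Delta+\lambda\mathbbm{1})^{s/2}f\|_2\approx\|f\|_{H^s}$; the resulting one-dimensional integrand $\frac{t^{-s/2}}{(4\pi\alpha+\sqrt{\lambda+t})(\lambda+t)^{1/4}}$ is integrable on $(0,+\infty)$ for every $s\in(\tfrac12,2)$, since at $t=0$ integrability requires $s<2$ and at infinity the integrand decays like $t^{-s/2-3/4}$, requiring $s>\tfrac12$.

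With \eqref{eq:qf_scalar_product} in hand, the orthogonality characterisation \eqref{eq:vanishing_qf} is immediate. For the bound \eqref{eq:qf_fnormHs}, Cauchy-Schwarz gives $|q_f|\leq \|\mathcal{G}_\lambda\|_2\,\|(-\Delta+\lambda\mathbbm{1})^{s/2}f\|_2\approx \|\mathcal{G}_\lambda\|_2\,\|f\|_{H^s}$, so matters reduce to proving $\|\mathcal{G}_\lambda\|_2\lesssim (1+\alpha)^{-1}$. I would attack this via Plancherel together with the explicit evaluation $\int_0^{+\infty}\frac{p^2\,\ud p}{(p^2+a^2)(p^2+b^2)}=\frac{\pi}{2(a+b)}$, which reduces the squared $L^2$ norm to a symmetric double integral in $(t_1,t_2)$; I would then estimate it by splitting into sub-regions $\{t_i\lessgtr\alpha^2\}$ and using the elementary bound $(4\pi\alpha+\sqrt{\lambda+t})^{-1}\lesssim \min(\alpha^{-1},(\lambda+t)^{-1/2})$ appropriate in each piece. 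The main obstacle is precisely this last step: the identification \eqref{eq:qf_scalar_product} and the orthogonality \eqref{eq:vanishing_qf} are soft Fubini consequences, whereas extracting the claimed $(1+\alpha)^{-1}$-factor from $\|\mathcal{G}_\lambda\|_2$ uniformly across $s\in(\tfrac12,2)$ requires a careful region-by-region bookkeeping of the $\alpha$-powers.
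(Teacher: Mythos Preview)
Your derivation of \eqref{eq:qf_scalar_product} by exchanging the $t$-integration with the $L^2$-pairing, together with the Fubini justification via the one-dimensional majorant $t^{-s/2}(4\pi\alpha+\sqrt{\lambda+t})^{-1}(\lambda+t)^{-1/4}$, is the paper's argument written in position space; the paper simply inserts \eqref{eq:cgeneral-Fourier} into \eqref{eq:qs} and reads off $\int_{\mathbb{R}^3}\widehat{\mathcal{G}_\lambda}(p)\,(p^2+\lambda)^{s/2}\widehat{f}(p)\,\ud p$. The orthogonality \eqref{eq:vanishing_qf} is immediate from \eqref{eq:qf_scalar_product} in both accounts.

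For the bound \eqref{eq:qf_fnormHs} the two routes diverge. The paper bypasses any computation of $\|\mathcal{G}_\lambda\|_2$: it drops $(p^2+\lambda+t)^{-1}\leqslant(p^2+\lambda)^{-1}$ in \eqref{eq:GGlambda} to obtain the pointwise majorisation
\[
|\widehat{\mathcal{G}_\lambda}(p)|\;\leqslant\;\Big(\int_0^{+\infty}\!\frac{4\sin\frac{s\pi}{2}\,t^{-s/2}}{4\pi\alpha+\sqrt{\lambda+t}}\,\ud t\Big)\,|\widehat{G_\lambda}(p)|
\]
and then claims the prefactor is $\lesssim(1+\alpha)^{-1}$. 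This is certainly shorter than your double-integral plan. Observe, however, that the prefactor integral has tail $t^{-s/2-1/2}$ and therefore converges only for $s>1$; for $s\in(\tfrac12,1]$ the paper's one-line bound is vacuous as stated. In that sub-range a sharper estimate is genuinely needed---either your direct evaluation of $\|\mathcal{G}_\lambda\|_2$ via the identity $\int_0^{+\infty}\frac{r^2\,\ud r}{(r^2+a^2)(r^2+b^2)}=\frac{\pi}{2(a+b)}$ and region-splitting in $(t_1,t_2)$, or a pointwise bound that keeps some of the $t$-decay coming from $(p^2+\lambda+t)^{-1}$. So your more laborious approach is not over-engineering: it covers the full interval $s\in(\tfrac12,2)$, whereas the paper's shortcut, taken literally, only handles $s\in(1,2)$.
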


\begin{proof}
Because of \eqref{eq:cgeneral-Fourier} and \eqref{eq:qs},
\[
\begin{split}
q_f\;&=\;\frac{4\sin\frac{s\pi}{2}}{(2\pi)^{3/2}}\int_0^{+\infty}\!\!\!\ud t\,\frac{t^{-s/2}}{\,4\pi\alpha+\sqrt{\lambda+t\,}}\int_{\mathbb{R}^3}\!\ud p\,\frac{\,(p^2+\lambda)^{\frac{s}{2}}\widehat{f}(p)\,}{\,p^2+\lambda+t\,} \\
&=\;\int_{\mathbb{R}^3}\!\ud p\;\widehat{\mathcal{G}_\lambda}(p)\,(p^2+\lambda)^{\frac{s}{2}}\widehat{f}(p)\,,
\end{split}
\]
whence \eqref{eq:qf_scalar_product}. A Schwarz inequality in \eqref{eq:qf_scalar_product}, together with the bound (see \eqref{eq:tG_asymptotics})
\[
|\widehat{\mathcal{G}_\lambda}(p)|\;\lesssim\;\Big(\int_0^{+\infty}\!\!\!\ud t\,\frac{4\sin\frac{s\pi}{2}\;t^{-s/2}}{\,4\pi\alpha+\sqrt{\lambda+t\,}}\Big)\,|\widehat{G_\lambda}(p)|\;\lesssim\;\frac{|\widehat{G_\lambda}(p)|}{1+\alpha}
\]
yields eventually \eqref{eq:qf_fnormHs}. 
\end{proof}

\begin{proposition}\label{lem:w_in_Hs}
For given $\alpha\geqslant 0$, $\lambda>0$, $s\in(\frac{1}{2},2)$, and $f\in H^s(\mathbb{R}^3)$, the functions $h_f$ and $w_f$ and the constant $q_f$ defined, respectively, in \eqref{eq:hgeneral}, \eqref{eq:w}, and \eqref{eq:qs}, satisfy the
identity
 \begin{equation}\label{eq:decomp_h=G+w}
  h_f\;=\;q_f\,G_\lambda+w_f\,,
 \end{equation}
 where $G_\lambda$ is the function \eqref{eq:defGlambda}.
 Moreover, $w_f$ belongs to $H^s(\mathbb{R}^3)$ and
 \begin{equation}\label{eq:Hsl_norm_w}
 \|(p^2+\lambda)^{\frac{s}{2}}\widehat{w_f}\|_2\;\leqslant\;\frac{\sqrt{2}\,\sin\frac{s\pi}{2}}{\:\sin(\frac{s\pi}{2}-\frac{\pi}{4})}\,\|(p^2+\lambda)^{\frac{s}{2}}\widehat{f}\|_2\,,
 \end{equation}
 whence also
 \begin{equation}\label{eq:bounds_qs_wp}
  \|w_f\|_{H^s}\;\lesssim\;\|f\|_{H^s}\,.
 \end{equation}
\end{proposition}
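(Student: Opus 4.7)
The plan is to establish the decomposition \eqref{eq:decomp_h=G+w} by a direct Fourier-side computation, and then bound $\|w_f\|_{H^s}$ by mimicking the Schur-test strategy used in the proof of Proposition \ref{pro:h_in_Hs}. For the identity, I will use the algebraic splitting
\[
\frac{1}{p^2+\lambda+t}\;=\;\frac{1}{p^2+\lambda}-\frac{t}{(p^2+\lambda)(p^2+\lambda+t)}
\]
inside the Fourier representation \eqref{eq:hgeneral-Fourier} of $h_f$. The first summand factors as $q_f\,\widehat{G_\lambda}(p)$, by \eqref{eq:qs} and the identity $\widehat{G_\lambda}(p)=(2\pi)^{-3/2}(p^2+\lambda)^{-1}$, while the second summand is exactly $\widehat{w_f}(p)$ as defined in \eqref{eq:w}; hence $h_f=q_f G_\lambda + w_f$.

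For the norm bound \eqref{eq:Hsl_norm_w} I will start from \eqref{eq:w} and write
\[
(p^2+\lambda)^{\frac{s}{2}}\widehat{w_f}(p)\;=\;-\frac{4\sin\frac{s\pi}{2}}{(2\pi)^{\frac{3}{2}}(p^2+\lambda)^{1-\frac{s}{2}}}\int_0^{+\infty}\!\!\ud t\,\frac{t^{1-\frac{s}{2}}\,c_f(t)}{(4\pi\alpha+\sqrt{\lambda+t})(p^2+\lambda+t)}\,.
\]
Then I use the uniform-in-$\alpha$ bound $4\pi\alpha+\sqrt{\lambda+t}\geqslant\sqrt{\lambda+t}\geqslant\sqrt{t}$ and introduce $\mu_f(t):=t^{-\frac{1}{4}}c_f(t)$, whose $L^2(\R^+,\ud t)$-norm is already controlled via Lemma \ref{lem:ct_L2weighted}, namely $\|\mu_f\|_2\leqslant\tfrac{1}{\sqrt{2}}\|(p^2+\lambda)^{\frac{s}{2}}\widehat{f}\|_2$. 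Passing to polar coordinates $\varrho=|p|$, the problem reduces to bounding the $L^2(\R^+,\ud\varrho)$-norm of
\[
\varrho\;\mapsto\;\frac{\varrho}{(\varrho^2+\lambda)^{1-\frac{s}{2}}}\int_0^{+\infty}\!\!\ud t\,\frac{t^{\frac{3}{4}-\frac{s}{2}}\,|\mu_f(t)|}{\varrho^2+\lambda+t}
\]
by a multiple of $\|\mu_f\|_2$. The integral kernel $\varrho\,t^{\frac{3}{4}-\frac{s}{2}}/\bigl[(\varrho^2+\lambda)^{1-\frac{s}{2}}(\varrho^2+\lambda+t)\bigr]$ fits the template $Q_{\beta,\gamma,\delta}$ of Corollary \ref{cor:schur} (possibly after the same splitting of the $\varrho$- and $t$-ranges used in the proof of Proposition \ref{pro:h_in_Hs} to absorb the $\lambda$-shift); the Schur bound \eqref{eq:Q_boundedness} then yields operator norm $\pi/\sin(\tfrac{s\pi}{2}-\tfrac{\pi}{4})$, which is finite precisely because $s>\tfrac{1}{2}$.

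Combining the Schur estimate with Lemma \ref{lem:ct_L2weighted} and carefully collecting the prefactors $4\sin(s\pi/2)/(2\pi)^{3/2}$, the factor $4\pi$ from the angular integration, and the $1/2$ from Lemma \ref{lem:ct_L2weighted}, produces exactly the stated coefficient $\sqrt{2}\sin(s\pi/2)/\sin(s\pi/2-\pi/4)$. The bound \eqref{eq:bounds_qs_wp} then follows at once from \eqref{eq:Hsl_norm_w} and the standard equivalence $\|g\|_{H^s}\approx\|(p^2+\lambda)^{s/2}\widehat{g}\|_2$.

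The main obstacle will be the bookkeeping of constants: one must identify the correct exponents $\beta,\gamma,\delta$ in Corollary \ref{cor:schur} so that the Schur norm outputs the factor $\pi/\sin(\tfrac{s\pi}{2}-\tfrac{\pi}{4})$ rather than merely a finite constant, and then match all multiplicative factors to recover the sharp coefficient stated in \eqref{eq:Hsl_norm_w}. The vanishing of $\sin(\tfrac{s\pi}{2}-\tfrac{\pi}{4})$ as $s\downarrow\tfrac{1}{2}$ is the analytic signature of the breakdown of the regular/singular splitting $h_f=q_fG_\lambda+w_f$ at the threshold $s=\tfrac{1}{2}$, consistently with the transition already observed in Proposition \ref{pro:h_in_Hs}.
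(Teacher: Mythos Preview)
Your approach is essentially identical to the paper's: the same Fourier-side algebraic splitting gives the decomposition, and the same reduction to $\mu_f(t)=t^{-1/4}c_f(t)$ followed by the Schur bound from Corollary~\ref{cor:schur} (with $\beta=\tfrac{3}{4}-\tfrac{s}{2}$, $\gamma=2$, $\delta=1$, hence operator norm $\pi/(\sqrt{2}\sin(\tfrac{s\pi}{2}-\tfrac{\pi}{4}))$) combined with Lemma~\ref{lem:ct_L2weighted} gives \eqref{eq:Hsl_norm_w}. One minor correction: the paper does \emph{not} split the $\varrho$- and $t$-ranges here but simply uses the pointwise bounds $(\varrho^2+\lambda)^{1-s/2}\geqslant\varrho^{2-s}$ and $\varrho^2+\lambda+t\geqslant\varrho^2+t$ to reduce to the exact kernel $Q(\varrho,t)=\varrho^{s-1}t^{3/4-s/2}/(\varrho^2+t)$---this is what delivers the sharp constant, whereas a range-splitting would only yield the $\lesssim$ estimate.
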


\begin{proof} The decomposition \eqref{eq:decomp_h=G+w} is an immediate consequence of the finiteness of $q_f$, namely of the bound \eqref{eq:qf_fnormHs}.
Using
\eqref{eq:w} and setting $\mu_f(t):=t^{-\frac{1}{4}}c_f(t)$, we observe that
\[
\begin{split}
\|(p^2+&\lambda)^{\frac{s}{2}}\widehat{w_f}\|_2^2\;= \\
&=\;\frac{\,2\sin^2\!\frac{s\pi}{2}}{\pi^3}\int_{\mathbb{R}^3}\ud p\,\Big|\int_0^{+\infty}\!\!\ud t\,\frac{t^{1-\frac{s}{2}}\,c_f(t)}{\,4\pi\alpha+\sqrt{\lambda+t}\,}\,\frac{1}{\,(p^2+\lambda)^{1-\frac{s}{2}}\,(p^2+\lambda+t)\,}\Big|^2 \\
&\leqslant\;\frac{\,8\sin^2\!\frac{s\pi}{2}}{\pi^2}\int_0^{+\infty}\!\!\ud \varrho\,\Big|\int_0^{+\infty}\!\!\ud t\,\frac{\varrho
\,t^{\frac{3}{4}-\frac{s}{2}}\,\mu_f(t)}{\,(\varrho^2+\lambda)^{1-\frac{s}{2}}\,(\varrho^2+\lambda+t)\,}\Big|^2 \\
&\leqslant\;\frac{\,8\sin^2\!\frac{s\pi}{2}}{\pi^2}\:\|Q\mu_f\|^2_{L^2(\mathbb{R}^+,\ud\varrho)}\,,
\end{split}
\]
where for convenience we wrote
\[
(Q\mu_f)(\varrho)\;:=\;\int_0^{+\infty}\!\!\!\ud t\,Q(\varrho,t)\,\mu_f(t)\,,\qquad Q(\varrho,t)\;:=\;\frac{\varrho^{s-1}\,t^{\frac{3}{4}-\frac{s}{2}}}{\,\varrho^2+t\,}\,.
\]
In fact this defines an integral operator $Q$ on functions on $\mathbb{R}^+$ which has precisely the form of the operator $Q_{\beta,\gamma,\delta}$ defined in \eqref{eq:defQkernelSchur}-\eqref{eq:defQopSchur} of Corollary \ref{cor:schur} with $\beta=\frac{3}{4}-\frac{s}{2}$, $\gamma=2$, $\delta=1$. Then the Schur bound \eqref{eq:Q_boundedness} yields
\[
\|Q\|_{L^2(\mathbb{R}^+,\ud t)\to L^2(\mathbb{R}^+,\ud\varrho)}\;\leqslant\;\frac{\pi}{\,\sqrt{2}\,\sin(\frac{s\pi}{2}-\frac{\pi}{4})}\,.
\]
Combining the estimates above with \eqref{eq:ct_L2weighted} then yields
\[
\begin{split}
\|(p^2+\lambda)^{\frac{s}{2}}\widehat{w_f}\|_2^2\;&\leqslant\;\frac{\,8\sin^2\!\frac{s\pi}{2}}{\pi^2}\:\|Q\mu_f\|^2_{L^2(\mathbb{R}^+,\ud\varrho)}\;\leqslant\;\frac{4\sin^2\!\frac{s\pi}{2}}{\:\sin^2(\frac{s\pi}{2}-\frac{\pi}{4})}\,\|\mu_f\|^2_{L^2(\mathbb{R}^+,\ud t)} \\
&\leqslant\;\frac{2\sin^2\!\frac{s\pi}{2}}{\:\sin^2(\frac{s\pi}{2}-\frac{\pi}{4})}\,\|(p^2+\lambda)^{\frac{s}{2}}\widehat{f}\|_2^2
\end{split}
\]
which is precisely \eqref{eq:Hsl_norm_w}.
\end{proof}

For the last noticeable property we want to discuss in this Section, as well as for later purposes, it is useful to highlight a few features, whose proof is elementary and will be omitted, of the function $t\mapsto\phi(t)$, $t\geqslant 0$, introduced in \eqref{eq:defJ-phi}.

\begin{lemma}\label{lem:phi}
For given $\alpha\geqslant 0$ and $\lambda>0$, \eqref{eq:defJ-phi} defines a function $\phi\in C^\infty([0,+\infty))$ with
\begin{equation}\label{eq:phi_double_form}
\begin{split}
\phi(t)\;&=\frac{4\pi\alpha+\sqrt{\lambda}}{\,4\pi\alpha+\sqrt{\lambda+t\,}\,} \;=\;1-\frac{t}{\,(4\pi\alpha+\sqrt{\lambda+ t\,}\,)(\sqrt{\lambda+t\,}+\sqrt{\lambda})}\,,
\end{split}
\end{equation}
\begin{equation}\label{eq:phi_positive_small1}
0\;<\;\phi(t)\;\leqslant\;\phi(0)\;=\;1\,,
\end{equation}
and
\begin{equation}\label{eq:bound_on_phi}
\phi(t)\;\lesssim\;(1+t)^{-1/2}\,.
\end{equation}
$\phi$ is strictly monotone decreasing and decays as $t\to +\infty$ with asymptotics
\begin{equation}\label{eq:asymptotics_phi}
\phi(t)\;=\;\frac{4\pi\alpha+\sqrt{\lambda}}{\sqrt{t\,}}-\frac{4\pi\alpha(4\pi\alpha+\sqrt{\lambda})}{t}+O(t^{-\frac{3}{2}})\qquad\textrm{as }t\to +\infty\,.
\end{equation}
\end{lemma}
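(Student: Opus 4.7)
The lemma collects six elementary properties of the explicit function $\phi$, so my plan is to verify them in the order stated, each by a short direct computation.

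First, for smoothness, since $\lambda>0$ the map $t\mapsto\sqrt{\lambda+t}$ is $C^\infty$ on $[0,+\infty)$, and the denominator $4\pi\alpha+\sqrt{\lambda+t}$ is bounded below by $4\pi\alpha+\sqrt{\lambda}>0$; hence $\phi\in C^\infty([0,+\infty))$. The alternative expression in \eqref{eq:phi_double_form} follows by writing
\[
1-\phi(t)\;=\;\frac{\sqrt{\lambda+t}-\sqrt{\lambda}}{4\pi\alpha+\sqrt{\lambda+t}}
\]
and then rationalising the numerator via $(\sqrt{\lambda+t}-\sqrt{\lambda})(\sqrt{\lambda+t}+\sqrt{\lambda})=t$. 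From this second form the bounds in \eqref{eq:phi_positive_small1} are immediate: the fraction subtracted from $1$ is non-negative and vanishes only at $t=0$, while $\phi(t)>0$ is visible directly from the first form.

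Next, for \eqref{eq:bound_on_phi}, I would split into $t\leqslant 1$, where $\phi(t)\leqslant 1\leqslant \sqrt{2}\,(1+t)^{-1/2}$, and $t\geqslant 1$, where
\[
\phi(t)\;\leqslant\;\frac{4\pi\alpha+\sqrt{\lambda}}{\sqrt{\lambda+t}}\;\leqslant\;\frac{4\pi\alpha+\sqrt{\lambda}}{\sqrt{t}}\;\lesssim\;(1+t)^{-1/2}\,.
\]
Strict monotonicity comes from a single differentiation of the first expression:
\[
\phi'(t)\;=\;-\,\frac{4\pi\alpha+\sqrt{\lambda}}{2\sqrt{\lambda+t}\,(4\pi\alpha+\sqrt{\lambda+t})^2}\;<\;0\qquad(t\geqslant 0).
\]

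Finally, the asymptotics \eqref{eq:asymptotics_phi} I would obtain by factoring $\sqrt{t}$ out of $\sqrt{\lambda+t}=\sqrt{t}\,(1+\lambda/(2t)+O(t^{-2}))$ and expanding the geometric series
\[
\frac{1}{4\pi\alpha+\sqrt{\lambda+t}}\;=\;\frac{1}{\sqrt{t}}\,\frac{1}{1+(4\pi\alpha+\lambda/(2\sqrt{t}))/\sqrt{t}+O(t^{-1})}\;=\;\frac{1}{\sqrt{t}}-\frac{4\pi\alpha}{t}+O(t^{-3/2})\,,
\]
which multiplied by $4\pi\alpha+\sqrt{\lambda}$ gives exactly the stated expansion. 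None of these steps presents a real obstacle; the only point to be slightly careful with is tracking the $O(t^{-3/2})$ error uniformly for $\alpha$ in any bounded set, which follows from the smoothness of the expansion of $\sqrt{1+\lambda/t}$ at $t=+\infty$.
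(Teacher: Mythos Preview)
Your proof is correct and complete. The paper itself omits the proof of this lemma, stating that these are ``a few features, whose proof is elementary and will be omitted''; your direct verification is exactly the kind of elementary argument the authors had in mind.
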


We turn now to the discussion of a relevant connection between the constant $q_f$ defined in \eqref{eq:qs} and the function
\begin{equation}\label{eq:F}
\qquad F_f\;:=\;f+w_f\,.
\end{equation}
In fact, owing to Proposition \ref{lem:w_in_Hs}, when $f\in H^s(\mathbb{R}^3)$ so is $w_f$, and hence $F_f$ too. When $s>\frac{3}{2}$, a standard Sobolev lemma implies that $F_f$ is continuous. We shall now see that, in this regime of $s$, $F_f(0)$ is a multiple of $q_f$. 
Significantly, an analogous property survives when $s=\frac{3}{2}$ (see Proposition \ref{prop:high_s_in_Ds}(ii) in the next Section).

\begin{lemma}\label{lem:qf_controlledby_F0}
For given $\alpha\geqslant 0$, $\lambda>0$, $s\in(\frac{3}{2},2)$, and $f\in H^s(\mathbb{R}^3)$, let $w_f$ and $q_f$ be, respectively, the function and the constant defined in \eqref{eq:w} and \eqref{eq:qs}, and let $F_f$ be the function \eqref{eq:F}. 
Then $F_f$ is continuous and
\begin{equation}\label{eq:F0_qf}
F(0)\;=\;(\alpha+{\textstyle\frac{\sqrt{\lambda}}{4\pi}})\,q_f\,.
\end{equation}
\end{lemma}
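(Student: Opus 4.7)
The plan is to compute $F_f(0) = f(0) + w_f(0)$ by inverting the Fourier transform of each piece and combining them through the identity for $\phi$. Since $s > 3/2$, Proposition \ref{lem:w_in_Hs} gives $w_f \in H^s(\mathbb{R}^3)$, whence $F_f = f + w_f \in H^s(\mathbb{R}^3)\hookrightarrow C_b(\mathbb{R}^3)$ by Sobolev embedding, and $F_f(0)$ is well-defined. Moreover $\widehat{F_f}\in L^1(\mathbb{R}^3)$ since $\int|\widehat{F_f}|\,dp \leq \|(1+|p|^2)^{-s/2}\|_2\,\|(1+|p|^2)^{s/2}\widehat{F_f}\|_2 < +\infty$ (the weight is square-integrable precisely because $s>3/2$), so $F_f(0) = (2\pi)^{-3/2}\int \widehat{F_f}(p)\,dp$.

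For $f(0)$, the plan is to insert in $\widehat{f}(p) = (p^2+\lambda)^{-s/2}\cdot(p^2+\lambda)^{s/2}\widehat{f}(p)$ the integral representation
\[
(p^2+\lambda)^{-s/2}\;=\;\frac{\sin(s\pi/2)}{\pi}\int_0^{+\infty}\!\frac{t^{-s/2}}{\,p^2+\lambda+t\,}\,dt\,,
\]
a special case of \eqref{eq:useful_int} with $a = 1-s/2\in(0,1)$, then integrate in $p$, swap the two integrals by Fubini, and recognise the expression \eqref{eq:cgeneral-Fourier} of $c_f$ inside. This gives
\[
f(0)\;=\;\frac{\sin(s\pi/2)}{\pi}\int_0^{+\infty}\!t^{-s/2}\,c_f(t)\,dt\,.
\]
For $w_f(0)$, I would invert \eqref{eq:w} in Fourier, swap $\int dp$ and $\int dt$ by Fubini, and compute the elementary three-dimensional integral
\[
\int_{\mathbb{R}^3}\!\frac{dp}{(p^2+\lambda)(p^2+\lambda+t)}\;=\;\frac{2\pi^2}{\,\sqrt{\lambda}+\sqrt{\lambda+t}\,}
\]
(by passing to spherical coordinates and the standard $\int_0^{+\infty}\!(r^2+a^2)^{-1}dr = \pi/(2a)$ after a partial-fraction step), obtaining
\[
w_f(0)\;=\;-\,\frac{\sin(s\pi/2)}{\pi}\int_0^{+\infty}\!\frac{t^{1-s/2}\,c_f(t)}{\,(4\pi\alpha+\sqrt{\lambda+t})(\sqrt{\lambda}+\sqrt{\lambda+t})\,}\,dt\,.
\]

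Adding the two and factoring $\frac{\sin(s\pi/2)}{\pi}\,t^{-s/2}\,c_f(t)$ from both integrands, the bracket inside the integral collapses, by Lemma \ref{lem:phi} and \eqref{eq:phi_double_form}, to
\[
1-\frac{t}{\,(4\pi\alpha+\sqrt{\lambda+t})(\sqrt{\lambda}+\sqrt{\lambda+t})\,}\;=\;\phi(t)\;=\;\frac{4\pi\alpha+\sqrt{\lambda}}{\,4\pi\alpha+\sqrt{\lambda+t}\,}\,,
\]
and one reads off
\[
F_f(0)\;=\;\frac{(4\pi\alpha+\sqrt{\lambda})\sin(s\pi/2)}{\pi}\int_0^{+\infty}\!\frac{t^{-s/2}\,c_f(t)}{\,4\pi\alpha+\sqrt{\lambda+t}\,}\,dt\;=\;\Big(\alpha+\frac{\sqrt{\lambda}}{4\pi}\Big)\,q_f\,,
\]
where the final equality is the definition \eqref{eq:qs} of $q_f$ (with constant $4\sin(s\pi/2) = 4\pi\cdot\sin(s\pi/2)/\pi$ reshuffled into the prefactor). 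The only technical point is the two applications of Fubini: absolute convergence holds because $s < 2$ makes $t^{-s/2}$ integrable near $0$, while for $t\to+\infty$ the bound $|c_f(t)|\lesssim(1+t)^{-1/4}$ of Lemma \ref{lem:ct_L2weighted}, combined with the extra $\sqrt{\lambda+t}$ decay in the denominators, makes everything integrable when $s>3/2$. This is expected to be the only obstacle, and it is a straightforward check.
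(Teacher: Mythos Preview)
Your proposal is correct and follows essentially the same approach as the paper's own proof: both compute $f(0)$ and $w_f(0)$ separately via Fourier inversion, insert the integral representation \eqref{eq:useful_int} for $(p^2+\lambda)^{-s/2}$, evaluate the elementary $p$-integral in the $w_f$ term, and then combine the two pieces using the identity \eqref{eq:phi_double_form} for $\phi(t)$ to recognise $(\alpha+\frac{\sqrt{\lambda}}{4\pi})\,q_f$. Your explicit justification of the Fubini steps via the decay bound \eqref{eq:ct_Linf} on $c_f$ is a welcome addition that the paper leaves implicit.
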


\begin{remark}\label{rem:qf_s2}
It is worth noticing that \eqref{eq:F0_qf} is consistent also when $s\to 2$. Indeed, when $s=2$ and $f\in H^2(\mathbb{R}^3)$, then $w_f\equiv 0$, owing to \eqref{eq:w}, whence $F_f(0)=f(0)$. On the r.h.s.~of \eqref{eq:F0_qf}, we re-write $q_f$ given by \eqref{eq:qs} as
\begin{equation*}
q_f\;=\;\frac{\sin\frac{s\pi}{2}}{\pi}\int_0^{+\infty}\!\ud t\,\frac{t^{-\frac{s}{2}}\,c_f(t)}{\,\alpha+\frac{\sqrt{\lambda+t}}{4\pi}\,}\,.
\end{equation*}
As $s\to 2$ the pre-factor in front of the integral vanishes asymptotically as $(1-\frac{s}{2})$, whereas the integral diverges: indeed when $s=2$ we see from \eqref{eq:cgeneral-Fourier} that $c_f(t)\to f(0)$ as $t\to 0$, therefore when $s\to 2$ the leading (i.e., divergent) part of the integral is given by the integration around $t=0$, i.e.,
\[
\begin{split}
\int_0^{+\infty}\!\ud t\,\frac{t^{-\frac{s}{2}}\,c_f(t)}{\,\alpha+\frac{\sqrt{\lambda+t}}{4\pi}\,}\;&\approx\;(\alpha+{\textstyle\frac{\sqrt{\lambda}}{4\pi}})^{-1}f(0)\int_0^1  \ud t\,t^{-s/2} \\
&=\;(\alpha+{\textstyle\frac{\sqrt{\lambda}}{4\pi}})^{-1}{\textstyle(1-\frac{s}{2})^{-1}}f(0)\qquad\textrm{ as }s\to 2\,.
\end{split}
\]
Thus, $(\alpha+{\textstyle\frac{\sqrt{\lambda}}{4\pi}})\,q_f\to F_f(0)$ as $s\to 2$.
\end{remark}

\begin{proof}[Proof of Lemma \ref{lem:qf_controlledby_F0}]
We have already argued before stating the Lemma that $F_f$ is continuous.

Since $f\in H^s(\mathbb{R}^3)$ for $s>\frac{3}{2}$, then $\widehat{f}\in L^1(\mathbb{R}^3)$ and
\[
\begin{split}
f(0)\;&=\;\frac{1}{\:(2\pi)^{\frac{3}{2}}}\int_{\mathbb{R}^3}\!\ud p\,\widehat{f}(p)\;=\;\frac{1}{\:(2\pi)^{\frac{3}{2}}}\int_{\mathbb{R}^3}\!\ud p\,\widehat{f}(p)\;\frac{\sin\frac{s\pi}{2}}{\pi}\!\int_0^{+\infty}\!\!\!\ud t\,\frac{t^{-\frac{s}{2}}(p^2+\lambda)^{\frac{s}{2}}}{p^2+\lambda+t} \\
&=\;\frac{\sin\frac{s\pi}{2}}{\pi}\!\int_0^{+\infty}\!\!\!\ud t\,t^{-\frac{s}{2}}\,c_f(t)\,,
\end{split}
\]
having used \eqref{eq:useful_int} in the second identity and \eqref{eq:cgeneral-Fourier} in the third one.

Also $w_f\in H^s(\mathbb{R}^3)$ for $s>\frac{3}{2}$, owing to Proposition \ref{lem:w_in_Hs}, and hence $\widehat{w_f}\in L^1(\mathbb{R}^3)$; from this fact and from \eqref{eq:w} one obtains
\[
\begin{split}
w_f(0)\;&=\;\frac{1}{\:(2\pi)^{\frac{3}{2}}}\int_{\mathbb{R}^3}\!\ud p\:\widehat{w_f}(p) \\
&=\;-\frac{\,4\sin\frac{s\pi}{2}}{\:(2\pi)^3}\int_0^{+\infty}\!\!\!\ud t\,\frac{t^{1-\frac{s}{2}}\,c_f(t)}{\,4\pi\alpha+\sqrt{\lambda+t}\,}\int_{\mathbb{R}^3}\!\frac{\ud p}{\,(p^2+\lambda+t)(p^2+\lambda)\,} \\
&=\;-\frac{\,\sin\frac{s\pi}{2}}{\pi}\int_0^{+\infty}\!\!\!\ud t\,\frac{t^{1-\frac{s}{2}}\,c_f(t)}{\,(4\pi\alpha+\sqrt{\lambda+t}\,)(\sqrt{\lambda+t}+\sqrt{\lambda})\,} \\
&=\;-\frac{\sin\frac{s\pi}{2}}{\pi}\!\int_0^{+\infty}\!\!\!\ud t\,t^{-\frac{s}{2}}\,c_f(t)+\frac{\sin\frac{s\pi}{2}}{\pi}\!\int_0^{+\infty}\!\!\!\ud t\,t^{-\frac{s}{2}}\,c_f(t)\,\phi(t)\,,
\end{split}
\]
where we used \eqref{eq:phi_double_form} for  $\phi(t)$.

Combining the last two equations, and using \eqref{eq:phi_double_form} and \eqref{eq:qs}, one obtains
\[
\begin{split}
F_f(0)\;&=\;f(0)+w_f(0)\;=\;(4\pi\alpha+\sqrt{\lambda}\,)\,\frac{\sin{\textstyle \frac{s\pi}{2}}}{\pi}\int_0^{+\infty}\!\!\!\ud t\,\frac{t^{-\frac{s}{2}}\,c_f(t)}{\,4\pi\alpha+\sqrt{\lambda+t}\,} \\
&=\;(\alpha+{\textstyle\frac{\sqrt{\lambda}}{4\pi}})\,q_f\,,
\end{split}
\]
thus proving \eqref{eq:F0_qf}.
\end{proof}

\section{Subspaces of $\mathcal{D}((-\Delta_\alpha)^{s/2})$}\label{sec:subspaces_of_Ds}

In this Section we show that in the regime $s\in(0,\frac{3}{2})$ the domain of the fractional operator $(-\Delta_\alpha)^{s/2}$ contains two noticeable subspaces: the one-dimensional span of the Green function $G_\lambda$ defined in \eqref{eq:defGlambda} and the Sobolev space $H^s(\mathbb{R}^3)$. We also show that in the remaining regime $s\in[\frac{3}{2},2)$ none of these spaces is entirely contained in $\mathcal{D}((-\Delta_\alpha)^{s/2})$ -- however, there is a proper subspace of $H^s(\mathbb{R}^3)\dotplus\mathrm{span}\{G_\lambda\}$ which is instead part of $\mathcal{D}((-\Delta_\alpha)^{s/2})$.

As a consequence, recalling that $G_\lambda\in H^{\frac{1}{2}-}(\mathbb{R}^3)$, we will conclude that
\begin{equation}\label{eq:Ds_supset_Hs_Gl}
\mathcal{D}((-\Delta_\alpha)^{s/2})\;\supset\;H^s(\mathbb{R}^3)\dotplus\mathrm{span}\{G_\lambda\}\,,\qquad s\in[{\textstyle\frac{1}{2}},{\textstyle\frac{3}{2}})\,,
\end{equation}
and
\begin{equation}
\mathcal{D}((-\Delta_\alpha)^{s/2})\;\supset\;H^s(\mathbb{R}^3)\,,\qquad s\in(0,{\textstyle\frac{1}{2}})\,.
\end{equation}

%

The first two main results of this Section are formulated as follows.

\begin{proposition}\label{prop:eq:Dalph_on_Glam}
For given $\alpha\geqslant 0$, $\lambda>0$, and $s\in(0,2)$, one has
\begin{equation}\label{eq:Dalph_on_Glam}
(-\Delta_\alpha+\lambda\mathbbm{1})^{s/2}G_\lambda\;=\;J_\lambda
\end{equation}
in the distributional sense, where $G_\lambda$ is the function defined in \eqref{eq:defGlambda} and $J_\lambda$ is the function defined by \eqref{eq:defJ}-\eqref{eq:defJ-phi}. In particular,
\begin{equation}\label{eq:Dalph_on_Glam2}
G_\lambda\in\mathcal{D}((-\Delta_\alpha)^{s/2})\qquad\Leftrightarrow\qquad s\in(0,\textstyle\frac{3}{2})\,,
\end{equation}
in which case
\begin{equation}\label{eq:Gl_L2norm}
\|(-\Delta_\alpha+\lambda\mathbbm{1})^{s/2}G_\lambda\|_2\;\lesssim\;1+\alpha\,.
\end{equation}
\end{proposition}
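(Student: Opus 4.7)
The plan is to first derive the distributional identity \eqref{eq:Dalph_on_Glam} from the integral formula \eqref{eq:fract_pow_formula} just proven in the previous section, and then to determine the range of $s$ for which $J_\lambda$ is an $L^2$-function. Applying \eqref{eq:fract_pow_formula} to $\varphi = G_\lambda \in L^2$ requires the explicit value of $\kappa_{G_\lambda}(t) = \langle G_{\lambda+t}, G_\lambda\rangle$; by Plancherel and a partial-fraction evaluation of $\int_{\mathbb{R}^3}\ud p/((p^2+a)(p^2+b)) = 2\pi^2/(\sqrt{a}+\sqrt{b})$, this equals $1/(4\pi(\sqrt{\lambda}+\sqrt{\lambda+t}))$. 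Taking Fourier transforms in \eqref{eq:fract_pow_formula} and rewriting the leading term via \eqref{eq:x-integral_id} as
\[
(p^2+\lambda)^{s/2-1}\;=\;\frac{\sin(s\pi/2)}{\pi}\int_0^{+\infty}\!\frac{t^{s/2-1}}{p^2+\lambda+t}\,\ud t\,,
\]
the two contributions merge into a single integral in $t$ whose bracketed factor is exactly $1-t/((4\pi\alpha+\sqrt{\lambda+t})(\sqrt{\lambda+t}+\sqrt{\lambda})) = \phi(t)$ by \eqref{eq:phi_double_form} of Lemma \ref{lem:phi}. This matches the definition \eqref{eq:defJ}--\eqref{eq:defJ-phi} of $\widehat{J_\lambda}$, proving \eqref{eq:Dalph_on_Glam}.

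For the membership statement, $G_\lambda \in \mathcal{D}((-\Delta_\alpha)^{s/2})$ if and only if $J_\lambda \in L^2(\mathbb{R}^3)$, and both \eqref{eq:Dalph_on_Glam2} and \eqref{eq:Gl_L2norm} reduce to computing $\|J_\lambda\|_2^2 = \|\widehat{J_\lambda}\|_2^2$. Squaring the Fourier representation, exchanging orders of integration by Tonelli (the integrand is non-negative), and evaluating the inner $p$-integral again by the partial-fraction formula above, I obtain
\[
\|J_\lambda\|_2^2\;=\;C_s\!\int_0^{+\infty}\!\!\!\int_0^{+\infty}\!\frac{t^{s/2-1}\,t'^{\,s/2-1}\,\phi(t)\,\phi(t')}{\sqrt{\lambda+t}+\sqrt{\lambda+t'}}\,\ud t\,\ud t'
\]
for an explicit $C_s > 0$. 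Changing variables to $v = \sqrt{\lambda+t}$, $v' = \sqrt{\lambda+t'}$ and using the pointwise bound $\phi(t) \leq (4\pi\alpha+\sqrt{\lambda})/v$ from Lemma \ref{lem:phi} together with $v+v' \geq 2\sqrt{vv'}$, the whole expression is dominated by $(1+\alpha)^2$ times the square of the single-variable integral
\[
\int_{\sqrt{\lambda}}^{+\infty}\! (v^2-\lambda)^{s/2-1}\,v^{-1/2}\,\ud v\,,
\]
whose integrand is $\sim (v-\sqrt{\lambda})^{s/2-1}$ near $v = \sqrt{\lambda}$ (integrable for every $s > 0$) and $\sim v^{s-5/2}$ at infinity (integrable iff $s < 3/2$). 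This yields \eqref{eq:Gl_L2norm} in the range $s \in (0, 3/2)$.

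To close the ``only if'' in \eqref{eq:Dalph_on_Glam2}, I would show that for $s \in [3/2, 2)$ the double integral diverges by testing it on the dyadic squares $t, t' \in [T, 2T]$ with $T \to +\infty$: the integrand is of order $(1+\alpha)^2\, T^{s-7/2}$ there, so the contribution of a square of area $T^2$ is of order $(1+\alpha)^2\,T^{s-3/2}$, whose sum over dyadic levels $T = 2^k$ diverges precisely when $s \geq 3/2$. The main obstacle is the sharp identification of this threshold: the naive estimate $|\widehat{J_\lambda}(p)| \leq C(p^2+\lambda)^{s/2-1}$ obtained by ignoring $\phi$ only yields $J_\lambda \in L^2$ for $s < 1/2$, matching the elementary fact that $G_\lambda \in H^{1/2-}(\mathbb{R}^3)$. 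The sharp threshold $s < 3/2$ requires the genuinely stronger decay $\phi(t) \lesssim (1+\alpha)(1+t)^{-1/2}$ supplied by Lemma \ref{lem:phi}, which encodes the cancellation between $(-\Delta+\lambda)^{s/2}G_\lambda$ and the singular correction term in \eqref{eq:fract_pow_formula} and is the key mechanism behind the gain of a full unit of fractional regularity for $(-\Delta_\alpha+\lambda)^{s/2}G_\lambda$ over $(-\Delta+\lambda)^{s/2}G_\lambda$.
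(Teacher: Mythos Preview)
Your derivation of the distributional identity \eqref{eq:Dalph_on_Glam} matches the paper's proof essentially step by step: apply \eqref{eq:fract_pow_formula} to $\varphi=G_\lambda$, compute $\kappa_{G_\lambda}(t)=\big(4\pi(\sqrt{\lambda}+\sqrt{\lambda+t}\,)\big)^{-1}$ by Plancherel, rewrite $(p^2+\lambda)^{s/2-1}$ through \eqref{eq:x-integral_id}, and recognise the combined integrand as $t^{s/2-1}\phi(t)/(p^2+\lambda+t)$ via \eqref{eq:phi_double_form}. No difference here.

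For the $L^2$-membership \eqref{eq:Dalph_on_Glam2}--\eqref{eq:Gl_L2norm} you take a genuinely different route. The paper proves a separate lemma (Lemma~\ref{lem:Jlambda}) giving the \emph{pointwise} large-$|p|$ asymptotics of $\widehat{J_\lambda}(p)$ in three regimes ($s<1$, $s=1$, $s>1$), from which square-integrability, the norm bound, and the extra regularity statement \eqref{eq:gain_of_reg_DalphGlambd} all follow at once. You instead compute $\|\widehat{J_\lambda}\|_2^2$ directly as the double $t$-integral, decouple it via $v+v'\geqslant 2\sqrt{vv'}$ into the square of a one-variable integral, and read off convergence for $s<\tfrac32$ and divergence for $s\geqslant\tfrac32$ from the endpoint behaviour. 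Your argument is correct and arguably more economical for the bare $L^2$-question; the trade-off is that it does not deliver the pointwise asymptotics of $\widehat{J_\lambda}$, which the paper needs anyway for \eqref{eq:gain_of_reg_DalphGlambd} in Theorem~\ref{thm:fract_pow_formulas}(ii). Your dyadic lower bound for $s\geqslant\tfrac32$ is fine once you note that on $[T,2T]^2$ one has $\phi(t)\gtrsim(4\pi\alpha+\sqrt\lambda)\,T^{-1/2}$ and $\sqrt{\lambda+t}+\sqrt{\lambda+t'}\lesssim T^{1/2}$, so the integrand is bounded \emph{below} by a constant times $T^{s-7/2}$; the order-of-magnitude reasoning you sketch then goes through.
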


\begin{proposition}\label{prop:Hs_in_domDalph}
For given $\alpha\geqslant 0$,
\begin{itemize}
 \item[(i)] if $s\in(0,\frac{3}{2})$, then $H^s(\mathbb{R}^3)$ is a subspace of $\mathcal{D}((-\Delta_\alpha)^{s/2})$ and for every $\lambda>0$ and $F\in H^s(\mathbb{R}^3)$ one has
\begin{equation}\label{eq:equiv_Hs_norm}
\|(-\Delta_\alpha+\lambda\mathbbm{1})^{s/2}F\|_{L^2}\;\lesssim\|F\|_{H^s}\,;
\end{equation}
\item[(ii)] if $s\in[\frac{3}{2},2)$, then $H^s(\mathbb{R}^3)$ is not a subspace of $\mathcal{D}((-\Delta_\alpha)^{s/2})$.
\end{itemize}
\end{proposition}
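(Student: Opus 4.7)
The natural tool is the distributional identity of Theorem~\ref{thm:fract_pow_formulas}(i), established already at the end of Section~\ref{sec:canonical-decomposition}: for every $F \in L^2(\mathbb{R}^3)$,
\[
 (-\Delta_\alpha + \lambda\mathbbm{1})^{s/2} F \;=\; (-\Delta + \lambda\mathbbm{1})^{s/2} F \;-\; B_F
\]
distributionally, where
\[
B_F(x)\;=\;4\sin{\textstyle\frac{s\pi}{2}}\int_0^{+\infty}\!\ud t\,\frac{t^{s/2}\,\kappa_F(t)}{4\pi\alpha+\sqrt{\lambda+t}}\,\frac{e^{-\sqrt{\lambda+t}\,|x|}}{4\pi|x|}
\]
and $\kappa_F$ is as in \eqref{eq:kappa_phi}. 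Since the first summand on the right is an $L^2$-function of norm $\approx \|F\|_{H^s}$ whenever $F \in H^s$, both statements of the Proposition reduce to the question of whether $B_F$ lies in $L^2$ and, when it does, with what norm.

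\textbf{Part (i).} For $F\in H^s(\mathbb{R}^3)$ with $s\in(0,3/2)$ I would first observe that $\kappa_F=c_f$ with $f:=(-\Delta+\lambda\mathbbm{1})^{-s/2}F$: indeed $\widehat f(p)=(p^2+\lambda)^{-s/2}\widehat F(p)$, so that $(p^2+\lambda)^{s/2}\widehat f=\widehat F$ and the two definitions of the kernel coming from \eqref{eq:cgeneral-Fourier} coincide. A direct check gives $\|f\|_{H^s}\approx\|F\|_{L^2}\leqslant\|F\|_{H^s}$, so Lemma~\ref{lem:ct_L2weighted} supplies the weighted bound $\int_0^{+\infty}t^{-1/2}|\kappa_F(t)|^2\,\ud t\lesssim\|F\|_{H^s}^2$. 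Passing to the Fourier side and exploiting the elementary inequality $(4\pi\alpha+\sqrt{\lambda+t})^{-1}\leqslant(\lambda+t)^{-1/2}$ (which renders every subsequent estimate uniform in $\alpha\geqslant 0$), I would split the radial variable $\varrho=|p|$ into a low-frequency piece $\varrho\leqslant 1$, controlled by a direct Cauchy--Schwarz against the weight $\int_0^{+\infty}t^{s+1/2}(\lambda+t)^{-3}\,\ud t$ (finite precisely for $s<3/2$), and a high-frequency piece $\varrho>1$, whose $L^2$-norm is in turn dominated by the $L^2\to L^2$ operator-norm of an integral operator of the form $Q_{\beta,\gamma,\delta}$ of Corollary~\ref{cor:schur} (with $\gamma=2$, $\delta=1$ and $\beta$ depending linearly on $s$) acting on $\mu(t):=t^{-1/4}\kappa_F(t)\in L^2(\mathbb{R}^+)$. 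This yields $\|B_F\|_{L^2}\lesssim\|F\|_{H^s}$ throughout $s\in(0,3/2)$, giving both the inclusion $H^s\subset\mathcal{D}((-\Delta_\alpha)^{s/2})$ and the norm bound \eqref{eq:equiv_Hs_norm}.

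\textbf{Part (ii).} For $s\in[3/2,2)$ I would exhibit an explicit counterexample. Pick any $F\in C_c^\infty(\mathbb{R}^3)\subset H^s(\mathbb{R}^3)$ with $F(0)\neq 0$: then $\widehat F$ is Schwartz and $(2\pi)^{3/2}F(0)=\int\widehat F(p)\,\ud p\neq 0$. By dominated convergence applied to $\kappa_F(t)=(2\pi)^{-3/2}\int\widehat F(p)(p^2+\lambda+t)^{-1}\,\ud p$, one obtains $\kappa_F(t)=F(0)/t+o(1/t)$ as $t\to+\infty$. Substituting this leading asymptotic into the Fourier expression for $B_F$, performing the change of variable $t=|p|^2 u$, and invoking the Mellin identity \eqref{eq:useful_int} with $a=s/2-1/2\in(0,1)$, one extracts
\[
\widehat{B_F}(p)\;\sim\;C_{s,\alpha,\lambda}\,F(0)\,|p|^{s-3}\qquad\textrm{as }|p|\to+\infty,
\]
with a nonzero constant $C_{s,\alpha,\lambda}$. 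Consequently $\int_{\mathbb{R}^3}|\widehat{B_F}(p)|^2\,\ud p\gtrsim\int_1^{+\infty}r^{2s-4}\,\ud r$, which diverges on the whole range $s\in[3/2,2)$ (logarithmically at $s=3/2$, algebraically for $s>3/2$). Since $(-\Delta+\lambda\mathbbm{1})^{s/2}F\in L^2$ while $B_F\notin L^2$, the distributional identity above forces $(-\Delta_\alpha+\lambda\mathbbm{1})^{s/2}F\notin L^2$, hence $F\notin\mathcal{D}((-\Delta_\alpha)^{s/2})$.

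The main technical hurdle in part (i) is the correct choice of Schur weights so that Corollary~\ref{cor:schur} delivers a finite constant on the whole interval $s\in(0,3/2)$ uniformly in $\alpha\geqslant 0$; in part (ii) the delicate point is the rigorous extraction of the $|p|^{s-3}$ leading term, achieved by separating the $t$-integration into a bounded piece (which produces an $L^2$-regular summand of $\widehat{B_F}$) and the tail where $\kappa_F(t)\sim F(0)/t$ drives the non-integrable high-frequency behaviour.
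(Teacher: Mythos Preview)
Your overall architecture---reduce everything to the $L^2$ properties of $B_F$ via Theorem~\ref{thm:fract_pow_formulas}(i)---matches the paper. But part~(i) as you sketch it has a genuine gap, while part~(ii) is correct and in fact more direct than the paper's argument.

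\textbf{Part (i): the Schur step cannot close with your choice of $\mu$.} You invoke Lemma~\ref{lem:ct_L2weighted} with $f=(-\Delta+\lambda\mathbbm{1})^{-s/2}F$, obtaining $\int_0^{+\infty} t^{-1/2}|\kappa_F(t)|^2\,\ud t\lesssim\|f\|_{H^s}^2\approx\|F\|_{L^2}^2$. This is the only information your $\mu(t)=t^{-1/4}\kappa_F(t)$ carries; the $H^s$-regularity of $F$ never enters. The kernel you then need to bound on $L^2(\mathbb{R}^+)$ is (after the obvious reductions)
\[
K(\varrho,t)\;=\;\frac{\varrho\,t^{s/2+1/4}}{\sqrt{\lambda+t}\,(\varrho^2+\lambda+t)}\;\lesssim\;\frac{\varrho\,t^{s/2-1/4}}{\varrho^2+t}\,.
\]
For this to be $Q_{\beta,2,1}$ you would need simultaneously $\beta=s/2-\frac14$ (from the $t$-exponent) and $\frac12-2\beta=1$ (from the $\varrho$-exponent), which forces $s=0$. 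For $s>0$ the kernel is genuinely unbounded on $L^2$: testing on $\mu_N(t)=t^{-1/4}\mathbf{1}_{[1,N]}(t)$ gives $\|K\mu_N\|_2^2/\|\mu_N\|_2^2\gtrsim N^{s}\to\infty$. No choice of $\beta$ rescues this, because the homogeneity of $K$ is off by exactly $s$.

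The paper fixes this by first proving a new weighted estimate (its Lemma preceding the proof of Proposition~\ref{prop:Hs_in_domDalph}),
\[
\int_0^{+\infty}(t+\lambda)^{s-\frac12}|\kappa_F(t)|^2\,\ud t\;\lesssim\;\|F\|_{H^s}^2\,,
\]
which does encode the full $H^s$-norm of $F$. Setting $\mu(t):=(t+\lambda)^{s/2-1/4}\kappa_F(t)$ then makes the $s$-dependence cancel in the kernel, leaving exactly $Q_{-1/4,2,1}(\varrho,t)=\varrho\,t^{-1/4}/(\varrho^2+t)$, which is bounded with an $s$-independent constant. This is the missing idea in your sketch.

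\textbf{Part (ii): correct, and more explicit than the paper.} The paper argues indirectly: if $H^s\subset\mathcal{D}((-\Delta_\alpha)^{s/2})$, then the canonical decomposition $g=f_g+q_{f_g}G_\lambda+w_{f_g}$ with $f_g,w_{f_g}\in H^s$ would force $G_\lambda\in\mathcal{D}((-\Delta_\alpha)^{s/2})$, contradicting Proposition~\ref{prop:eq:Dalph_on_Glam}. Your counterexample route---take $F\in C_c^\infty$ with $F(0)\neq 0$, use $\kappa_F(t)=F(0)/t+O(t^{-2})$ to extract $\widehat{B_F}(p)\sim C\,F(0)\,|p|^{s-3}$---is valid and self-contained; the error terms you mention do indeed contribute $O(|p|^{-2})$ pieces in $L^2$, and the leading constant is a nonzero multiple of $\tan\frac{s\pi}{2}$. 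This gives the same conclusion without appealing to the structure of $G_\lambda$ inside the fractional domain.
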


The third main result of this Section will be discussed later, see Proposition \ref{prop:high_s_in_Ds} below. In order to prove Proposition \ref{prop:eq:Dalph_on_Glam} we establish the following properties.

\begin{lemma}\label{lem:Jlambda}
For given $\alpha\geqslant 0$, $\lambda>0$, and $s\in(0,2)$, the function $J_\lambda$ defined by \eqref{eq:defJ}-\eqref{eq:defJ-phi} has real and bounded Fourier transform that satisfies
\begin{align}
\widehat{J_\lambda}(p)&\;=\;\frac{\kappa_s}{\:(p^2+\lambda)\,}\,(1+o(1))\,, &0<s<1\,, \label{eq:J-}\\
\widehat{J_\lambda}(p)&\;=\;\kappa_1\,\frac{\:\ln(p^2+\lambda+1)\:}{\:(p^2+\lambda)\,}\,(1+o(1))\,, &s=1\,, \label{eq:J1}\\
\widehat{J_\lambda}(p)&\;=\;\frac{\kappa_s}{\:(p^2+\lambda)^{\frac{3}{2}-\frac{s}{2}}}\,(1+o(1))\,, &1<s<2\,, \label{eq:J+}
\end{align}
as $|p|\to +\infty$, where $\kappa_s>0$ depends only on $s$ (as well as on $\alpha$ and $\lambda$). As a consequence, $J_\lambda$ belongs to $L^2(\mathbb{R}^3)$ if and only if $s\in(0,\frac{3}{2})$. When this is the case,
\begin{equation}\label{eq:Jl_L2norm}
\|J_\lambda\|_2\;\lesssim\;1+\alpha\,,
\end{equation}
and moreover
\begin{equation}\label{eq:gain_of_reg_DalphGlambd}
J_\lambda\in H^{\sigma-}(\mathbb{R}^3)\,,\qquad \sigma:=\min\{{\textstyle\frac{3}{2}-s,\frac{1}{2}}\}\,,\qquad s\in(0,{\textstyle\frac{3}{2}})\,.
\end{equation}
\end{lemma}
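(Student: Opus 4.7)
The plan is to read off all the claimed properties directly from the integral representation
\[
\widehat{J_\lambda}(p)\;=\;\frac{\sin\frac{s\pi}{2}}{\pi(2\pi)^{\frac{3}{2}}}\int_0^{+\infty}\!\ud t\,\frac{t^{\frac{s}{2}-1}\phi(t)}{\,p^2+\lambda+t\,}\,,
\]
using systematically the two-sided control $0<\phi(t)\leqslant\min\{1,(4\pi\alpha+\sqrt{\lambda})/\sqrt{t}\,\}$ that follows from Lemma~\ref{lem:phi}. Realness of $\widehat{J_\lambda}$ is immediate; global boundedness on $\mathbb{R}^3$ follows by inserting $\phi\leqslant 1$ into the integral identity \eqref{eq:useful_int} with $a=s/2$ and $R=p^2+\lambda$, giving $|\widehat{J_\lambda}(p)|\lesssim(p^2+\lambda)^{s/2-1}$, which is bounded for $s\in(0,2)$.

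For the large-$|p|$ asymptotics I set $R:=p^2+\lambda$ and treat the three regimes separately. For $s\in(0,1)$ I rewrite $\widehat{J_\lambda}(p)=R^{-1}\!\int_0^{+\infty}t^{s/2-1}\phi(t)(1+t/R)^{-1}\,\ud t$, observe that $t^{s/2-1}\phi(t)$ is integrable on $\mathbb{R}^+$ (at zero because $s>0$, at infinity because $\phi(t)\lesssim (4\pi\alpha+\sqrt\lambda)/\sqrt t$ and $s<1$), and invoke dominated convergence to reach \eqref{eq:J-}. For $s=1$ the density $t^{-1/2}\phi(t)$ is no longer integrable, decaying like $(4\pi\alpha+\sqrt\lambda)/t$ at infinity; splitting $\int_0^{+\infty}=\int_0^R+\int_R^{+\infty}$ and using $R+t\approx R$ on the first piece, $R+t\approx t$ on the second, isolates the leading $R^{-1}\ln R$ behavior of \eqref{eq:J1}. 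For $s\in(1,2)$ I rescale $t=Ru$, obtaining
\[
\widehat{J_\lambda}(p)\;=\;\frac{\sin\frac{s\pi}{2}}{\pi(2\pi)^{\frac{3}{2}}}\,R^{\frac{s}{2}-\frac{3}{2}}\int_0^{+\infty}\!\sqrt{R}\,\phi(Ru)\,\frac{u^{s/2-1}}{1+u}\,\ud u\,,
\]
exploit the pointwise limit $\sqrt R\,\phi(Ru)\to(4\pi\alpha+\sqrt\lambda)\,u^{-1/2}$ together with the uniform-in-$R$ dominating function $(4\pi\alpha+\sqrt\lambda)u^{-1/2}$ (coming from $\phi(Ru)\leqslant(4\pi\alpha+\sqrt\lambda)/\sqrt{Ru}$), and recognize the resulting $\int_0^{+\infty}u^{s/2-3/2}(1+u)^{-1}\,\ud u$ as a convergent beta-type integral (well-defined exactly when $1<s<2$), yielding \eqref{eq:J+}.

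Membership $J_\lambda\in L^2(\mathbb{R}^3)$ follows directly from the asymptotics by integration in spherical coordinates: convergence at infinity is equivalent to $s<3/2$ in every regime, the critical case being $1<s<3/2$ where $|\widehat{J_\lambda}|^2\sim R^{s-3}$. For the quantitative bound \eqref{eq:Jl_L2norm} the only source of $\alpha$ is the tail of the $t$-integral; I would split at $t=p^2+\lambda$, use $\phi\leqslant 1$ on the inner portion and $\phi(t)\leqslant(4\pi\alpha+\sqrt\lambda)/\sqrt t$ on the outer one, arriving at $|\widehat{J_\lambda}(p)|\lesssim(1+\alpha)(p^2+\lambda)^{\max\{-1,\,s/2-3/2\}}$, whose square integrates over $\mathbb{R}^3$ to give the claim.

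Finally, \eqref{eq:gain_of_reg_DalphGlambd} is obtained by inserting the extra Fourier weight $(1+|p|^2)^\sigma$ into $\|\widehat{J_\lambda}\|_2^2$: in the regime $1<s<3/2$ the decay $|\widehat{J_\lambda}|^2\sim R^{s-3}$ admits the weight provided $\sigma<3/2-s$, while for $s\in(0,1]$ the decay $|\widehat{J_\lambda}|^2\sim R^{-2}$ (with an extra logarithm at $s=1$) admits $\sigma<1/2$; combining, $\sigma<\min\{3/2-s,1/2\}$. The main technical obstacle is the delicate transitional case $s=1$ together with the uniform bookkeeping of the $\alpha$-dependence across all three regimes; both are handled consistently by the splitting $\phi(t)\leqslant\min\{1,(4\pi\alpha+\sqrt\lambda)/\sqrt t\}$ and by careful tracking of the powers of $R=p^2+\lambda$ produced by the various beta- and gamma-type integrals.
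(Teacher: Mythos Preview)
Your proposal is essentially correct and follows the same strategy as the paper: read off boundedness and the three asymptotic regimes from the integral representation via the two-sided control on $\phi$, then deduce $L^2$-membership and Sobolev regularity from the asymptotics. Your rescaling $t=Ru$ for $s\in(1,2)$ is a clean alternative to the paper's splitting at $t=1$ combined with the limit formula \eqref{eq:useful_int-limit}; both arrive at the same constant $\kappa_s=-(2\pi)^{-3/2}(4\pi\alpha+\sqrt\lambda)\tan\frac{s\pi}{2}$. Likewise, your split at $t=R$ for $s=1$ works just as well as the paper's split at $t=1$.

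There is, however, a genuine slip in your argument for \eqref{eq:Jl_L2norm}. Splitting at $t=R=p^2+\lambda$ and using $\phi\leqslant 1$ on the inner portion gives
\[
\int_0^R\frac{t^{s/2-1}}{R+t}\,\ud t\;=\;R^{s/2-1}\int_0^1\frac{u^{s/2-1}}{1+u}\,\ud u\;\approx\;R^{s/2-1},
\]
while the outer portion contributes $(1+\alpha)R^{s/2-3/2}$. Since $s/2-1>s/2-3/2$, the inner term dominates for large $R$, so your method yields only $|\widehat{J_\lambda}(p)|\lesssim R^{s/2-1}$, \emph{not} the claimed $(1+\alpha)R^{\max\{-1,\,s/2-3/2\}}$. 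The bound $R^{s/2-1}$ is square-integrable over $\mathbb{R}^3$ only for $s<\tfrac12$, so the argument fails precisely in the range $s\in[\tfrac12,\tfrac32)$ where the lemma is most interesting. The fix is simple: instead of crude $\phi\leqslant 1$ on $[0,R]$, use the uniform bound $\phi(t)\lesssim(1+\alpha)(1+t)^{-1/2}$ on the \emph{whole} half-line (this is what the paper does), or equivalently split at a fixed point $t=1$ rather than at $t=R$. Either way one obtains the correct pointwise control and hence \eqref{eq:Jl_L2norm}.
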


\begin{proof}
In the case $s\in(0,1)$, owing to \eqref{eq:phi_positive_small1}-\eqref{eq:bound_on_phi},
\[
\kappa_s\;:=\;\frac{\sin\frac{s\pi}{2}}{\pi(2\pi)^{\frac{3}{2}}}\int_0^{+\infty}\!\!\ud t\,t^{\frac{s}{2}-1}\,\phi(t)\;\lesssim\;\int_0^{+\infty}\!\!\ud t\,\frac{\,t^{\frac{s}{2}-1}\,}{(1+t)^{\frac{1}{2}}}\;<\;+\infty\,,
\]
whence
\[
(p^2+\lambda)\,\widehat{J_\lambda}(p)\;=\;\frac{\sin\frac{s\pi}{2}}{\pi(2\pi)^{\frac{3}{2}}}\int_0^{+\infty}\!\!\ud t\,t^{\frac{s}{2}-1}\,\phi(t)\,\frac{p^2+\lambda}{\,p^2+\lambda+t\,}\;\xrightarrow[]{\,|p|\to +\infty\,}\;\kappa_s
\]
by dominated convergence, which proves \eqref{eq:J-}.

In the case $s=1$,
\[
\widehat{J_\lambda}(p)\;=\;\frac{1}{\pi(2\pi)^{\frac{3}{2}}}\Big(\int_0^{1}\!\ud t\,\frac{t^{-\frac{1}{2}}\,\phi(t)}{\,p^2+\lambda+t\,}+\int_1^{+\infty}\!\!\ud t\,\frac{t^{-\frac{1}{2}}\,\phi(t)}{\,p^2+\lambda+t\,}\Big)\,.
\]
As $|p|\to +\infty$,
\[
 \int_0^{1}\!\ud t\,\frac{t^{-\frac{1}{2}}\,\phi(t)}{\,p^2+\lambda+t\,}\;\approx\;\frac{\mathrm{const.}}{\,p^2+\lambda\,}
\]
by \eqref{eq:phi_positive_small1} and dominated convergence, and
\[
\int_1^{+\infty}\!\!\ud t\,\frac{t^{-\frac{1}{2}}\,\phi(t)}{\,p^2+\lambda+t\,}\;\approx\;(4\pi\alpha+\sqrt{\lambda}\,)\int_1^{+\infty}\!\!\ud t\,\frac{t^{-1}\,}{\,p^2+\lambda+t\,}\;=\;(4\pi\alpha+\sqrt{\lambda}\,)\,\frac{\ln(p^2+\lambda+1)}{p^2+\lambda}
\]
by \eqref{eq:asymptotics_phi} and dominated convergence, which proves \eqref{eq:J1} with $\kappa_1:=\frac{4\pi\alpha+\sqrt{\lambda}}{\,\pi(2\pi)^{3/2}}$\,.

In the case $s\in(1,2)$,
\[
\widehat{J_\lambda}(p)\;=\;\frac{\sin\frac{s\pi}{2}}{\pi(2\pi)^{\frac{3}{2}}}\Big(\int_0^{1}\!\ud t\,\frac{t^{\frac{s}{2}-1}\,\phi(t)}{\,p^2+\lambda+t\,}+\int_1^{+\infty}\!\!\ud t\,\frac{t^{\frac{s}{2}-1}\,\phi(t)}{\,p^2+\lambda+t\,}\Big)\,.
\]
As $|p|\to +\infty$,
\[
 \int_0^{1}\!\ud t\,\frac{t^{-\frac{1}{2}}\,\phi(t)}{\,p^2+\lambda+t\,}\;\approx\;\frac{\mathrm{const.}}{\,p^2+\lambda\,}
\]
by \eqref{eq:phi_positive_small1} and dominated convergence, and
\[
\begin{split}
\frac{\sin\frac{s\pi}{2}}{\pi(2\pi)^{\frac{3}{2}}}\int_1^{+\infty}\!\!\ud t\,\frac{t^{\frac{s}{2}-1}\,\phi(t)}{\,p^2+\lambda+t\,}\;&\approx\;\frac{(4\pi\alpha+\sqrt{\lambda}\,)\sin\frac{s\pi}{2}}{\pi(2\pi)^{\frac{3}{2}}}\int_1^{+\infty}\!\!\ud t\,\frac{t^{\frac{s}{2}-\frac{3}{2}}\,}{\,p^2+\lambda+t\,} \\
&\approx\;\frac{(4\pi\alpha+\sqrt{\lambda}\,)\sin\frac{s\pi}{2}}{\pi(2\pi)^{\frac{3}{2}}}\int_0^{+\infty}\!\!\ud t\,\frac{t^{\frac{s}{2}-\frac{3}{2}}\,}{\,p^2+\lambda+t\,} \\
&=\;-\frac{(4\pi\alpha+\sqrt{\lambda}\,)\tan\frac{s\pi}{2}}{\,(2\pi)^{\frac{3}{2}}}\:\frac{1}{\,(p^2+\lambda)^{\frac{3}{2}-\frac{s}{2}}}
\end{split}
\]
by \eqref{eq:asymptotics_phi}, \eqref{eq:useful_int-limit}, and dominated convergence, which proves \eqref{eq:J+} with
\[
\kappa_s\;:=\;-(2\pi)^{-\frac{3}{2}}(4\pi\alpha+\sqrt{\lambda}\,)\tan\frac{s\pi}{2}>0\,.
\]

It is clear from the above arguments that in all cases $\widehat{J_\lambda}(p)$ is positive and uniformly bounded. Immediate consequences of the asymptotics \eqref{eq:J-}-\eqref{eq:J1}-\eqref{eq:J+} are the fact that $J_\lambda\in L^2(\mathbb{R}^3)$ if and only if $s\in(0,\frac{3}{2}$) and the gain of regularity \eqref{eq:gain_of_reg_DalphGlambd}. Then the point-wise bound
\begin{equation}
|\widehat{J_\lambda}(p)|\;\lesssim\;(1+\alpha) \:\frac{\sin\frac{s\pi}{2}}{\pi(2\pi)^{\frac{3}{2}}}\int_0^{+\infty}\!\!\ud t\,\frac{t^{\frac{s}{2}-1}}{\,(p^2+1+t)\sqrt{1+t}\,}
\end{equation}
yields immediately \eqref{eq:Jl_L2norm}.
\end{proof}

We can now prove Proposition \ref{prop:eq:Dalph_on_Glam}.

\begin{proof}[Proof of Proposition \ref{prop:eq:Dalph_on_Glam}]
By formula \eqref{eq:fract_pow_formula} of Theorem \ref{thm:fract_pow_formulas}(i), re-written in Fourier transform, we have
\[
((-\Delta_\alpha+\lambda\mathbbm{1})^{\frac{s}{2}}G_\lambda)\,\widehat{\;}\,(p)\;=\;((-\Delta+\lambda\mathbbm{1})^{\frac{s}{2}}G_\lambda)\,\widehat{\;}\,(p)\:+\:\widehat{\mathcal{I}_\lambda}(p)\,,
\]
where for convenience we set
\[
\widehat{\mathcal{I}_\lambda}(p)\;:=\;-\frac{\,4\sin{\textstyle\frac{s \pi}{2}}}{(2\pi)^{\frac{3}{2}}}\int_0^{+\infty}\!\ud t\; \frac{t^{\frac{s}{2}}\,\kappa_{G_\lambda}(t)}{\,4\pi\alpha+\sqrt{\lambda+t\,}\,}\,\frac{1}{\,p^2+\lambda+t\,}\,,
\]
and
$\kappa_{G_\lambda}$, given by \eqref{eq:kappa_phi}, is now computed as
\begin{equation*}
\kappa_{G_\lambda}(t)\;=\;\frac{1}{(2\pi)^3}\int_{\mathbb{R}^3}\ud p\,\frac{1}{(p^2+\lambda+t)(p^2+\lambda)}\;=\;\frac{1}{4\pi}\,\frac{1}{\sqrt{\lambda+t\,}+\sqrt{\lambda}}\,.
\end{equation*}
(Formula \eqref{eq:fract_pow_formula} is indeed usable here, because it has been already demonstrated, in the end of Section \ref{sec:canonical-decomposition}.) Thus,
\[
\begin{split}
\widehat{\mathcal{I}_\lambda}(p)\;&=\;-\frac{\,\sin{\textstyle\frac{s \pi}{2}}}{\pi(2\pi)^{\frac{3}{2}}}\int_0^{+\infty}\!\ud t\; \frac{t^{\frac{s}{2}-1}\,}{\,p^2+\lambda+t\,}+\frac{\sin\frac{s\pi}{2}}{\pi(2\pi)^{\frac{3}{2}}}\int_0^{+\infty}\!\!\ud t\,\frac{t^{\frac{s}{2}-1}\,\phi(t)}{\,p^2+\lambda+t\,}\,,
\end{split}
\]
where $\phi(t)$ is the function already introduced in \eqref{eq:defJ-phi} and \eqref{eq:phi_double_form}. Owing to \eqref{eq:useful_int},
\[
\frac{\,\sin{\textstyle\frac{s \pi}{2}}}{\pi(2\pi)^{\frac{3}{2}}}\int_0^{+\infty}\!\ud t\; \frac{t^{\frac{s}{2}-1}\,}{\,p^2+\lambda+t\,}\;=\;\frac{1}{\,(2\pi)^{\frac{3}{2}}}\,\frac{1}{\,(p^2+\lambda)^{1-\frac{s}{2}}\,}\;=\;((-\Delta+\lambda\mathbbm{1})^{\frac{s}{2}}G_\lambda)\,\widehat{\;}\,(p)\,,
\]
whereas, according to our definition \eqref{eq:defJ},
\[
\frac{\sin\frac{s\pi}{2}}{\pi(2\pi)^{\frac{3}{2}}}\int_0^{+\infty}\!\!\ud t\,\frac{t^{\frac{s}{2}-1}\,\phi(t)}{\,p^2+\lambda+t\,}\;=\;\widehat{J_\lambda}(p)\,.
\]
Therefore, $\widehat{\mathcal{I}_\lambda}(p)=-((-\Delta+\lambda\mathbbm{1})^{\frac{s}{2}}G_\lambda)\,\widehat{\;}\,(p)+\widehat{J_\lambda}(p)$, whence
\begin{equation*}
(-\Delta_\alpha+\lambda\mathbbm{1})^{s/2}G_\lambda\;=\;J_\lambda\,,
\end{equation*}
that is, the identity \eqref{eq:Dalph_on_Glam}.
As proved in Lemma \ref{lem:Jlambda}, $J_\lambda\in L^2(\mathbb{R}^3)$ $\Leftrightarrow$ $s\in(0,\frac{3}{2})$: thus,
$G_\lambda\in\mathcal{D}((-\Delta_\alpha)^{s/2})$ $\Leftrightarrow$ $s\in(0,\frac{3}{2})$, and \eqref{eq:Dalph_on_Glam2} follows.  \eqref{eq:Gl_L2norm} is then an immediate consequence of \eqref{eq:Jl_L2norm}.
\end{proof}

Let us now pass to the proof of Proposition \ref{prop:Hs_in_domDalph}.
First, we establish the following property.

\begin{lemma}
For given $\lambda>0$, $s\in(0,\frac{3}{2})$, and $F\in H^s(\mathbb{R}^3)$, let $\kappa_F(t)$ be the function defined in \eqref{eq:kappa_phi}, namely
\begin{equation}
\kappa_F(t)\;=\;\frac{1}{\,(2\pi)^\frac{3}{2}}\int_{\mathbb{R}^3}\!\ud p\,\frac{\widehat{F}(p)}{\,p^2+\lambda+t\,}\,.
\end{equation}
Then
\begin{equation}\label{eq:kF_L2weighted}
\int_0^{+\infty}\!\!\ud t\,\frac{|\kappa_F(t)|^2}{\,(t+\lambda)^{\frac{1}{2}-s}}\;\lesssim\;\|F\|_{H^s}^2\,.
\end{equation}
\end{lemma}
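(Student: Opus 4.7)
The plan is to reduce the desired weighted estimate to an $L^2$-boundedness of an integral operator on $\mathbb{R}^+$, just as was done in Lemma \ref{lem:ct_L2weighted} and in Propositions \ref{pro:h_in_Hs} and \ref{lem:w_in_Hs}, and then invoke the Schur-test bound from the Appendix. Writing $p=(\varrho,\omega)$ in polar coordinates and introducing
\[
\eta_\omega(\varrho)\;:=\;\varrho(\varrho^2+\lambda)^{\frac{s}{2}}\widehat{F}(\varrho,\omega)\,,
\]
one has $\int_{\mathbb{S}^2}\|\eta_\omega\|^2_{L^2(\mathbb{R}^+,\ud\varrho)}\,\ud\omega=\|(p^2+\lambda)^{s/2}\widehat{F}\|_2^2\approx\|F\|_{H^s}^2$, and
\[
\kappa_F(t)\;=\;\frac{1}{(2\pi)^{3/2}}\int_{\mathbb{S}^2}\!\ud\omega\int_0^{+\infty}\!\!\ud\varrho\,\frac{\varrho\,\eta_\omega(\varrho)}{(\varrho^2+\lambda)^{\frac{s}{2}}(\varrho^2+\lambda+t)}\,.
\]

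Next, I would bound the integrand in absolute value using $(\varrho^2+\lambda)^{s/2}\geqslant\varrho^s$ and then split $\int_0^{+\infty}\ud t$ into $[0,\lambda]$ and $[\lambda,+\infty)$. On $[0,\lambda]$ the weight $(t+\lambda)^{s-\frac{1}{2}}$ is uniformly $\lesssim\lambda^{s-\frac{1}{2}}$, and $|\kappa_F(t)|\leqslant\|G_{\lambda+t}\|_2\|F\|_2\lesssim\|F\|_2$ gives the trivial bound $\int_0^{\lambda}\ldots\lesssim\|F\|_{H^s}^2$. For $t\geqslant\lambda$ one has $(t+\lambda)^{s-\frac{1}{2}}\approx t^{s-\frac{1}{2}}$ and $\varrho^2+\lambda+t\approx\varrho^2+t$; then the substitution $t=\tau^2$ together with Cauchy–Schwarz in $\omega$ leaves us with the uniform-in-$\omega$ estimate
\[
\int_0^{+\infty}\!\!\ud\tau\,\Big|\!\int_0^{+\infty}\!\!\frac{\tau^{s}\,\varrho^{1-s}}{\varrho^2+\tau^2}\,|\eta_\omega(\varrho)|\,\ud\varrho\Big|^2\;\lesssim\;\|\eta_\omega\|^2_{L^2(\mathbb{R}^+,\ud\varrho)}\,.
\]

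This last inequality is precisely the $L^2$-boundedness of the integral operator $Q_{\beta,\gamma,\delta}$ of Corollary \ref{cor:schur} with parameters $\beta=s$, $\gamma=\delta=2$: indeed, the scaling $\varrho=\tau z$ turns the inner integral into $\int_0^{+\infty}\!\frac{z^{1-s}}{z^2+1}\eta_\omega(\tau z)\,\ud z$, so by Minkowski in $\tau$ the operator norm is controlled by $\int_0^{+\infty}\!\frac{z^{1/2-s}}{z^2+1}\,\ud z$, finite precisely when $-\tfrac12<s<\tfrac32$, in agreement with the hypothesis $s\in(0,\tfrac32)$. Integrating over $\omega\in\mathbb{S}^2$ then assembles the pieces to yield \eqref{eq:kF_L2weighted}.

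The only delicate point in this plan is ensuring that dropping the $\lambda$-shifts in the denominators (via $\varrho^2+\lambda\geqslant\varrho^2$ and $\varrho^2+\lambda+t\geqslant\varrho^2+t$) does not destroy the integrability at $\varrho=0$ or $t=0$: this is why the splitting at $t=\lambda$ is needed, and why the low-$t$ contribution is handled separately by the trivial $\|F\|_2$-bound rather than by Schur. The upper constraint $s<\tfrac32$ that emerges from the Schur integral $\int_0^{+\infty}\!\frac{z^{1/2-s}}{z^2+1}\,\ud z<+\infty$ is sharp and matches the asymptotic behaviour $|\kappa_F(t)|\sim t^{-1}$ that otherwise makes the weighted integral diverge.
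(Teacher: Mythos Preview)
Your argument is correct. One small slip: the kernel $\frac{\tau^s\varrho^{1-s}}{\varrho^2+\tau^2}$ corresponds to $Q_{\beta,2,2}$ with $\beta=\frac{1}{4}-\frac{s}{2}$, not $\beta=s$; but this is harmless, since your self-contained Minkowski/scaling computation correctly yields the finiteness condition $\int_0^{+\infty}\frac{z^{1/2-s}}{z^2+1}\,\ud z<\infty\iff s<\tfrac32$ and does not rely on that identification.

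The paper's proof follows the same skeleton (polar coordinates, reduction to a one-dimensional operator, Schur-type bound) but organises the splitting differently. Rather than cutting the $t$-integral at $t=\lambda$ for all $s$, the paper distinguishes $s\in(0,\tfrac12)$ from $s\in[\tfrac12,\tfrac32)$: in the first range the exponent $\tfrac14-\tfrac s2$ on $(t^2+\lambda)$ is positive, so $(t^2+\lambda)^{\frac14-\frac s2}\geqslant t^{\frac12-s}$ and the Schur bound for the kernel $\frac{t^s\varrho^{1-s}}{\varrho^2+t^2}$ applies directly with no $t$-splitting at all; in the second range that exponent is non-positive and the paper splits at $t=1$, handling small $t$ by a direct Cauchy--Schwarz estimate. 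Your uniform-in-$s$ split at $t=\lambda$ with the trivial $\|F\|_2$ bound on $[0,\lambda]$ is a clean alternative that avoids the case distinction in $s$; what you give up is the slightly sharper treatment for $s<\tfrac12$ (where no $t$-splitting is needed), but this is immaterial for \eqref{eq:kF_L2weighted}.
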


\begin{proof}
Passing to polar coordinates $p\equiv(\varrho,\omega)$, $\varrho:=|p|$, $\omega\in\mathbb{S}^2$, $\widehat{F}(p)=\widehat{F}(\rho,\omega)$, we see that the function $\eta_\omega(\varrho):=\varrho(\varrho^2+\lambda)^\frac{s}{2}\widehat{F}(\varrho,\omega)$ belongs to $L^2(\mathbb{R}^+,\ud\varrho)$ with
\[
\int_{\mathbb{S}^2}\!\ud\omega\|\eta_\omega\|^2_{L^2(\mathbb{R}^+,\ud\varrho)}\;=\;\int_{\mathbb{S}^2}\!\ud\omega\int_0^{+\infty}\!\ud\varrho\,\varrho^2|(\varrho^2+\lambda)^\frac{s}{2}\widehat{F}(\varrho,\omega)|^2\;\approx\;\|F\|^2_{H^s(\mathbb{R}^3)}\,.
\]
Moreover,
\[\tag{*}
 \int_0^{+\infty}\!\!\ud t\,\frac{|\kappa_F(t)|^2}{\,(t+\lambda)^{\frac{1}{2}-s}}\;\leqslant\;\int_0^{+\infty}\!\ud t\int_{\mathbb{S}^2}\!\ud\omega\:\Big|\int_0^{+\infty}\!\ud\varrho\,\frac{\,t^{\frac{1}{2}}\varrho^{1-s}\,\eta_\omega(\varrho)}{\,(t^2+\lambda)^{\frac{1}{4}-\frac{s}{2}}(\varrho^2+\lambda+t^2)\,}\Big|^2\,,
\]
because
\[
\begin{split}
\int_0^{+\infty}\!\!\ud t\,&\frac{|\kappa_F(t)|^2}{\,(t+\lambda)^{\frac{1}{2}-s}}\;=\;2\int_0^{+\infty}\!\!\ud t\,\frac{t\,|\kappa_F(t^2)|^2}{\,(t^2+\lambda)^{\frac{1}{2}-s}} \\
&=\;\frac{1}{\:4\pi^3}\int_0^{+\infty}\!\!\ud t\,\frac{t}{\,(t^2+\lambda)^{\frac{1}{2}-s}}\:\Big|\int_{\mathbb{R}^3}\ud p\:\frac{(p^2+\lambda)^{\frac{s}{2}} \widehat{F}(p)}{\,(p^2+\lambda+t^2)(p^2+\lambda)^{\frac{s}{2}}}\Big|^2 \\
&=\;\frac{1}{\,4\pi^{3}}\int_0^{+\infty}\!\ud t\:\Big|\int_{\mathbb{S}^2}\!\ud\omega\int_0^{+\infty}\!\ud\varrho\,\frac{\,t^{\frac{1}{2}}\varrho^2\,(\varrho^2+\lambda)^\frac{s}{2}\widehat{F}(\varrho,\omega)}{\,(t^2+\lambda)^{\frac{1}{4}-\frac{s}{2}}(\varrho^2+\lambda+t^2)\,(\varrho^2+\lambda)^\frac{s}{2}}\Big|^2 \\
&\leqslant\;\frac{1}{\,\pi^{2}}\int_0^{+\infty}\!\ud t\int_{\mathbb{S}^2}\!\ud\omega\:\Big|\int_0^{+\infty}\!\ud\varrho\,\frac{\,t^{\frac{1}{2}}\varrho^{1-s}\,\eta_\omega(\varrho)}{\,(t^2+\lambda)^{\frac{1}{4}-\frac{s}{2}}(\varrho^2+\lambda+t^2)\,}\Big|^2\,. 
\end{split}
\]
There are two possible cases: $s\in[0,\frac{1}{2})$ and $s\in[\frac{1}{2},\frac{3}{2})$. In the first case one has $\frac{1}{4}-\frac{s}{2}\in(0,\frac{1}{4}]$, and (*) yields
\[
 \begin{split}
  \int_0^{+\infty}\!\!\ud t\,\frac{|\kappa_F(t)|^2}{\,(t+\lambda)^{\frac{1}{2}-s}}\;&\leqslant\;\int_0^{+\infty}\!\ud t\int_{\mathbb{S}^2}\!\ud\omega\:\Big|\int_0^{+\infty}\!\ud\varrho\,\frac{\,t^s\varrho^{1-s}\,}{\,\varrho^2+\lambda+t^2\,}\,\eta_\omega(\varrho)\Big|^2 \\
  &\leqslant\;\int_0^{+\infty}\!\ud t\int_{\mathbb{S}^2}\!\ud\omega\,\big|(Q\eta_\omega)(t)\big|^2\;=\;\int_{\mathbb{S}^2}\!\ud\omega\,\|Q\eta_\omega\|^2_{L^2(\mathbb{R}^+,\ud t)}\,,
 \end{split}
\]
where $Q$ is the integral operator on functions on $\mathbb{R}^+$ defined by the kernel
\[
Q(\varrho,t)\;:=\;\frac{\,t^s\,\varrho^{1-s}}{\,\varrho^2+t^2\,}\,.
\]
In fact, $Q$ has precisely the form of the operator $Q_{\beta,\gamma,\delta}$ defined in \eqref{eq:defQkernelSchur}-\eqref{eq:defQopSchur} of Corollary \ref{cor:schur} with $\beta=\frac{1}{4}-\frac{s}{2}$, $\gamma=\delta=2$, where in this case $\beta\in(0,\frac{1}{4}]$ and hence it is admissible (the admissibility condition in Corollary \ref{cor:schur} is $\beta\in(-\frac{1}{2},\frac{1}{2})$): then the Schur bound \eqref{eq:Q_boundedness} yields
\[
\|Q\eta_\omega\|_{L^2(\mathbb{R}^+,\ud t)} \;\leqslant\;\frac{\pi}{\sqrt{2}\,\cos(\frac{\pi}{4}-\frac{s\pi}{2})}\,\|\eta_\omega\|_{L^2(\mathbb{R}^+,\ud\varrho)}\,.
\]
Therefore,
\[
\begin{split}
\int_0^{+\infty}\!\!\ud t\,\frac{|\kappa_F(t)|^2}{\,(t+\lambda)^{\frac{1}{2}-s}}\;&\leqslant\;\int_{\mathbb{S}^2}\!\ud\omega\,\|Q\eta_\omega\|^2_{L^2(\mathbb{R}^+,\ud t)} \\
&\lesssim\;\int_{\mathbb{S}^2}\!\ud\omega\|\eta_\omega\|^2_{L^2(\mathbb{R}^+,\ud\varrho)}\;\approx\;\|F\|^2_{H^s(\mathbb{R}^3)}\,,
\end{split}
\]
which proves \eqref{eq:kF_L2weighted} in the case $s\in[0,\frac{1}{2})$. In the second case, namely $s\in[\frac{1}{2},\frac{3}{2})$, one has $\frac{s}{2}-\frac{1}{4}\in[0,\frac{1}{2})$, and (*) yields
\[
 \begin{split}
  \int_0^{+\infty}\!\!\ud t&\,\frac{|\kappa_F(t)|^2}{\,(t+\lambda)^{\frac{1}{2}-s}}\;\leqslant\;\int_0^{+\infty}\!\ud t\int_{\mathbb{S}^2}\!\ud\omega\:\Big|\int_0^{+\infty}\!\ud\varrho\,\frac{\,t^{\frac{1}{2}}(t^2+\lambda)^{\frac{s}{2}-\frac{1}{4}}\varrho^{1-s}\,\eta_\omega(\varrho)}{\,\varrho^2+\lambda+t^2\,}\Big|^2 \\
  &\leqslant\;\int_0^{+\infty}\!\ud t\int_{\mathbb{S}^2}\!\ud\omega\:\Big|\int_0^{+\infty}\!\ud\varrho\,\frac{\,(t^2+\lambda)^{\frac{s}{2}}\varrho^{1-s}\,\eta_\omega(\varrho)}{\,\varrho^2+\lambda+t^2\,}\Big|^2 \\
  &\lesssim\;\int_0^{1}\ud t\,(t^2+\lambda)^s\int_{\mathbb{S}^2}\!\ud\omega\:\Big|\int_0^{+\infty}\!\ud\varrho\,\frac{\,\varrho^{1-s}\,}{\,\varrho^2+\lambda\,}\,\eta_\omega(\varrho)\Big|^2 \\
  &\qquad\qquad +\int_1^{+\infty}\!\ud t\int_{\mathbb{S}^2}\!\ud\omega\:\Big|\int_0^{+\infty}\!\ud\varrho\,\frac{\,t^s\varrho^{1-s}\,}{\,\varrho^2+t^2\,}\,\eta_\omega(\varrho)\Big|^2  \\
  &\lesssim\;
  \int_{\mathbb{S}^2}\!\ud\omega\,\|\eta_\omega\|^2_{L^2(\mathbb{R}^+,\ud\varrho)}+\int_{\mathbb{S}^2}\!\ud\omega\,\|Q\eta_\omega\|^2_{L^2(\mathbb{R}^+,\ud t)}\,,
 \end{split}
\]
the integral operator $Q$ being defined as in the first case. Here $Q$ is of the form $Q_{\beta,\gamma,\delta}$ of\eqref{eq:defQkernelSchur}-\eqref{eq:defQopSchur} with $\beta=\frac{1}{4}-\frac{s}{2}$, $\gamma=\delta=2$, where in this case $\beta\in(-\frac{1}{2},0]$ and hence $\beta$ is again admissible: the above inequality and the Schur bound \eqref{eq:Q_boundedness} then yield
\[
\begin{split}
\int_0^{+\infty}\!\!\ud t\,\frac{|\kappa_F(t)|^2}{\,(t+\lambda)^{\frac{1}{2}-s}}\;&\lesssim\;\int_{\mathbb{S}^2}\!\ud\omega\,\|\eta_\omega\|^2_{L^2(\mathbb{R}^+,\ud\varrho)}+\int_{\mathbb{S}^2}\!\ud\omega\,\|Q\eta_\omega\|^2_{L^2(\mathbb{R}^+,\ud t)} \\
&\lesssim\;\int_{\mathbb{S}^2}\!\ud\omega\|\eta_\omega\|^2_{L^2(\mathbb{R}^+,\ud\varrho)}\;\approx\;\|F\|^2_{H^s(\mathbb{R}^3)}\,,
\end{split}
\]
which proves \eqref{eq:kF_L2weighted} also in the case $s\in[\frac{1}{2},\frac{3}{2})$.
\end{proof}

We can now prove Proposition \ref{prop:Hs_in_domDalph}. To this aim, it is convenient to introduce the function $I_F$ whose Fourier transform is given by
\begin{equation}\label{eq:IF}
\widehat{I_F}(p)\;:=\;-\frac{\,4\sin{\textstyle\frac{s \pi}{2}}}{(2\pi)^{\frac{3}{2}}}\int_0^{+\infty}\!\ud t\; \frac{t^{\frac{s}{2}}\,\kappa_{F}(t)}{\,4\pi\alpha+\sqrt{\lambda+t\,}\,}\,\frac{1}{\,p^2+\lambda+t\,}\,,
\end{equation}
where
\begin{equation}\label{eq:IF_kF}
\kappa_F(t)\;:=\;\frac{1}{\,(2\pi)^\frac{3}{2}}\int_{\mathbb{R}^3}\!\ud p\,\frac{\widehat{F}(p)}{\,p^2+\lambda+t\,}\,.
\end{equation}

\begin{proof}[Proof of Proposition \ref{prop:Hs_in_domDalph}]
(i) By formulas \eqref{eq:fract_pow_formula}-\eqref{eq:kappa_phi} of Theorem \ref{thm:fract_pow_formulas}(i), re-written in Fourier transform, we have
\begin{equation}\label{eq:DasF}
((-\Delta_\alpha+\lambda\mathbbm{1})^{\frac{s}{2}}F)\,\widehat{\;}\,(p)\;=\;((-\Delta+\lambda\mathbbm{1})^{\frac{s}{2}}F)\,\widehat{\;}\,(p)\:+\:\widehat{I_F}(p)\,,
\end{equation}
where the function $I_F$ is given by \eqref{eq:IF}-\eqref{eq:IF_kF}.
By assumption, $(-\Delta+\lambda\mathbbm{1})^{\frac{s}{2}}F\in L^2(\mathbb{R}^3)$; therefore, the fact that $F\in\mathcal{D}((-\Delta_\alpha)^{s/2})$ with $\|(-\Delta_\alpha+\lambda\mathbbm{1})^{\frac{s}{2}}F\|_2\lesssim\|F\|_{H^s}$
follows at once from \eqref{eq:DasF} if one proves that $I_F\in L^2(\mathbb{R}^3)$ with $\|I_F\|_{L^2}\lesssim\|F\|_{H^s}$.
To this aim, setting $\mu(t):=(t+\lambda)^{-\frac{1}{4}+\frac{s}{2}}\kappa_F(t)$, we observe that
\[
\begin{split}
\|I_F\|_{2}^2\;&\lesssim\;\int_{\mathbb{R}^3}\ud p\,\Big|\int_0^{+\infty}\!\!\ud t\,\frac{t^{\frac{s}{2}}\,\kappa_F(t)}{\,4\pi\alpha+\sqrt{\lambda+t}\,}\,\frac{1}{\,p^2+\lambda+t\,}\Big|^2 \\
&
\lesssim\;\int_0^{+\infty}\!\!\ud \varrho\,\Big|\int_0^{+\infty}\!\!\ud t\;\frac{
\,t^{\frac{s}{2}}\,}{\,(\lambda+t)^{\frac{1}{4}+\frac{s}{2}}\,}\,\frac{\varrho}{\,\varrho^2+\lambda+t\,}\,\mu(t)\Big|^2\;\leqslant\;\|Q\mu\|^2_{L^2(\mathbb{R}^+,\ud\varrho)}\,,
\end{split}
\]
where for convenience we wrote
\[
(Q\mu)(\varrho)\;:=\;\int_0^{+\infty}\!\!\!\ud t\,Q(\varrho,t)\,\mu(t)\,,\qquad Q(\varrho,t)\;:=\;\frac{
\,t^{-\frac{1}{4}}\,\varrho\,}{\,\varrho^2+t\,}\,.
\]
In fact, this defines an integral operator $Q$ on functions on $\mathbb{R}^+$ which has precisely the form of the operator $Q_{\beta,\gamma,\delta}$ defined in \eqref{eq:defQkernelSchur}-\eqref{eq:defQopSchur} of Corollary \ref{cor:schur} with $\beta=-\frac{1}{4}$, $\gamma=2$, $\delta=1$. Then the Schur bound \eqref{eq:Q_boundedness} yields
\[
|Q\mu\|_{L^2(\mathbb{R}^+,\ud\varrho)}\;\leqslant\;\pi\,\|\mu\|_{L^2(\mathbb{R}^+,\ud t)}\,.
\]
%
%
Combining the estimates above with \eqref{eq:kF_L2weighted} yields
\[
\begin{split}
\|I_F\|_{2}\;\lesssim\;\|Q\mu\|_{L^2(\mathbb{R}^+,\ud\varrho)}\;\lesssim\;\|\mu\|_{L^2(\mathbb{R}^+,\ud t)}\;\lesssim\;\|F\|_{H^s}\,,
\end{split}
\]
which completes the proof of part (i).

As for part (ii), if for contradiction $H^s(\mathbb{R}^3)$ was a subspace of $\mathcal{D}((-\Delta_\alpha)^{s/2})$, then the canonical decomposition \eqref{eq:gdecomp}/\eqref{eq:decomp_h=G+w}  $g=f_g+c_{f_g} G_\lambda+w_{f_g}$ of a generic element $g\in\mathcal{D}((-\Delta_\alpha)^{s/2})$ for suitable functions $f_g,w_{f_g}\in H^s(\mathbb{R}^3)$ would imply that $c_{f_g}G_\lambda =g-{f_g}-w_{f_g}\in \mathcal{D}((-\Delta_\alpha)^{s/2})$. For those $g$'s with non-zero coefficient $c_g$ this would yield the contradiction that $G_\lambda$ too belongs to $\mathcal{D}((-\Delta_\alpha)^{s/2})$, which was proved to be false in Proposition \ref{prop:eq:Dalph_on_Glam}.
\end{proof}

We move now to the third main result of this Section. It is formulated for $s\in(\frac{1}{2},2)$, but it is relevant for us in the regime of large $s$, namely $s\in[\frac{3}{2},2)$ (it provides no new information for lower $s$). As seen previously, in this regime neither $H^s(\mathbb{R}^3)$ nor $\mathrm{span}\{G_\lambda\}$ are contained in $\mathcal{D}((-\Delta_\alpha)^{s/2})$. Nevertheless, we can identify a suitable proper subspace of $H^s(\mathbb{R}^3)\dotplus\mathrm{span}\{G_\lambda\}$ which is still contained in $\mathcal{D}((-\Delta_\alpha)^{s/2})$, as we shall now show.

To this aim,  given $\alpha\geqslant 0$, $\lambda>0$, and $s\in(\frac{1}{2},2)$, we introduce the subspace $\mathcal{D}_0^{(s)}\subset H^s(\mathbb{R}^3)$ defined by
\begin{equation}\label{eq:def_subspace_Ds}
 \mathcal{D}_0^{(s)}\;:=\;\left\{ F\in H^s(\mathbb{R}^3)\left|\!\!
 \begin{array}{c}
  F^{(0)}:=\displaystyle\frac{1}{\;(2\pi)^{\frac{3}{2}}}\int_{\mathbb{R}^3}\!\ud p\,\widehat{F}(p)<+\infty \\
  I_F+\frac{F^{(0)}}{\,\alpha+\frac{\sqrt{\lambda}}{4\pi}\,}\,J_\lambda\in L^2(\mathbb{R}^3)
 \end{array}
 \!\!\!\right.\right\},
\end{equation}
where $I_F$ is the function defined by \eqref{eq:IF}-\eqref{eq:IF_kF} for given $F$, and $J_\lambda$ is the function defined by \eqref{eq:defJ}-\eqref{eq:defJ-phi}.

\begin{proposition}\label{prop:high_s_in_Ds} Let $\alpha\geqslant 0$ and $\lambda>0$.
\begin{itemize}
 \item[(i)] For $s\in(\frac{1}{2},2)$ one has
  \begin{equation}\label{eq:Ds_contains_specialclass}
 \mathcal{D}((-\Delta_\alpha)^{s/2})\;\supset\;\Big\{F+\frac{F^{(0)}}{\,\alpha+\frac{\sqrt{\lambda}}{4\pi}\,}\,G_\lambda\,\Big|\,F\in \mathcal{D}_0^{(s)}\Big\}\,,
 \end{equation}
 the space $\mathcal{D}_0^{(s)}\subset H^s(\mathbb{R}^3)$ being defined in \eqref{eq:def_subspace_Ds}.  In particular, $\mathcal{D}_0^{(s)}$ contains the Schwarz class $\mathcal{S}(\mathbb{R}^3)$, and
  \begin{equation}\label{eq:Ds_contains_specialclass2}
 \mathcal{D}((-\Delta_\alpha)^{s/2})\;\supset\;\Big\{F+\frac{F(0)}{\,\alpha+\frac{\sqrt{\lambda}}{4\pi}\,}\,G_\lambda\,\Big|\,F\in\mathcal{S}(\mathbb{R}^3)\Big\}\,.
 \end{equation}
 \item[(ii)] For $s=\frac{3}{2}$ one has
  \begin{equation}\label{eq:Ds_for_s32}
 \mathcal{D}((-\Delta_\alpha)^{3/4})\;=\;\Big\{F+\frac{F^{(0)}}{\,\alpha+\frac{\sqrt{\lambda}}{4\pi}\,}\,G_\lambda\,\Big|\,F\in \mathcal{D}_0^{(3/2)}\Big\}\,.
 \end{equation}
\end{itemize}
\end{proposition}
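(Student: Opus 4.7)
The key identity driving both parts is the distributional formula
\[
(-\Delta_\alpha+\lambda\mathbbm{1})^{s/2}(F+cG_\lambda)\;=\;(-\Delta+\lambda\mathbbm{1})^{s/2}F+I_F+c\,J_\lambda
\]
valid for any $F\in H^s(\mathbb{R}^3)$, $c\in\mathbb{C}$, $s\in(0,2)$. It follows by linearity: Theorem \ref{thm:fract_pow_formulas}(i) applied to $F$ gives the contribution $I_F$ of \eqref{eq:IF}, while Proposition \ref{prop:eq:Dalph_on_Glam} gives $(-\Delta_\alpha+\lambda\mathbbm{1})^{s/2}G_\lambda = J_\lambda$ (a distributional identity, which is an $L^2$-identity only when $s<\tfrac{3}{2}$). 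Since $(-\Delta+\lambda\mathbbm{1})^{s/2}F\in L^2(\mathbb{R}^3)$ automatically, membership of $F+cG_\lambda$ in $\mathcal{D}((-\Delta_\alpha)^{s/2})$ reduces to $I_F+cJ_\lambda\in L^2(\mathbb{R}^3)$.

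Part (i) then follows at once from the very definition of $\mathcal{D}_0^{(s)}$ in \eqref{eq:def_subspace_Ds}, once one chooses $c = F^{(0)}/(\alpha+\tfrac{\sqrt\lambda}{4\pi})$. For the Schwartz-class inclusion, I would exploit the operator-domain characterisation \eqref{eq:op_dom}: if $F\in\mathcal{S}(\mathbb{R}^3)$, then $F\in H^2(\mathbb{R}^3)$ and $F^{(0)}=F(0)$, so $F+\tfrac{F(0)}{\alpha+\sqrt\lambda/(4\pi)}\,G_\lambda$ belongs to $\mathcal{D}(-\Delta_\alpha)$, and by the spectral theorem $\mathcal{D}(-\Delta_\alpha)\subset \mathcal{D}((-\Delta_\alpha)^{s/2})$ for $s\in(0,2]$. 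In particular $\mathcal{S}(\mathbb{R}^3)\subset \mathcal{D}_0^{(s)}$.

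For the inclusion $\supset$ in part (ii), use part (i) at $s=\tfrac{3}{2}$. For the reverse inclusion, fix $g\in \mathcal{D}((-\Delta_\alpha)^{3/4})$: the canonical decomposition of Proposition \ref{prop:canonical_decomp} yields $g=f_g+h_g$ with $f_g\in H^{3/2}(\mathbb{R}^3)$, and since $\tfrac{3}{2}>\tfrac{1}{2}$ Proposition \ref{lem:w_in_Hs} refines $h_g = q_{f_g}G_\lambda + w_{f_g}$ with $w_{f_g}\in H^{3/2}(\mathbb{R}^3)$. Set $F:=f_g+w_{f_g}\in H^{3/2}(\mathbb{R}^3)$ and $c:=q_{f_g}$. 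The master identity above immediately gives $I_F+cJ_\lambda\in L^2(\mathbb{R}^3)$, which is the second defining condition of $\mathcal{D}_0^{(3/2)}$; what remains is to show the first, namely $\widehat F\in L^1(\mathbb{R}^3)$, together with $c=F^{(0)}/(\alpha+\tfrac{\sqrt\lambda}{4\pi})$.

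The hard part will be precisely this matching between $c$ and $F^{(0)}$. My plan is an asymptotic analysis at $|p|\to+\infty$. Lemma \ref{lem:Jlambda} at $s=\tfrac{3}{2}$ gives $\widehat{J_\lambda}(p)\sim (4\pi\alpha+\sqrt\lambda)(2\pi)^{-3/2}(p^2+\lambda)^{-3/4}$, a borderline non-$L^2$ decay. A parallel analysis of $\widehat{I_F}$ -- using $\kappa_F(t)\sim F^{(0)}/t$ as $t\to+\infty$ (true when $\widehat F\in L^1$) and the substitution $t=p^2 u$ in \eqref{eq:IF} -- produces $\widehat{I_F}(p)\sim -4\pi F^{(0)}(2\pi)^{-3/2}(p^2+\lambda)^{-3/4}$. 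The $L^2$-integrability of $I_F+cJ_\lambda$ then forces the leading terms to cancel, pinning $c=F^{(0)}/(\alpha+\tfrac{\sqrt\lambda}{4\pi})$. To upgrade this heuristic into a rigorous argument without presupposing $\widehat F\in L^1$, I would exploit the explicit structure $F=f_g+w_{f_g}$ provided by the canonical decomposition: the extra $(p^2+\lambda)^{-1}$ factor in $\widehat{w_{f_g}}$ visible in \eqref{eq:w} creates a precise cancellation pattern within $\widehat F$ which, combined with the $L^2$-constraint propagated through the master identity, should promote the borderline Sobolev decay of $\widehat F$ to genuine $L^1$-integrability.
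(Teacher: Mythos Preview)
Your treatment of part (i) is correct. The derivation of \eqref{eq:Ds_contains_specialclass} via the master identity is identical to the paper's. For the Schwartz-class inclusion, however, you take a genuinely different and more economical route: the paper proves $\mathcal{S}(\mathbb{R}^3)\subset\mathcal{D}_0^{(s)}$ by combining the explicit integral representations of $\widehat{I_F}$ and $\widehat{J_\lambda}$ into a single $t$-integral and bounding it pointwise by $\mathrm{const}(F)/(p^2+\lambda)$ using the rapid decay of $\widehat F$. Your observation that $F+\frac{F(0)}{\alpha+\sqrt\lambda/(4\pi)}G_\lambda\in\mathcal{D}(-\Delta_\alpha)\subset\mathcal{D}((-\Delta_\alpha)^{s/2})$ bypasses this computation entirely, and the $L^2$-condition on $I_F+\frac{F(0)}{\alpha+\sqrt\lambda/(4\pi)}J_\lambda$ then falls out of the master identity. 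This is cleaner; the paper's direct estimate, on the other hand, yields an explicit pointwise bound that could be useful elsewhere.

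Part (ii) has a genuine gap at the decisive step. You correctly deduce $I_F+q_{f_g}J_\lambda\in L^2(\mathbb{R}^3)$, which is the second defining condition of $\mathcal{D}_0^{(3/2)}$. But the first condition --- finiteness of $F^{(0)}$ --- together with the matching $q_{f_g}=F^{(0)}/(\alpha+\tfrac{\sqrt\lambda}{4\pi})$ is precisely the difficulty of the transition $s=\tfrac32$, and your plan does not establish it. Your asymptotic computation for $\widehat{I_F}$ is circular: the relation $\kappa_F(t)\sim F^{(0)}/t$ already presupposes $\widehat F\in L^1$. Your proposed upgrade via ``the $L^2$-constraint propagated through the master identity'' is too vague to assess; that constraint is equivalent to the starting hypothesis $g\in\mathcal{D}((-\Delta_\alpha)^{3/4})$ and carries no information beyond what the canonical decomposition already encodes, so it is not clear what additional leverage it provides toward $\widehat F\in L^1$.

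The paper's argument is quite different in character: it computes $\lim_{R\to+\infty}\int_{|p|<R}\widehat{F_f}(p)\,\ud p$ \emph{directly}, mimicking the proof of Lemma~\ref{lem:qf_controlledby_F0} but with a truncation scheme because at $s=\tfrac32$ the separate integrals $\int_{|p|<R}\widehat f$ and $\int_{|p|<R}\widehat{w_f}$ may individually diverge. One inserts the identity $1=\frac{1}{\pi\sqrt{2}}\int_0^\infty t^{-3/4}(p^2+\lambda)^{3/4}(p^2+\lambda+t)^{-1}\,\ud t$, introduces truncated versions $c_{R,f}(t)$ of $c_f(t)$, and reorganises the resulting double integrals so that the convergent piece $\frac{1}{\pi\sqrt 2}\int_0^\infty t^{-3/4}c_f(t)\phi(t)\,\ud t=(\alpha+\tfrac{\sqrt\lambda}{4\pi})q_f$ is isolated while several remainder terms are shown to vanish via dominated and monotone convergence. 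This truncation-and-cancellation machinery is the substance of the proof and is absent from your proposal.
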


\begin{remark}
 Formula \eqref{eq:Ds_for_s32} qualifies the fractional domain in the transition case $s=\frac{3}{2}$ and implies the following interesting corollary: the only linear combinations $F+q\,G_{\lambda}$ that it is possible to find in $\mathcal{D}((-\Delta_\alpha)^{3/4})$ for some $H^{\frac{3}{2}}$-function $F$ must satisfy $\int_{\mathbb{R}^3}\widehat{F}(p)\,\ud p<+\infty$; as such, $F$ cannot be a generic function in $H^{\frac{3}{2}}(\mathbb{R}^3)$. Such a \emph{loss of genericity} of the $H^{\frac{3}{2}}$-regular component in $\mathcal{D}((-\Delta_\alpha)^{3/4})$ is the distinctive feature of the transition at $s=\frac{3}{2}$, since both below and above this threshold the regular part of an element in the fractional domain $\mathcal{D}((-\Delta_\alpha)^{s/2})$ is indeed a generic $H^s$-function. In fact, we can prove this remarkable feature of the transition $s=\frac{3}{2}$ independently of the present proof of Proposition \ref{prop:high_s_in_Ds}: in order not to break the flow of our discussion, we find it instructive to cast an alternative proof in Appendix \ref{app:alternative_proofs}.
\end{remark}




\begin{proof}[Proof of Proposition \ref{prop:high_s_in_Ds}]
(i) Let $F\in\mathcal{D}_0^{(s)}$. In particular, $F\in H^s(\mathbb{R}^3)$ and $F^{(0)}$ is finite.

In order to prove \eqref{eq:Ds_contains_specialclass} one needs to show that $(-\Delta_\alpha+\lambda\mathbbm{1})^{s/2}(F+\frac{F^{(0)}}{\,\alpha+\frac{\sqrt{\lambda}}{4\pi}\,}\,G_\lambda)$ is square integrable. In fact, owing to \eqref{eq:Dalph_on_Glam} and \eqref{eq:DasF},
\begin{equation}\label{eq:ation_on_F+F0Gl}
(-\Delta_\alpha+\lambda\mathbbm{1})^{s/2}\Big(F+\frac{F^{(0)}}{\,\alpha+\frac{\sqrt{\lambda}}{4\pi}\,}\,G_\lambda\Big)\;=\;(-\Delta+\lambda\mathbbm{1})^{s/2}F+ I_F+\frac{F^{(0)}}{\,\alpha+\frac{\sqrt{\lambda}}{4\pi}\,}\,J_\lambda\,,
\end{equation}
which indeed belongs to $L^2(\mathbb{R}^3)$ because so do $(-\Delta+\lambda\mathbbm{1})^{s/2}F$ and $I_F+\frac{F^{(0)}}{\,\alpha+\frac{\sqrt{\lambda}}{4\pi}\,}\,J_\lambda$, as a consequence of the fact that $F$ belongs to the space $\mathcal{D}_0^{(s)}$.

Next, in order to prove \eqref{eq:Ds_contains_specialclass2} we combine \eqref{eq:defJ}-\eqref{eq:defJ-phi} and \eqref{eq:IF}-\eqref{eq:IF_kF} so as to get
\[
\begin{split}
\widehat{I_F}(p)+\frac{F^{(0)}}{\,\alpha+\frac{\sqrt{\lambda}}{4\pi}\,}\,\widehat{J_\lambda}(p)\;=\;\frac{4\sin\frac{s\pi}{2}}{(2\pi)^{\frac{3}{2}}}\int_0^{+\infty}\!\!\!\ud t\,\frac{\,t^{\frac{s}{2}-1}(F^{(0)}-t\,\kappa_F(t))\,}{\,4\pi\alpha+\sqrt{\lambda+t}\,}\,\frac{1}{\,p^2+\lambda+t\,}\,.
\end{split}
\]
When $F\in\mathcal{S}(\mathbb{R}^3)$ the finiteness of $F^{(0)}=F(0)$ is obvious, and
\[
\begin{split}
 |F^{(0)}-t\,\kappa_F(t)|\;&=\;\Big|\frac{1}{\,(2\pi)^{\frac{3}{2}}}\int_{\mathbb{R}^3}\ud p\,\widehat{F}(p)\,\frac{p^2+\lambda}{\,p^2+\lambda+t\,}\Big|\;\lesssim\;\frac{\;\mathrm{const}(F)\;}{1+t}\,,
\end{split}
\]
whence
\[
\Big|\,\widehat{I_F}(p)+\frac{F^{(0)}}{\,\alpha+\frac{\sqrt{\lambda}}{4\pi}\,}\,\widehat{J_\lambda}(p)\,\Big|\;\lesssim\;\frac{\;\mathrm{const}(F)\;}{p^2+\lambda}\,.
\]
This shows that $I_F+\frac{F^{(0)}}{\,\alpha+\frac{\sqrt{\lambda}}{4\pi}\,}\,J_\lambda\in L^2(\mathbb{R}^3)$ whenever $F\in\mathcal{S}(\mathbb{R}^3)$, thus concluding that $\mathcal{S}(\mathbb{R}^3)\subset\mathcal{D}_0^{(s)}$.

\noindent (ii) One has to prove the opposite inclusion than \eqref{eq:Ds_contains_specialclass} in the special case $s=\frac{3}{2}$. Let $g\in\mathcal{D}((-\Delta_\alpha)^{3/4})$. Necessarily $g=F_{f_g}+q_{f_g}\, G_\lambda$ for functions $f_g,w_{f_g}, F_{f_g}\in  H^{\frac{3}{2}}(\mathbb{R}^3)$ with $F_{f_g}=f_g+w_{f_g}$ and for a constant $q_{f_g}\in \mathbb{C}$, as prescribed by the canonical decomposition \eqref{eq:gdecomp}/\eqref{eq:decomp_h=G+w}. Let us suppress the index `$g$' in the following.

%
%
Now, we claim that
\[\tag{i}
 F_f^{(0)}\;=\;\displaystyle\frac{1}{\;(2\pi)^{3/2}}\int_{\mathbb{R}^3}\!\ud p\,\widehat{F_f}(p)<+\infty\qquad\textrm{and}\qquad q_f\;=\;\frac{F_f^{(0)}}{\,\alpha+\frac{\sqrt{\lambda}}{4\pi}\,}\,.
\]
From this claim we deduce that $F_f+\frac{F_f^{(0)}}{\,\alpha+\frac{\sqrt{\lambda}}{4\pi}\,}\,G_\lambda=g\in\mathcal{D}((-\Delta_\alpha)^{3/4})$; as a consequence, \eqref{eq:ation_on_F+F0Gl} implies that $I_{F_f}+\frac{F_f^{(0)}}{\,\alpha+\frac{\sqrt{\lambda}}{4\pi}\,}\,J_\lambda\in L^2(\mathbb{R}^3)$. This completes the proof, because the finiteness of $F_f^{(0)}$ and the square-integrability of $I_{F_f}+\frac{F_f^{(0)}}{\,\alpha+\frac{\sqrt{\lambda}}{4\pi}\,}\,J_\lambda$ amount to $F_f\in\mathcal{D}^{(3/2)}_0$, and $g$ has the form $F_f+\frac{F_f^{(0)}}{\,\alpha+\frac{\sqrt{\lambda}}{4\pi}\,}\,G_\lambda$.

Let us therefore establish (i). To this aim, we mimic the proof of Lemma \ref{lem:qf_controlledby_F0}: in that case we had $s>\frac{3}{2}$, which made the manipulation of all the indefinite integrals harmless; now, instead, $s=\frac{3}{2}$ and a truncation scheme is needed.
Moreover, thanks to the linearity, let us assume, non restrictively, that $\widehat{f}(p)>0$, and hence also $c_f(t)>0$ and $-\widehat{w_f}(p)>0$, as follows from \eqref{eq:cgeneral-Fourier} and \eqref{eq:w}.

First of all,
\[\tag{ii}
 \begin{split}
  F_f^{(0)}\;&=\;\lim_{R\to +\infty}\int_{|p|<R}\widehat{F_f}(p)\,\ud p \\
  &=\;\lim_{R\to +\infty}\Big(\frac{1}{\,(2\pi)^{\frac{3}{2}}}\int_{|p|<R}\widehat{f}(p)\,\ud p+\frac{1}{\,(2\pi)^{\frac{3}{2}}}\int_{|p|<R}\widehat{w_f}(p)\,\ud p\Big)\,.
 \end{split}
\]
In general, each integral in the r.h.s.~above is in divergent as $R\to +\infty$, and we want to show that a compensation among them cancels this possible divergence.

By inserting into the first integrand in the r.h.s.~of (ii) the quantity
\[
 1\;=\;\frac{1}{\pi\sqrt{2}}\int_0^{+\infty}\!\!\ud t\,\frac{\,t^{-\frac{3}{4}}(p^2+\lambda)^{\frac{3}{4}}}{p^2+\lambda+t}
\]
(see \eqref{eq:useful_int}), it is immediately checked that dominated convergence and exchange of the truncated integration over $t$ and $p$ apply, so one has
\[\tag{iii}
 \begin{split}
  \frac{1}{\,(2\pi)^{\frac{3}{2}}}&\int_{|p|<R}\widehat{f}(p)\,\ud p\;= \\
  &=\;\frac{1}{\,\pi\sqrt{2}\,(2\pi)^{\frac{3}{2}}}\lim_{T\to +\infty}\int_{|p|<R}\ud p \,\widehat{f}(p)\,\int_0^T\ud t\, \frac{\,t^{-\frac{3}{4}}(p^2+\lambda)^{\frac{3}{4}}}{p^2+\lambda+t} \\
  &=\;\frac{1}{\,\pi\sqrt{2}\,}\lim_{T\to +\infty}\int_0^T\ud t\,t^{-\frac{3}{4}}\frac{1}{\,(2\pi)^{\frac{3}{2}}}\int_{|p|<R}\ud p \,\frac{\,(p^2+\lambda)^{\frac{3}{4}}\widehat{f}(p)}{p^2+\lambda+t} \\
   &=\;\frac{1}{\,\pi\sqrt{2}\,}\lim_{T\to +\infty}\int_0^T\ud t\,t^{-\frac{3}{4}}\,c_{R,f}(t)\,,
 \end{split}
\]
where for convenience we denoted by
\[
 c_{R,f}(t)\;:=\;\frac{1}{\,(2\pi)^{\frac{3}{2}}}\int_{|p|<R}\ud p \,\frac{\,(p^2+\lambda)^{\frac{3}{4}}\widehat{f}(p)}{p^2+\lambda+t}
\]
the finite-momentum truncation of the function $c_f(t)$ defined in \eqref{eq:cgeneral-Fourier}.

An analogous use of dominated convergence and exchange of integration, using \eqref{eq:w}, yields
\[
 \begin{split}
  \frac{1}{\,(2\pi)^{\frac{3}{2}}}&\int_{|p|<R}\widehat{w_f}(p)\,\ud p\;= \\
  &=\;-\frac{2\sqrt{2}}{\,(2\pi)^3}\int_{|p|<R}\ud p \,\int_0^{+\infty}\!\!\ud t\,\frac{\,t^{\frac{1}{4}}c_f(t)}{\,4\pi\alpha+\sqrt{\lambda+t}\,}\,\frac{1}{\,(p^2+\lambda)(p^2+\lambda+t)\,} \\
  &=\;-\frac{2\sqrt{2}}{\,(2\pi)^3}\lim_{T\to +\infty}\int_{|p|<R}\ud p \,\int_0^{T}\ud t\,\frac{\,t^{\frac{1}{4}}c_f(t)}{\,4\pi\alpha+\sqrt{\lambda+t}\,}\,\frac{1}{\,(p^2+\lambda)(p^2+\lambda+t)\,} \\
  &=\;-\frac{2\sqrt{2}}{\,(2\pi)^3}\lim_{T\to +\infty}\int_0^{T}\ud t\,\frac{\,t^{\frac{1}{4}}c_f(t)}{\,4\pi\alpha+\sqrt{\lambda+t}\,}\int_{|p|<R}\frac{\ud p}{\,(p^2+\lambda)(p^2+\lambda+t)\,}\,.
 \end{split}
\]
It is convenient to re-arrange the r.h.s.~above as
\[\tag{iv}
 \begin{split}
  &\frac{1}{\,(2\pi)^{\frac{3}{2}}}\int_{|p|<R}\widehat{w_f}(p)\,\ud p\;= \\
  &\;\;=\;\frac{1}{\pi\sqrt{2}\,}\lim_{T\to +\infty}\int_0^{T}\ud t\,t^{-\frac{3}{4}}\,c_f(t)\,\phi(t)\big({\textstyle \frac{2}{\pi}\arctan\frac{R}{\sqrt{\lambda+t}}}\big) \\
  &\;\;\qquad-\frac{1}{\pi\sqrt{2}\,}\lim_{T\to +\infty}\int_0^{T}\ud t\,t^{-\frac{3}{4}}\,c_f(t)\big({\textstyle \frac{2}{\pi}\arctan\frac{R}{\sqrt{\lambda+t}}}\big) \\
  &\;\;\qquad-\frac{2\sqrt{2}}{\,(2\pi)^3}\lim_{T\to +\infty}\int_0^{T}\ud t\,\frac{\,t^{\frac{1}{4}}c_f(t)}{\,4\pi\alpha+\sqrt{\lambda+t}\,}\;\times \\
  &\;\;\qquad\qquad\times\textstyle\Big(\int_{|p|<R}\frac{\ud p}{\,(p^2+\lambda)(p^2+\lambda+t)\,}-\frac{4\pi}{\sqrt{\lambda+t}+\sqrt{\lambda}}\,\arctan\frac{R}{\sqrt{\lambda+t}}\Big)\,,
 \end{split}
\]
where we inserted the function $\phi(t)$ defined in \eqref{eq:defJ-phi}/\eqref{eq:phi_double_form}.

Plugging (iii) and (iv) into (ii),
\[\tag{v}
 \begin{split}
  F_f^{(0)}\;&=\;\lim_{R\to +\infty}\Big\{\frac{1}{\pi\sqrt{2}\,}\lim_{T\to +\infty}\int_0^{T}\ud t\,t^{-\frac{3}{4}}\,c_f(t)\,\phi(t)\big({\textstyle \frac{2}{\pi}\arctan\frac{R}{\sqrt{\lambda+t}}}\big) \\
  &\;\;\;\;\qquad\qquad +\frac{1}{\pi\sqrt{2}\,}\lim_{T\to +\infty}\int_0^{T}\ud t\,t^{-\frac{3}{4}}\,\Big(c_{R,f}(t)-c_f(t)\big({\textstyle \frac{2}{\pi}\arctan\frac{R}{\sqrt{\lambda+t}}}\big)\Big)\\
  &\;\;\;\;\qquad\qquad-\frac{2\sqrt{2}}{\,(2\pi)^3}\lim_{T\to +\infty}\int_0^{T}\ud t\,\frac{\,t^{\frac{1}{4}}c_f(t)}{\,4\pi\alpha+\sqrt{\lambda+t}\,}\;\times \\
  &\;\;\;\;\;\;\qquad\qquad\qquad\times\textstyle\Big(\int_{|p|<R}\frac{\ud p}{\,(p^2+\lambda)(p^2+\lambda+t)\,}-\frac{4\pi}{\sqrt{\lambda+t}+\sqrt{\lambda}}\,\arctan\frac{R}{\sqrt{\lambda+t}}\Big)\Big\}\,.
 \end{split}
\]

The first term in the r.h.s.~of (v) can be thought of as an integration over $t\in\mathbb{R}$ of the function $t\mapsto\mathbf{1}_{\{t\in[0,T]\}}t^{-\frac{3}{4}}\,c_f(t)\,\phi(t)({\textstyle \frac{2}{\pi}\arctan\frac{R}{\sqrt{\lambda+t}}})$. Recalling that $c_f(t)\lesssim(1+t)^{-\frac{1}{4}}$ (see \eqref{eq:ct_Linf}), $\phi(t)\lesssim(1+t)^{-\frac{1}{2}}$ (see \eqref{eq:bound_on_phi}), and $\frac{2}{\pi}\arctan\frac{R}{\sqrt{\lambda+t}}<1$, we see that dominated convergence applies twice and
\[\tag{vi}
\begin{split}
 \frac{1}{\pi\sqrt{2}\,}&\int_0^{T}\ud t\,t^{-\frac{3}{4}}\,c_f(t)\,\phi(t)\big({\textstyle \frac{2}{\pi}\arctan\frac{R}{\sqrt{\lambda+t}}}\big) \\
 &\qquad \xrightarrow[]{\;T\to +\infty\;}\;\;\frac{1}{\pi\sqrt{2}\,}\int_0^{+\infty}\!\!\ud t\,t^{-\frac{3}{4}}\,c_f(t)\,\phi(t)\big({\textstyle \frac{2}{\pi}\arctan\frac{R}{\sqrt{\lambda+t}}}\big) \\
 &\qquad \xrightarrow[]{\;R\to +\infty\;}\;\;\frac{1}{\pi\sqrt{2}\,}\int_0^{+\infty}\!\!\ud t\,t^{-\frac{3}{4}}\,c_f(t)\,\phi(t)\;=\; \\
 &\qquad\qquad\qquad\qquad=\;\frac{1}{\pi\sqrt{2}\,}\int_0^{+\infty}\!\!\ud t\,t^{-\frac{3}{4}}\,c_f(t)\,\frac{4\pi\alpha+\sqrt{\lambda}}{\,4\pi\alpha+\sqrt{\lambda+t}\,} \\
 &\qquad\qquad\qquad\qquad=\;(\alpha+{\textstyle\frac{\sqrt{\lambda}}{4\pi}})\,q_f\,,
\end{split}
\]
having used \eqref{eq:phi_double_form} and \eqref{eq:qs} in the last two steps.

From (v) and (vi) we find
\[
 \begin{split}
  F_f^{(0)}\;&=\;q_f\,(\alpha+{\textstyle\frac{\sqrt{\lambda}}{4\pi}})+ \\
  & \qquad +\frac{1}{\pi\sqrt{2}\,}\lim_{R\to +\infty}\lim_{T\to +\infty}\int_0^{T}\ud t\,t^{-\frac{3}{4}}\,\Big(c_{R,f}(t)-c_f(t)\big({\textstyle \frac{2}{\pi}\arctan\frac{R}{\sqrt{\lambda+t}}}\big)\Big) \\
  & \qquad -\frac{2\sqrt{2}}{\,(2\pi)^3}\lim_{R\to +\infty}\lim_{T\to +\infty}\int_0^{T}\ud t\,\frac{\,t^{\frac{1}{4}}c_f(t)}{\,4\pi\alpha+\sqrt{\lambda+t}\,}\;\times \\
  & \qquad\qquad\qquad\qquad\times\textstyle\Big(\int_{|p|<R}\frac{\ud p}{\,(p^2+\lambda)(p^2+\lambda+t)\,}-\frac{4\pi}{\sqrt{\lambda+t}+\sqrt{\lambda}}\,\arctan\frac{R}{\sqrt{\lambda+t}}\Big)\Big\}\,,
 \end{split}
\]
which implies (i) as long as one proves that
\[\tag{vii}
 \lim_{R\to +\infty}\lim_{T\to +\infty}\int_0^{T}\ud t\,t^{-\frac{3}{4}}\,\Big(c_f(t)\big({\textstyle \frac{2}{\pi}\arctan\frac{R}{\sqrt{\lambda+t}}}\big)-c_{R,f}(t)\Big)\;=\;0
\]
and
\[\tag{viii}
\begin{split}
  \lim_{R\to +\infty}&\lim_{T\to +\infty}\int_0^{T}\ud t\,\frac{\,t^{\frac{1}{4}}c_f(t)}{\,4\pi\alpha+\sqrt{\lambda+t}\,}\;\times \\
  &\qquad \times \textstyle\Big(\int_{|p|<R}\frac{\ud p}{\,(p^2+\lambda)(p^2+\lambda+t)\,}-\frac{4\pi}{\sqrt{\lambda+t}+\sqrt{\lambda}}\,\arctan\frac{R}{\sqrt{\lambda+t}}\Big)\Big\}\;=\;0\,.
\end{split}
\]

Last, let us establish (vii) and (viii), thus completing the proof. One has
\[
 \begin{split}
  &\int_0^{T}\ud t\,t^{-\frac{3}{4}}\,c_{R,f}(t)\;=\;\frac{1}{\,(2\pi)^{\frac{3}{2}}}\int_0^{T}\ud t\,t^{-\frac{3}{4}}\int_{|p|<R}\ud p\,\frac{\,(p^2+\lambda)^{\frac{3}{4}}\widehat{f}(p)}{p^2+\lambda+t} \\
  &\;\;\xrightarrow[]{\;T\to +\infty\;}\;\frac{1}{\,(2\pi)^{\frac{3}{2}}}\int_0^{+\infty}\!\!\ud t\,t^{-\frac{3}{4}}\int_{|p|<R}\ud p\,\frac{\,(p^2+\lambda)^{\frac{3}{4}}\widehat{f}(p)}{p^2+\lambda+t}\;=\;\int_0^{+\infty}\!\!\ud t\,t^{-\frac{3}{4}}\,c_{R,f}(t)
 \end{split}
\]
by dominated convergence, thanks to the uniform-in-$T$ summable majorant function $t\mapsto \mathrm{const}(R)\cdot t^{-\frac{3}{4}}(\lambda+t)^{-1}$. One also has
\[
 \begin{split}
  &\int_0^{T}\ud t\,t^{-\frac{3}{4}}\,c_f(t)\big({\textstyle \frac{2}{\pi}\arctan\frac{R}{\sqrt{\lambda+t}}}\big)\;\xrightarrow[]{\;T\to +\infty\;}\;\int_0^{+\infty}\!\!\ud t\,t^{-\frac{3}{4}}\,c_f(t)\big({\textstyle \frac{2}{\pi}\arctan\frac{R}{\sqrt{\lambda+t}}}\big)
 \end{split}
\]
by dominated convergence, thanks to the bound $\arctan(\frac{R}{\sqrt{\lambda+t}})\leqslant\frac{R}{\sqrt{\lambda+t}}$ and hence to the uniform-in-$T$ summable majorant function $t\mapsto R\, t^{-\frac{3}{4}}(\lambda+t)^{-\frac{1}{2}}$. Thus,
\[
 \begin{split}
  & \lim_{R\to +\infty}\lim_{T\to +\infty}\int_0^{T}\ud t\,t^{-\frac{3}{4}}\,\Big(c_f(t)\big({\textstyle \frac{2}{\pi}\arctan\frac{R}{\sqrt{\lambda+t}}}\big)-c_{R,f}(t)\Big)\;= \\
  &\qquad\qquad =\; \lim_{R\to +\infty}\int_0^{+\infty}\!\!\ud t\,t^{-\frac{3}{4}}\,\Big(c_f(t)\big({\textstyle \frac{2}{\pi}\arctan\frac{R}{\sqrt{\lambda+t}}}\big)-c_{R,f}(t)\Big)
 \end{split}
\]
Now, since $c_{R,f}(t)\nearrow c_f(t)$ and $\frac{2}{\pi}\arctan\frac{R}{\sqrt{\lambda+t}}\nearrow 1$ as $R\to +\infty$, the functions
\[
 t\mapsto t^{-\frac{3}{4}}\,\Big(c_f(t)\big({\textstyle \frac{2}{\pi}\arctan\frac{R}{\sqrt{\lambda+t}}}\big)-c_{R,f}(t)\Big)
\]
form a decreasing-in-$R$ net of summable functions, whose point-wise limit as $R\to +\infty$ is the null function. Therefore, by monotone convergence,
\[
 \lim_{R\to +\infty}\int_0^{+\infty}\!\!\ud t\,t^{-\frac{3}{4}}\,\Big(c_f(t)\big({\textstyle \frac{2}{\pi}\arctan\frac{R}{\sqrt{\lambda+t}}}\big)-c_{R,f}(t)\Big)\;=\;0
\]
and (vii) is proved.

Concerning (viii), with analogous bounds as above one  takes the limit $T\to +\infty$ based on dominated convergence. In order to take the limit $R\to +\infty$ in the resulting quantity
\[
 \int_0^{+\infty}\!\!\ud t\,\frac{\,t^{\frac{1}{4}}c_f(t)}{\,4\pi\alpha+\sqrt{\lambda+t}\,}\textstyle\Big(\int_{|p|<R}\frac{\ud p}{\,(p^2+\lambda)(p^2+\lambda+t)\,}-\frac{4\pi}{\sqrt{\lambda+t}+\sqrt{\lambda}}\,\arctan\frac{R}{\sqrt{\lambda+t}}\Big)
\]
one observes that
\[
 \begin{split}
  &\textstyle\Big(\frac{4\pi}{\sqrt{\lambda+t}+\sqrt{\lambda}}\,\arctan\frac{R}{\sqrt{\lambda+t}}-\int_{|p|<R}\frac{\ud p}{\,(p^2+\lambda)(p^2+\lambda+t)\,}\Big)\;= \\
  &\qquad = \;\textstyle\frac{4\pi}{\sqrt{\lambda+t}+\sqrt{\lambda}}\,\arctan\frac{R}{\sqrt{\lambda+t}}-\frac{1}{t} \Big(\sqrt{\lambda+t}\,\arctan\frac{R}{\sqrt{\lambda+t}}-\sqrt{\lambda}\,\arctan\frac{R}{\sqrt{\lambda}}\Big) \\
   &\qquad=\;\frac{\sqrt{\lambda}}{t}{\textstyle\Big(\arctan\frac{R}{\sqrt{\lambda}}- \arctan\frac{R}{\sqrt{\lambda+t}}\Big)}\;\leqslant\;\frac{\pi\sqrt{\lambda}}{t}\,,
 \end{split}
\]
which shows that the integrand vanishes point-wise in $t$ as $R\to +\infty$ and is bounded by a uniformly-in-$R$ integrable function: then dominated convergence applies and (viii) is also proved.
\end{proof}

\section{Fractional maps}\label{sec:fractional_maps}


In this Section we revisit part of the results of Sections \ref{sec:canonical-decomposition}-\ref{sec:subspaces_of_Ds} relative to the regime $s\in(\frac{1}{2},2)$ in terms of certain linear maps which it is very natural to introduce and which provide a more compact formulation.

For $s\in(\frac{1}{2},2)$, we define the linear maps
\begin{equation}
\mathcal{R}_s:H^s(\mathbb{R}^3)\to H^s(\mathbb{R}^3)\,,\qquad \mathcal{R}_s f\;:=\;f+w_f
\end{equation}
and
\begin{equation}
\mathcal{Q}_s:H^s(\mathbb{R}^3)\to \mathbb{C}\,,\qquad \mathcal{Q}_s f\;:=\;q_f\,,
\end{equation}
where $w_f$ is the function defined in \eqref{eq:w} and $q_f$ is the constant defined in \eqref{eq:qs}, for given $\alpha\geqslant 0$ and $\lambda>0$. Owing to Lemma \ref{lem:qf_scalar_product} and Proposition \ref{lem:w_in_Hs}, both maps are bounded:
\begin{equation}\label{eq:fractional_maps_are_bounded}
\|\mathcal{R}_s f\|_{H^s}\;\lesssim\;\|f\|_{H^s}\,,\qquad |\mathcal{Q}_s f|\;\lesssim\;\frac{1}{\,1+\alpha\,}\|f\|_{H^s}\,.
\end{equation}

As a consequence of Propositions \ref{prop:canonical_decomp} and \ref{lem:w_in_Hs},
\begin{equation}\label{eq:doms_RQ}
\mathcal{D}((-\Delta_\alpha)^{s/2})\;=\;\{\mathcal{R}_sf+(\mathcal{Q}_s f)G_\lambda\,|\,f\in H^s(\mathbb{R}^3)\}\,,
\end{equation}
that is, when $f$ spans $H^s(\mathbb{R}^3)$, $\mathcal{R}_sf$ spans all possible regular components and $(\mathcal{Q}_sf)G_\lambda$ spans all possible singular components of the elements of $\mathcal{D}((-\Delta_\alpha)^{s/2})$.

It is also convenient to write
\begin{equation}
\mathcal{R}_s\;=\;\mathbbm{1}-\mathcal{W}_s\,,\qquad \mathcal{W}_sf\;:=\;-w_f\,.
\end{equation}
The linear map $\mathcal{W}_s:H^s(\mathbb{R}^3)\to H^s(\mathbb{R}^3)$ is bounded, because of Proposition \ref{lem:w_in_Hs}.

%

\begin{proposition}\label{prop:RsQs}~
\begin{itemize}
 \item[(i)] When $s\in(\frac{1}{2},\frac{3}{2})$, the maps $\mathcal{R}_s$ and $\mathcal{Q}_s$ are surjective and not injective; moreover, there are functions in $\ker \mathcal{R}_s$ that do not belong to $\ker \mathcal{Q}_s$ and vice versa.
 \item[(ii)] Explicitly, when $s\in(\frac{1}{2},\frac{3}{2})$, the non-zero $H^s$-function
 \begin{equation}
 f_\star\;:=\;(-\Delta+\lambda\mathbbm{1})^{-s/2}J_\lambda\,,
 \end{equation}
 where $J_\lambda$ is the function defined in \eqref{eq:defJ-phi},
 satisfies
 \begin{equation}\label{eq:f*results}
 \mathcal{R}_s f_\star\;=\;0\,,\qquad\mathrm{and}\qquad\mathcal{Q}_s f_\star\;=\;1\,.
 \end{equation}
 \item[(iii)] For any $s\in(\frac{1}{2},2)$,
 \begin{equation}\label{eq:kernelQs}
  \ker\mathcal{Q}_s\;=\;(-\Delta+\lambda\mathbbm{1})^{-s/2}\big(\,\{\mathcal{G}_\lambda\}^{\perp}\big)
 \end{equation}
 in the sense of $L^2$-orthogonality, where $\mathcal{G}_\lambda$ is the function defined in \eqref{eq:GGlambda}.
 \item[(iv)] When $s=\frac{3}{2}$, $\mathcal{R}_{3/2}$ is injective and not surjective, whereas $\mathcal{Q}_s$ is surjective and not injective.
 \item[(v)] When $s\in(\frac{3}{2},2)$, $\mathcal{R}_s$ is surjective and injective, hence a bijection in $H^s(\mathbb{R}^3)$, whereas $\mathcal{Q}_s$ is surjective and not injective.
\end{itemize}
\end{proposition}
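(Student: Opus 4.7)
I would begin with (iii), which is immediate from the scalar-product representation $\mathcal{Q}_s f = \langle \mathcal{G}_\lambda, (-\Delta+\lambda\mathbbm{1})^{s/2} f\rangle$ supplied by Lemma \ref{lem:qf_scalar_product}: vanishing of $\mathcal{Q}_s f$ translates literally into $(-\Delta+\lambda\mathbbm{1})^{s/2} f \perp \mathcal{G}_\lambda$, giving \eqref{eq:kernelQs} at once. For (ii), I would invoke Proposition \ref{prop:eq:Dalph_on_Glam} to identify $J_\lambda = (-\Delta_\alpha+\lambda\mathbbm{1})^{s/2} G_\lambda$ on $s \in (0, \tfrac{3}{2})$: the function $f_\star = (-\Delta+\lambda\mathbbm{1})^{-s/2} J_\lambda$ is then precisely the regular component of $G_\lambda$ in the canonical decomposition of Proposition \ref{prop:canonical_decomp}, so writing that decomposition for $g = G_\lambda$ yields $G_\lambda = \mathcal{R}_s f_\star + (\mathcal{Q}_s f_\star) G_\lambda$. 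Since $\mathcal{R}_s f_\star \in H^s$ while $G_\lambda \notin H^s$ for $s > \tfrac{1}{2}$, uniqueness of the direct-sum representation $H^s \dotplus \mathrm{span}\{G_\lambda\}$ forces $\mathcal{R}_s f_\star = 0$ and $\mathcal{Q}_s f_\star = 1$; moreover $f_\star \neq 0$ because $\widehat{J_\lambda}(p) > 0$ (see \eqref{eq:defJ}).

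For (i), the starting point will be Proposition \ref{prop:Hs_in_domDalph}(i), which places $H^s(\mathbb{R}^3) \subset \mathcal{D}((-\Delta_\alpha)^{s/2})$ for $s \in (\tfrac{1}{2}, \tfrac{3}{2})$. Applying the canonical decomposition to any $F \in H^s$ gives $F = \mathcal{R}_s f_F + (\mathcal{Q}_s f_F) G_\lambda$, and the same $H^s$-vs-$G_\lambda$ matching forces $\mathcal{R}_s f_F = F$ and $\mathcal{Q}_s f_F = 0$, simultaneously producing surjectivity of $\mathcal{R}_s$ and, for every nonzero $F$, an element of $\ker \mathcal{Q}_s \setminus \ker \mathcal{R}_s$. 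Surjectivity of $\mathcal{Q}_s$ onto $\mathbb{C}$ is immediate from $\mathcal{Q}_s f_\star = 1$; non-injectivity of $\mathcal{R}_s$ follows from $f_\star \in \ker \mathcal{R}_s \setminus \{0\}$, and non-injectivity of $\mathcal{Q}_s$ from its codimension-one kernel.

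For (iv) and (v), the pivotal new input will be $G_\lambda \notin \mathcal{D}((-\Delta_\alpha)^{s/2})$ for $s \geq \tfrac{3}{2}$, from \eqref{eq:Dalph_on_Glam2}. The correspondence $\Phi : f \mapsto \mathcal{R}_s f + (\mathcal{Q}_s f) G_\lambda$ is a bijection from $H^s$ onto $\mathcal{D}((-\Delta_\alpha)^{s/2})$, with inverse $g \mapsto f_g$ from Proposition \ref{prop:canonical_decomp}. If $\mathcal{R}_s f = 0$, then $\Phi(f) = (\mathcal{Q}_s f) G_\lambda$ lies in $\mathcal{D}$ only when $\mathcal{Q}_s f = 0$, forcing $\Phi(f) = 0$ and hence $f = 0$. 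This yields injectivity of $\mathcal{R}_s$ for both $s = \tfrac{3}{2}$ and $s \in (\tfrac{3}{2}, 2)$. For non-surjectivity at $s = \tfrac{3}{2}$ I would read off from Proposition \ref{prop:high_s_in_Ds}(ii) that $\mathcal{R}_{3/2}(H^{3/2}) = \mathcal{D}_0^{(3/2)} \subsetneq H^{3/2}$, a proper inclusion since a generic $H^{3/2}$-function fails to satisfy $\widehat{F} \in L^1$. Surjectivity and non-injectivity of $\mathcal{Q}_s$ at $s \geq \tfrac{3}{2}$ follow from (iii) together with $\mathcal{G}_\lambda \neq 0$: the bounded functional is not identically zero (hence surjective onto $\mathbb{C}$) and its kernel has codimension one.

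The hard part will be surjectivity of $\mathcal{R}_s$ on all of $H^s$ for $s \in (\tfrac{3}{2}, 2)$. Given $F \in H^s$ (automatically continuous by Sobolev embedding), I would propose the candidate $g_F := F + \frac{F(0)}{\,\alpha+\frac{\sqrt{\lambda}}{4\pi}\,} G_\lambda$ and approximate $F$ by $F_n \in \mathcal{S}(\mathbb{R}^3)$ in $H^s$; Proposition \ref{prop:high_s_in_Ds}(i) then places each $g_{F_n}$ in $\mathcal{D}((-\Delta_\alpha)^{s/2})$. The plan is to show $\{g_{F_n}\}$ is Cauchy in the graph norm of $(-\Delta_\alpha+\lambda\mathbbm{1})^{s/2}$, so that closedness of this self-adjoint operator places $g_F$ in $\mathcal{D}$ and its canonical decomposition identifies $F = \mathcal{R}_s(\Phi^{-1} g_F)$. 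The main obstacle is the cancellation analysis: neither $I_F$ nor $J_\lambda$ individually belongs to $L^2$ when $s > \tfrac{3}{2}$, and one needs a quantitative $L^2$-bound on $I_{F_n - F_m} + \frac{(F_n - F_m)(0)}{\,\alpha+\frac{\sqrt{\lambda}}{4\pi}\,} J_\lambda$ in terms of $\|F_n - F_m\|_{H^s}$, through the cancellation identity $F^{(0)} - t\kappa_F(t) = (2\pi)^{-3/2}\int \widehat{F}(q)(q^2+\lambda)(q^2+\lambda+t)^{-1}\,\ud q$ used in the proof of Proposition \ref{prop:high_s_in_Ds}(i). Upgrading that Schwartz-class bound to a uniform $H^s$-estimate is the technical heart of the whole proposition.
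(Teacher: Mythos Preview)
Your arguments for parts (i)--(iv) and for the injectivity of $\mathcal{R}_s$ in (v) coincide with the paper's: the same canonical-decomposition bijection $\Phi$, the same use of $G_\lambda\in\mathcal{D}$ versus $G_\lambda\notin\mathcal{D}$ to separate the regimes, and the same appeal to Lemma~\ref{lem:qf_scalar_product} for (iii) and Proposition~\ref{prop:high_s_in_Ds}(ii) for non-surjectivity at $s=\tfrac{3}{2}$.

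The one substantive divergence is in how you propose to establish \emph{surjectivity} of $\mathcal{R}_s$ for $s\in(\tfrac{3}{2},2)$. You plan to build, for each $F\in H^s$, the candidate domain element $g_F=F+\frac{F(0)}{\alpha+\sqrt{\lambda}/4\pi}G_\lambda$, approximate by Schwartz functions, and close up via a quantitative cancellation bound $\|I_F+\frac{F(0)}{\alpha+\sqrt{\lambda}/4\pi}J_\lambda\|_2\lesssim\|F\|_{H^s}$. This is feasible (a Cauchy--Schwarz estimate on $F^{(0)}-t\kappa_F(t)$ followed by a Schur-type bound does yield the $(p^2+\lambda)^{-1}$ decay needed), but it is genuinely more work than the paper's route. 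The paper instead observes that $\mathcal{R}_s=\mathbbm{1}-\mathcal{W}_s$ and that the \emph{explicit} constant in the already-proved bound \eqref{eq:Hsl_norm_w},
\[
\|\mathcal{W}_s f\|_{H^s_\lambda}\;\leqslant\;\frac{\sqrt{2}\,\sin\frac{s\pi}{2}}{\sin(\frac{s\pi}{2}-\frac{\pi}{4})}\,\|f\|_{H^s_\lambda}\,,
\]
drops strictly below $1$ precisely when $s>\tfrac{3}{2}$; invertibility of $\mathcal{R}_s$ then follows by a Neumann series. Your approach has the merit of building $\mathcal{D}_0^{(s)}=H^s$ directly (and would dovetail with the proof of Theorem~\ref{thm:domain_s}(iii)), but the paper's contraction argument is a one-line consequence of material already in hand and explains transparently why $s=\tfrac{3}{2}$ is the threshold.
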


\begin{proof} (i) From \eqref{eq:doms_RQ} and from the fact that $H^s(\mathbb{R}^3)\subset \mathcal{D}((-\Delta_\alpha)^{s/2})$ (Proposition \ref{prop:Hs_in_domDalph}(i)) it follows that $\mathcal{R}_s$ is surjective and $\mathcal{Q}_s$ is not injective, and that there exist $f$'s in $H^s(\mathbb{R}^3)$ for which $\mathcal{R}_sf\neq 0$ whereas $\mathcal{Q}_sf=0$. From \eqref{eq:doms_RQ} again and from the fact that $\mathrm{span}\{G_\lambda\}\subset \mathcal{D}((-\Delta_\alpha)^{s/2})$ it follows that $\mathcal{Q}_s$ is surjective and $\mathcal{R}_s$ is not injective, and that there exist $f$'s in $H^s(\mathbb{R}^3)$ for which $\mathcal{Q}_sf\neq 0$ whereas $\mathcal{R}_sf=0$.

(ii) Owing to \eqref{eq:action_Dalpha},
 \[
 (-\Delta_\alpha+\lambda\mathbbm{1})^{-s/2}J_\lambda\;=\;(-\Delta_\alpha+\lambda\mathbbm{1})^{-s/2}(-\Delta+\lambda\mathbbm{1})^{s/2}f_\star\;\in\;\mathcal{D}((-\Delta_\alpha)^{s/2})\,,
 \]
 whence also, owing to \eqref{eq:Dalph_on_Glam}, as well as to \eqref{eq:gdecomp}, \eqref{eq:reg_comp_fg}, and \eqref{eq:action_Dalpha},
 \[
 G_\lambda\;=\;(-\Delta_\alpha+\lambda\mathbbm{1})^{-s/2}J_\lambda\;=\;\mathcal{R}_sf_\star+(\mathcal{Q}_s f_\star)G_\lambda\,,
 \]
 from which \eqref{eq:f*results} follows.

(iii) The identity \eqref{eq:kernelQs} is precisely equation \eqref{eq:vanishing_qf} proved in Lemma \ref{lem:qf_scalar_product}.

(iv)-(v) The surjectivity of $\mathcal{Q}_s$ is obvious, and its non-injectivity is proved in general in part (iii) above.

For the injectivity of $\mathcal{R}_s$ when $s\in[\frac{3}{2},2)$ we exploit the fact, encoded in \eqref{eq:doms_RQ}, that if $f\in H^s(\mathbb{R}^3)$, then $g:=\mathcal{R}_sf+(\mathcal{Q}_s f)G_\lambda$ is an element of $\mathcal{D}((-\Delta_\alpha)^{s/2})$ and \eqref{eq:action_Dalpha} implies that $f=(-\Delta+\lambda\mathbbm{1})^{-s/2}(-\Delta_\alpha+\lambda\mathbbm{1})^{s/2}g$. Therefore, if $\mathcal{R}_sf=0$, then necessarily $\mathcal{Q}_s f=0$ (for otherwise $G_\lambda$ would belong to $\mathcal{D}((-\Delta_\alpha)^{s/2})$ for $s\geqslant\frac{3}{2}$, which is forbidden by Proposition \ref{prop:eq:Dalph_on_Glam}), whence also $g=0$ and then $f=0$: $\mathcal{R}_s$ is injective.

The lack of surjectivity of $\mathcal{R}_{3/2}$ is a consequence of Proposition \ref{prop:high_s_in_Ds}(ii), as is evident from comparing the expressions \eqref{eq:Ds_for_s32} and \eqref{eq:doms_RQ} for $\mathcal{D}((\Delta_\alpha)^{3/4})$, taking into account that $\mathcal{D}^{(3/2)}_0\varsubsetneq H^\frac{3}{2}(\mathbb{R}^3)$.


When $s\in(\frac{3}{2},2)$ one can prove the invertibility of $\mathcal{R}_s=\mathbbm{1}-\mathcal{W}_s$ as a bijection on $H^s(\mathbb{R}^3)$ by means of the following 
argument. The bound \eqref{eq:Hsl_norm_w} found in Proposition \ref{lem:w_in_Hs} in the present notation reads
\[
 \|(p^2+\lambda)^{\frac{s}{2}}\,\widehat{\mathcal{W}_sf}\|_2\;\leqslant\;\frac{\sqrt{2}\,\sin\frac{s\pi}{2}}{\:\sin(\frac{s\pi}{2}-\frac{\pi}{4})}\,\|(p^2+\lambda)^{\frac{s}{2}}\widehat{f}\|_2\qquad\forall f\in H^s(\mathbb{R}^3)\,.
\]
Since
\[
s\;\longmapsto\; \frac{\sqrt{2}\,\sin\frac{s\pi}{2}}{\:\sin(\frac{s\pi}{2}-\frac{\pi}{4})}
\]
is continuous and strictly monotone decreasing, attaining the value 1 at $s=\frac{3}{2}$, then for $s\in(\frac{3}{2},2)$ the map $\mathcal{F}\mathcal{W}_s\mathcal{F}^{-1}$ (where $\mathcal{F}:L^2(\mathbb{R}^3,\ud x)\to L^2(\mathbb{R}^3,\ud p)$ is the Fourier transform, inherited also on $H^s(\mathbb{R}^3,\ud x)$) is bounded on the space $L^2(\mathbb{R}^3,(p^2+\lambda)^s\ud p)$ \emph{with norm strictly smaller than 1}. As a consequence, $\mathcal{F}\mathcal{R}_s\mathcal{F}^{-1}=\mathbbm{1}-\mathcal{F}\mathcal{W}_s\mathcal{F}^{-1}$ is a bijection on such space. Using an obvious isomorphism $L^2(\mathbb{R}^3,(p^2+\lambda)^s\ud p)\stackrel{\cong}{\longmapsto}L^2(\mathbb{R}^3,(p^2+1)^s\ud p)=\mathcal{F} H^s(\mathbb{R}^3,\ud x)$, one then concludes that the map $\mathcal{R}_s$ is a bijection on $H^s(\mathbb{R}^3,\ud x)$.
\end{proof}

%
%
%


\section{Proofs of the main results and transition behaviours}\label{sec:proofs}

\begin{proof}[Proof of Theorem \ref{thm:domain_s}]~

(i) Case $s\in(0,\frac{1}{2})$. Let $g\in\mathcal{D}((-\Delta_\alpha)^{s/2})$. Owing to Proposition  \ref{prop:canonical_decomp}, 
$g=f_g+h_g$ with $f_g\in H^s(\mathbb{R}^3)$ given by \eqref{eq:reg_comp_fg} and $h_g$ given by \eqref{eq:sing_comp_hg}. In Proposition \ref{pro:h_in_Hs} we established that $h_g\in H^s(\mathbb{R}^3)$ too, therefore $\mathcal{D}((-\Delta_\alpha)^{s/2})\subset H^s(\mathbb{R}^3)$. Conversely, in Proposition \ref{prop:Hs_in_domDalph}(i) we established that $\mathcal{D}((-\Delta_\alpha)^{s/2})\supset H^s(\mathbb{R}^3)$. The conclusion is the identity \eqref{eq:Ds_s0-1/2}.

(ii) Case $s\in(\frac{1}{2},\frac{3}{2})$. Again, owing to Proposition  \ref{prop:canonical_decomp}, a generic $g\in\mathcal{D}((-\Delta_\alpha)^{s/2})$ decomposes as
$g=f_g+h_g$ with $f_g\in H^s(\mathbb{R}^3)$ given by \eqref{eq:reg_comp_fg} and $h_g$ given by \eqref{eq:sing_comp_hg}. In Proposition \ref{lem:w_in_Hs} we established that $h_g=q_{f_g}G_\lambda+w_{f_g}$ for some $q_{f_g}\in\mathbb{C}$ and some $w_{f_g}\in H^s(\mathbb{R}^3)$. Therefore, $\mathcal{D}((-\Delta_\alpha)^{s/2})\subset H^s(\mathbb{R}^3)+\mathrm{span}\{G_\lambda\}$. Conversely, in Propositions \ref{prop:eq:Dalph_on_Glam} and  \ref{prop:Hs_in_domDalph}(i) we established the opposite inclusion \eqref{eq:Ds_supset_Hs_Gl}. The conclusion is the identity \eqref{eq:Ds_s1/2-3/2}.


(iii) Case $s\in(\frac{3}{2},2)$. Owing to Propositions \ref{prop:canonical_decomp} and \ref{lem:w_in_Hs}, $\mathcal{D}((-\Delta_\alpha)^{s/2})$ consists exactly of elements of the form $f+q_fG_\lambda+w_f$, obtained by letting $f$ span the whole $H^s(\mathbb{R}^3)$ and by taking $w_f$ and $q_f$ according to \eqref{eq:w} and \eqref{eq:qs}. (With the same argument as in the proof of part (ii), this  allows one to deduce again $\mathcal{D}((-\Delta_\alpha)^{s/2})\subset H^s(\mathbb{R}^3)\dotplus\mathrm{span}\{G_\lambda\}$, however the latter is now a strict inclusion, as established in Propositions \ref{prop:eq:Dalph_on_Glam} and \ref{prop:Hs_in_domDalph}(ii).) It follows from Proposition \ref{lem:w_in_Hs} that $F_f:=f+w_f\in H^s(\mathbb{R}^3)$ and it follows from Lemma \ref{lem:qf_controlledby_F0} that $F_\lambda$ is a continuous function on $\mathbb{R}^3$ with $F_f(0)=(\alpha+\frac{\sqrt{\lambda}}{4\pi})q_f$. Thus,
\[
\mathcal{D}((-\Delta_\alpha)^{s/2})\:\subset\:\Big\{F+\frac{F(0)}{\,\alpha+\frac{\sqrt{\lambda}}{4\pi}\,}G_\lambda\,\Big|\,F\in H^s(\mathbb{R}^3)\Big\}\,.
\]
Conversely, we established in Proposition \ref{prop:high_s_in_Ds} that
\[
 \mathcal{D}((-\Delta_\alpha)^{s/2})\;\supset\;\Big\{F+\frac{F(0)}{\,\alpha+\frac{\sqrt{\lambda}}{4\pi}\,}G_\lambda\,\Big|\,F\in H^s(\mathbb{R}^3)\Big\}\,,
\]
because in this regime of $s$ the space $\mathcal{D}_0^{(s)}$ used in Proposition \ref{prop:high_s_in_Ds} is the whole $H^s(\mathbb{R}^3)$ and $F^{(0)}=F(0)$. The conclusion is the identity \eqref{eq:Ds_s3/2-2}. Alternatively, in the equivalent language of the fractional maps introduced in Section \ref{sec:fractional_maps}, one argues as follows: according to \eqref{eq:doms_RQ},
\[
\mathcal{D}((-\Delta_\alpha)^{s/2})\;=\;\{\mathcal{R}_sf+(\mathcal{Q}_s f)G_\lambda\,|\,f\in H^s(\mathbb{R}^3)\}\,,
\]
Lemma \ref{lem:qf_controlledby_F0} reads
\[
\mathcal{Q}_s f\;=\;\frac{\:(\mathcal{R}_sf)(0)\:}{\,\alpha+\frac{\sqrt{\lambda}}{2\pi}\,}\,,
\]
and Proposition \ref{prop:RsQs}(v) establishes that $\mathcal{R}_s:
H^s(\mathbb{R}^3)\to H^s(\mathbb{R}^3)$ is a bijection, which all together gives precisely the representation \eqref{eq:Ds_s3/2-2} for $\mathcal{D}((-\Delta_\alpha)^{s/2})$.
\end{proof}

\begin{proof}[Proof of Theorem \ref{thm:equiv_of_norms}]~

(i) Case $s\in(0,\frac{1}{2})$. The bound $\|g\|_{H^s_\alpha}\lesssim\|g\|_{H^s}$ was proved in \eqref{eq:equiv_Hs_norm} of Proposition \eqref{prop:Hs_in_domDalph}(i). As for the opposite bound, Proposition \ref{prop:canonical_decomp} implies that $g=f_g+h_g$ and $\|g\|_{H^s_\alpha}=\|(-\Delta+\lambda\mathbbm{1})^{s/2}f_g\|_2\approx\|f_g\|_{H^s}$, Proposition \ref{pro:h_in_Hs} implies that $\|h_g\|_{H^s}\lesssim\|f_g\|_{H^s}$, therefore $\|g\|_{H^s}\lesssim \|f_g\|_{H^s}=\|g\|_{H^s_\alpha}$.

(ii) Case $s\in(\frac{1}{2},\frac{3}{2})$. By means of the decomposition of Propositions  \ref{prop:canonical_decomp} and \ref{lem:w_in_Hs}, as well as the surjectivity of the map $f\mapsto f+w_f$ on $H^s(\mathbb{R}^3)$ (Proposition \ref{prop:RsQs}(i)), one has $g=F_{f_g}+q_{f_g} G_\lambda$ with $F_{f_g}={f_g}+w_{f_g}$, and 
$\|g\|_{H^s_\alpha}=\|(-\Delta+\lambda\mathbbm{1})^{s/2}{f_g}\|_2\approx\|{f_g}\|_{H^s}$.
Combining this norm equivalence with the bounds $\|F_{f_g}\|_{H^s}\lesssim\|{f_g}\|_{H^s}$ and $(1+\alpha)|q_{f_g}|\lesssim\|{f_g}\|_{H^s}$ (Lemma \ref{lem:qf_scalar_product} and Proposition \ref{lem:w_in_Hs}, i.e., eq.~\eqref{eq:fractional_maps_are_bounded}) one has $\|F_f\|_{H^s}+(1+\alpha)|q_{f_g}|\lesssim\|F_f+q_{f_g} G_\lambda\|_{H^s_\alpha}$. For the opposite inequality we write $\|F_{f_g}+q_{f_g} G_\lambda\|_{H^s_\alpha}\leqslant\|F_{f_g}\|_{H^s_\alpha}+|q_{f_g}|\|G_\lambda\|_{H^s_\alpha}$ and we use $\|F_{f_g}\|_{H^s_\alpha}\lesssim\|F_{f_g}\|_{H^s}$ (eq.~\eqref{eq:equiv_Hs_norm} in Proposition \ref{prop:Hs_in_domDalph}) and $\|G_\lambda\|_{H^s_\alpha}\lesssim(1+\alpha)$ (eq.~\eqref{eq:Dalph_on_Glam2} in Proposition \ref{eq:Dalph_on_Glam}), whence the conclusion.

(iii) Case $s\in(\frac{3}{2},2)$. Arguing as in part (ii), for $F_\lambda+{\textstyle\frac{F_\lambda(0)}{\,\alpha+\frac{\sqrt{\lambda}}{4\pi}\,}}\,G_\lambda$ one has $F_\lambda=f+w_f=R_s f$, $\frac{F_\lambda(0)}{\,\alpha+\frac{\sqrt{\lambda}}{4\pi}\,}=Q_s f$, and  $\big\|F_\lambda+{\textstyle\frac{F_\lambda(0)}{\,\alpha+\frac{\sqrt{\lambda}}{4\pi}\,}}\,G_\lambda\big\|_{H^s_\alpha}\approx\|f\|_{H^s}$.
Since in the regime $s\in(\frac{3}{2},2)$ the map $R_s$ is invertible on $H^s(\mathbb{R}^3)$ (Proposition \ref{prop:RsQs}(v)), and hence also with bounded inverse, then $\|f\|_{H^s}\approx\|R_s f\|_{H^s}=\|F_\lambda\|_{H^s}$, which completes the proof.
\end{proof}

%
%

\begin{proof}[Proof of Theorem \ref{thm:fract_pow_formulas}]~

Part (i) was proved already in the end of Section \ref{sec:canonical-decomposition}. Part (ii) is entirely proved in Proposition \ref{prop:eq:Dalph_on_Glam} and Lemma \ref{lem:Jlambda}.
\end{proof}

The transition cases $s=\frac{1}{2}$ and $s=\frac{3}{2}$ are characterised as follows.

\begin{proposition}[Transition case $s=\frac{1}{2}$]\label{prop:domain_s_12}
Let $\alpha\geqslant 0$, $\lambda>0$, and $s=\frac{1}{2}$. Then
\begin{equation}\label{eq:Ds_s1/2}
\mathcal{D}((-\Delta_\alpha)^{1/4})\;=\;\{f+h_f\,|\,f\in H^{\frac{1}{2}}(\mathbb{R}^3)\}\,,
\end{equation}
where, for given $f$, $h_f$ is the $H^{\frac{1}{2}^-}$\!\!-function defined in \eqref{eq:cgeneral}-\eqref{eq:hgeneral} and discussed in Proposition \ref{pro:h_in_Hs}(ii). Moreover,
\begin{equation}\label{eq:control_domain_12}
 H^{\frac{1}{2}}(\mathbb{R}^3)\dotplus\mathrm{span}\{G_\lambda\}\;\varsubsetneq\;\mathcal{D}((-\Delta_\alpha)^{1/4})\;\subset\;H^{{\frac{1}{2}}^-}\!(\mathbb{R}^3)\,.
\end{equation}
\end{proposition}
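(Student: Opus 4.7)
The plan is to derive both \eqref{eq:Ds_s1/2} and \eqref{eq:control_domain_12} by specialising the canonical decomposition of Proposition \ref{prop:canonical_decomp} to $s=\frac{1}{2}$, with the main analytical content concentrated in the strictness of the left inclusion.

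First, \eqref{eq:Ds_s1/2} is essentially a direct corollary. Any $g\in\mathcal{D}((-\Delta_\alpha)^{1/4})$ decomposes as $g=f_g+h_g$ with $f_g:=(-\Delta+\lambda\mathbbm{1})^{-1/4}(-\Delta_\alpha+\lambda\mathbbm{1})^{1/4}g\in H^{1/2}(\mathbb{R}^3)$; relation \eqref{eq:action_Dalpha} implies $c_g(t)=c_{f_g}(t)$ in the notation of \eqref{eq:cgeneral}, so $h_g=h_{f_g}$, and the surjectivity of the map $g\mapsto f_g$ onto $H^{1/2}(\mathbb{R}^3)$ (Proposition \ref{prop:canonical_decomp}) yields the claimed set equality.

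For \eqref{eq:control_domain_12} the right inclusion is immediate, since $g=f+h_f$ with $f\in H^{1/2}(\mathbb{R}^3)\subset H^{1/2^-}(\mathbb{R}^3)$ and $h_f\in H^{1/2^-}(\mathbb{R}^3)$ by Proposition \ref{pro:h_in_Hs}(ii). The left inclusion combines Proposition \ref{prop:Hs_in_domDalph}(i) (valid at $s=\frac{1}{2}$) with Proposition \ref{prop:eq:Dalph_on_Glam}, the sum being direct since $G_\lambda\notin H^{1/2}(\mathbb{R}^3)$.

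The main effort is the strict inclusion on the left. The plan is to exhibit $f_\star\in H^{1/2}(\mathbb{R}^3)$ such that $h_{f_\star}-c\,G_\lambda\notin H^{1/2}(\mathbb{R}^3)$ for every $c\in\mathbb{C}$; then $g_\star:=f_\star+h_{f_\star}$ will be the required witness in $\mathcal{D}((-\Delta_\alpha)^{1/4})\setminus(H^{1/2}(\mathbb{R}^3)\dotplus\mathrm{span}\{G_\lambda\})$. A natural candidate is $\widehat{f_\star}(p):=\bigl[(1+|p|^2)\,\log^{3/4}(2+|p|^2)\bigr]^{-1}$: the weighted $L^2$-norm reduces in 3D radial coordinates to $\int^{\infty}d\rho/(\rho\log^{3/2}\rho)<+\infty$, confirming $f_\star\in H^{1/2}$. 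Starting from \eqref{eq:cgeneral-Fourier} and splitting the $p$-integration at $|p|\asymp\sqrt{t}$ is expected to yield the sharp asymptotic $c_{f_\star}(t)\asymp t^{-1/4}(\log t)^{-3/4}$ as $t\to+\infty$. Substituting this into \eqref{eq:hgeneral-Fourier} and splitting the $t$-integration at $t=p^2+\lambda$ should give $(p^2+\lambda)\,\widehat{h_{f_\star}}(p)\asymp(\log(p^2+\lambda))^{1/4}$ as $|p|\to+\infty$; since $(p^2+\lambda)\,\widehat{G_\lambda}(p)=(2\pi)^{-3/2}$ is a constant, this slowly divergent logarithmic factor cannot be absorbed by any single-coefficient subtraction of $G_\lambda$. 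Finally, evaluating $\int(1+|p|^2)^{1/2}|\widehat{h_{f_\star}}-c\widehat{G_\lambda}|^2\,dp$ one obtains a radial integrand $\sim(\log\rho)^{1/2}/\rho$ at infinity, whose integral diverges. Thus $h_{f_\star}-c\,G_\lambda\notin H^{1/2}(\mathbb{R}^3)$ for every $c$, establishing the strict inclusion.

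The principal technical obstacle will be performing the two-step asymptotic analysis with precise tracking of logarithmic factors, first for $c_{f_\star}(t)$ (via dyadic splitting of the radial $p$-integral, with care near $|p|\sim\sqrt{t}$ where $\log(2+|p|^2)$ transitions) and then for $\widehat{h_{f_\star}}(p)$ (via a splitting of the $t$-integral at the natural scale $t\sim p^2+\lambda$), and in verifying that no subleading contribution cancels the leading $(\log)^{1/4}$ growth in $(p^2+\lambda)\widehat{h_{f_\star}}(p)$.
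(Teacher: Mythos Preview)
Your argument is correct and follows the same overall route as the paper: \eqref{eq:Ds_s1/2} and the two non-strict inclusions in \eqref{eq:control_domain_12} are handled identically (canonical decomposition plus Propositions \ref{pro:h_in_Hs}(ii), \ref{prop:eq:Dalph_on_Glam}, \ref{prop:Hs_in_domDalph}(i)), and strictness is shown by producing an $f$ whose $h_f$ has a logarithmic factor on top of the $G_\lambda$-type decay $\langle p\rangle^{-2}$, so that no single multiple of $G_\lambda$ can restore $H^{1/2}$-regularity.

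The difference lies only in the choice of witness for the strict inclusion. The paper argues tersely that the functions $f$ with positive $\widehat{f}$ already considered in the proof of Proposition \ref{pro:h_in_Hs}(ii) satisfy $\widehat{h_f}(p)\approx\langle p\rangle^{-2}\ln\langle p\rangle$, which is logarithmically more singular than both $G_\lambda$ and any $H^{1/2}$-function. Your approach instead takes a concrete $\widehat{f_\star}(p)=[(1+|p|^2)\log^{3/4}(2+|p|^2)]^{-1}$, tracks the resulting asymptotics $c_{f_\star}(t)\asymp t^{-1/4}(\log t)^{-3/4}$ and $(p^2+\lambda)\widehat{h_{f_\star}}(p)\asymp(\log(p^2+\lambda))^{1/4}$ explicitly, and then checks the divergence of the $H^{1/2}$-integral directly. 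Your version is more self-contained and arguably more careful: a pure $\ln\langle p\rangle$ factor as asserted by the paper would correspond to $c_f(t)\sim t^{-1/4}$ without logarithmic loss, which is the borderline case not actually attained by any $f\in H^{1/2}(\mathbb{R}^3)$ (it would force $\widehat{f}(p)\sim|p|^{-2}$, just outside $H^{1/2}$). Your $(\log)^{-3/4}$ correction is exactly what is needed to keep $f_\star$ in $H^{1/2}$ while still producing an unbounded $(p^2+\lambda)\widehat{h_{f_\star}}(p)$. The cost is the two-step asymptotic analysis you flag; this is routine once the splitting scales are identified as you have done.
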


\begin{proof}
The first statement is an immediate consequence of the canonical decomposition \eqref{eq:gdecomp} of Proposition \ref{prop:canonical_decomp} and of the definition \eqref{eq:cgeneral}-\eqref{eq:hgeneral}. The inclusion $\mathcal{D}((-\Delta_\alpha)^{1/4})\subset H^{{\frac{1}{2}}^-}\!(\mathbb{R}^3)$ of \eqref{eq:control_domain_12} follows at once from the decomposition \eqref{eq:gdecomp} and from Proposition \ref{pro:h_in_Hs}(ii), whereas the inclusion $H^{1/2}(\mathbb{R}^3)\dotplus\mathrm{span}\{G_\lambda\}\subset\mathcal{D}((-\Delta_\alpha)^{1/4})$ is precisely the inclusion \eqref{eq:Ds_supset_Hs_Gl} for $s=\frac{1}{2}$, which follows from Propositions \ref{prop:eq:Dalph_on_Glam} and \ref{prop:Hs_in_domDalph}(ii). Last, in order to see that the latter inclusion is strict, we observe that in course of the proof of Proposition \ref{pro:h_in_Hs}(ii) certain non-zero functions $f\in H^{1/2}(\mathbb{R}^3)$ were considered for which $\widehat{h_f}(p)\approx \langle p \rangle^{-2}\ln\langle p\rangle$ as $|p|\to +\infty$, which is logarithmically more singular than $G_\lambda$ and than an $H^{\frac{1}{2}}$-function.
\end{proof}

\begin{proposition}[Transition case $s=\frac{3}{2}$]\label{prop:domain_s_32}
Let $\alpha\geqslant 0$, $\lambda>0$, and $s=\frac{3}{2}$. Then
  \begin{equation}\label{eq:Ds_for_s32_Prop}
 \mathcal{D}((-\Delta_\alpha)^{3/4})\;=\;\Big\{F+\frac{F^{(0)}}{\,\alpha+\frac{\sqrt{\lambda}}{4\pi}\,}\,G_\lambda\,\Big|\,F\in \mathcal{D}_0^{(3/2)}\Big\}\,,
 \end{equation}
 where
 \begin{equation}
 \mathcal{D}_0^{(3/2)}\;=\;\left\{ F\in H^{\frac{3}{2}}(\mathbb{R}^3)\left|\!\!
 \begin{array}{c}
  F^{(0)}:=\displaystyle\frac{1}{\;(2\pi)^{\frac{3}{2}}}\int_{\mathbb{R}^3}\!\ud p\,\widehat{F}(p)<+\infty \\
  I_F+\frac{F^{(0)}}{\,\alpha+\frac{\sqrt{\lambda}}{4\pi}\,}\,J_\lambda\in L^2(\mathbb{R}^3)
 \end{array}
 \!\!\!\right.\right\}\varsubsetneq\;H^{\frac{3}{2}}(\mathbb{R}^3)\,,
\end{equation}
$I_F$ is the function defined by \eqref{eq:IF}-\eqref{eq:IF_kF} for given $F$ and $s=\frac{3}{2}$, and $J_\lambda$ is the function defined by \eqref{eq:defJ}-\eqref{eq:defJ-phi}.
%
\end{proposition}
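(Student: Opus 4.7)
The plan is to observe that the set-theoretic identity \eqref{eq:Ds_for_s32_Prop} is literally the content of Proposition \ref{prop:high_s_in_Ds}(ii) read at $s=\frac{3}{2}$, so no new argument is needed for it: the inclusion $\supset$ follows from Proposition \ref{prop:high_s_in_Ds}(i) (valid throughout the range $s\in(\frac{1}{2},2)$), whereas the converse inclusion $\subset$ is precisely what the delicate truncation-based argument at the end of the proof of part (ii) of that same Proposition establishes. Thus the only new content in the present statement is the strict inclusion $\mathcal{D}_0^{(3/2)}\varsubsetneq H^{\frac{3}{2}}(\mathbb{R}^3)$, that is, the ``loss of genericity'' already signalled in the Remark following Proposition \ref{prop:high_s_in_Ds}.

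For this strictness my plan is to produce an explicit $F\in H^{\frac{3}{2}}(\mathbb{R}^3)$ whose Fourier transform is \emph{not} in $L^1(\mathbb{R}^3)$, so that the defining quantity $F^{(0)}$ is divergent and $F\notin\mathcal{D}_0^{(3/2)}$. A concrete candidate is any radial function $F$ with
\[
\widehat{F}(p)\;:=\;\frac{\chi_{\{|p|\geqslant 2\}}(p)}{\,|p|^3\,\ln|p|\,}\,.
\]
A short radial computation then shows
\[
\|F\|_{H^{3/2}}^2\;\approx\;\int_2^{+\infty}\!\frac{\ud r}{\,r\,(\ln r)^2\,}\;<\;+\infty\,,\qquad \|\widehat{F}\|_{L^1}\;\approx\;\int_2^{+\infty}\!\frac{\ud r}{\,r\,\ln r\,}\;=\;+\infty\,,
\]
which is everything one needs. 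The same $F$, inserted into any combination $F+q\,G_\lambda$, shows that such a combination cannot belong to $\mathcal{D}((-\Delta_\alpha)^{3/4})$, consistently with \eqref{eq:Ds_for_s32_Prop}.

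The main obstacle is not actually in the present Proposition but was already met and resolved in the proof of Proposition \ref{prop:high_s_in_Ds}(ii): the Sobolev-embedding shortcut of Lemma \ref{lem:qf_controlledby_F0} (which identifies $F_f(0)=(\alpha+\frac{\sqrt{\lambda}}{4\pi})\,q_f$ by the elementary fact $\widehat{F_f}\in L^1$) is valid only for $s>\frac{3}{2}$ and it breaks down exactly at the critical threshold $s=\frac{3}{2}$; in its place one is forced to run an iterated limit scheme in the momentum cutoff $R$ and in the spectral cutoff $T$. Once that identification is in hand, the present statement is essentially a repackaging of Proposition \ref{prop:high_s_in_Ds}(ii), and the strict-inclusion part is nailed by the elementary counterexample above; an alternative and structurally illuminating proof, as already mentioned in the Remark after Proposition \ref{prop:high_s_in_Ds}, is the one reserved for Appendix \ref{app:alternative_proofs}.
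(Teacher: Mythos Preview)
Your proposal is correct and matches the paper's approach: the paper's proof is literally the single line ``An immediate consequence of Proposition~\ref{prop:high_s_in_Ds}'', which is exactly what you say. Your explicit counterexample for the strict inclusion $\mathcal{D}_0^{(3/2)}\varsubsetneq H^{3/2}(\mathbb{R}^3)$ is a welcome addition that the paper leaves implicit.
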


\begin{proof}
 An immediate consequence of Proposition \ref{prop:high_s_in_Ds}.
\end{proof}

\appendix

\section{A general Schur bound}\label{app:Schur}

In this Appendix we establish, by means of the Schur test, the boundedness of an integral operator appearing frequently in our analysis.

We start with the following useful identities.

\begin{lemma}
For $a\in(0,1)$ and $R> 0$ one has
\begin{equation}\label{eq:useful_integral_repr}
R^{1-a}\!\int_0^{+\infty}\!\!\ud t\,\frac{\,t^{a-1}}{\:R+t\,}\;=\;\int_0^{+\infty}\!\!\ud t\,\frac{\,t^{a-1}}{\:1+t\,}\;=\;2\!\int_0^{+\infty}\!\!\ud t\, \frac{t^{2a-1}}{\,1+t^2}\;=\;\frac{\pi}{\sin a\pi}\,.
\end{equation}
\end{lemma}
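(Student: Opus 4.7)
The plan is to reduce all three integrals to the canonical Euler-type integral $\int_0^{+\infty} t^{a-1}/(1+t)\,\ud t$ by two elementary changes of variable, and then to evaluate this canonical integral by a classical identity from the theory of the Gamma function.

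For the first equality, I would rescale by $t=Rs$, so that $\ud t=R\,\ud s$ and $t^{a-1}=R^{a-1}s^{a-1}$, turning $\int_0^{+\infty}\!\ud t\,t^{a-1}/(R+t)$ into $R^{a-1}\!\int_0^{+\infty}\!\ud s\,s^{a-1}/(1+s)$; the pre-factor $R^{1-a}$ then eliminates $R^{a-1}$ and the first identity follows. For the second equality, the substitution $t=u^2$ in the canonical integral yields
\[
\int_0^{+\infty}\!\ud t\,\frac{t^{a-1}}{1+t}\;=\;2\!\int_0^{+\infty}\!\ud u\,\frac{u^{2a-1}}{1+u^2}\,,
\]
which is exactly what is stated.

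Only the last equality requires real content, and it amounts to the classical evaluation $\int_0^{+\infty}\!\ud t\,t^{a-1}/(1+t)=\pi/\sin(a\pi)$. I would first recognize this integral as the Beta function $B(a,1-a)$: the substitution $u=t/(1+t)$ maps $(0,+\infty)$ bijectively onto $(0,1)$ and transforms the integrand into $u^{a-1}(1-u)^{-a}$, so the integral equals $B(a,1-a)=\Gamma(a)\Gamma(1-a)$. Euler's reflection formula $\Gamma(a)\Gamma(1-a)=\pi/\sin(a\pi)$, valid precisely for $a\in(0,1)$, then produces the stated value. A self-contained alternative, avoiding any external special-function input, is to integrate $z^{a-1}/(1+z)$ along a keyhole contour around the positive real axis: the residue at $z=-1$ together with the jump factor $1-e^{2\pi i(a-1)}$ across the branch cut gives the same answer.

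There is no substantive obstacle here: the proof is essentially two changes of variable plus the reflection formula (equivalently, a standard keyhole computation), each of which is well within the scope of a brief calculation.
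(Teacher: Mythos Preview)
Your proposal is correct and follows essentially the same route as the paper: reduce to the canonical integral $\int_0^{+\infty}t^{a-1}/(1+t)\,\ud t$ by the obvious substitutions, then invoke $\Gamma(a)\Gamma(1-a)=\pi/\sin(a\pi)$. The only minor difference is in how that canonical integral is identified with $\Gamma(a)\Gamma(1-a)$: you use the Beta-function substitution $u=t/(1+t)$, whereas the paper writes $1/(1+t)=\int_0^{+\infty}e^{-\lambda(1+t)}\,\ud\lambda$ and swaps the order of integration; both are standard one-line arguments.
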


\begin{proof}
Upon obvious changes of variables, it suffices to prove
\[
 \int_0^{+\infty}\!\!\ud t\,\frac{\,t^{a-1}}{\:1+t\,}\;=\;\frac{\pi}{\sin a\pi}\,.
\]
From the representation
\[
\lambda^{-a}\;=\;\frac{1}{\Gamma(a)} \int_0^{+\infty}\!\!\ud t\, t^{a-1} e^{-\lambda t}\,, \qquad \lambda >0\,,
\]
derived by the integral representation of the Gamma function
\[
\Gamma (a)=\int _{0}^{+\infty }\!\!\ud t\,t^{a-1}\,e^{-t}\,,
\]
and from
\[
\Gamma(a) \Gamma(1-a)\;=\; \frac{\pi}{\sin(\pi a)}\,,
\]
we find
\[
\begin{split}
 \int_0^{+\infty}\!\!\ud t\,\frac{\,t^{a-1}}{\:1+t\,}\;&=\;\int_0^{+\infty}\!\!\ud t\, t^{a-1}\! \int_0^{+\infty}\!\!\ud \lambda\, e^{-\lambda (1+t)} \;=\;\int_0^{+\infty}\!\!\ud\lambda\,e^{-\lambda}\!\int_0^{+\infty}\!\!\ud t\,t^{a-1}\,e^{-\lambda t} \\
 &=\;\Gamma(a)\!\int_0^{+\infty}\!\!\ud\lambda\,e^{-\lambda}\,\lambda^{-a}\;=\;\Gamma(a)\Gamma(1-a)\;=\;\frac{\pi}{\sin a\pi}\,,
\end{split}
\]
which completes the proof.
\end{proof}

Based on the above integral formula and the Schur boundedness criterion, we establish the following.

\begin{proposition}\label{prop:Tbeta_operators}
For a given constant $\beta\in(-\frac{1}{2},\frac{1}{2})$ and a measurable function $f$ on $\mathbb{R}^+$, let
\begin{equation}\label{eq:defKbeta}
K_\beta(\varrho,t)\;:=\;\frac{\,t^\beta \varrho^{-\beta}}{\varrho+t}\,,\qquad \varrho,t>0\,,
\end{equation}
and
\begin{equation}\label{eq:defTbeta}
(T_\beta f)(\varrho)\;:=\;\int_0^{+\infty}\!\!\ud t\,K_\beta(\varrho,t)\,f(t)\,.
\end{equation}
Then
\begin{equation}
\int_0^{+\infty}\!\!\ud\varrho\,|(T_\beta f)(\varrho)|^2\;\leqslant\;\Big(\frac{\pi}{\cos\beta\pi}\Big)^2\!\int_0^{+\infty}\!\!\ud t\,|f(t)|^2\,,
\end{equation}
therefore $T_\beta$ defines a bounded linear map on $L^2(\mathbb{R}^+)$ with norm
\begin{equation}\label{eq:Tbeta_boundedness}
\|T_\beta\|_{L^2(\mathbb{R}^+)\to L^2(\mathbb{R}^+)}\;\leqslant\;\frac{\pi}{\cos\beta\pi}\,.
\end{equation}
\end{proposition}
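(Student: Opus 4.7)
The plan is to apply the classical Schur test with a symmetric pair of weights, and to reduce the two required integrals to the closed-form evaluation \eqref{eq:useful_integral_repr} just established. The hypothesis $\beta\in(-\frac{1}{2},\frac{1}{2})$ is precisely what makes the exponents appearing in the Schur integrals fall in the admissibility range $(0,1)$ of \eqref{eq:useful_integral_repr}, so the choice of weights is essentially forced.

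Concretely, I would take $w_1(\varrho):=\varrho^{-1/2}$ and $w_2(t):=t^{-1/2}$ and verify the two Schur inequalities
\[
 \int_0^{+\infty}\!K_\beta(\varrho,t)\,w_2(t)\,\mathrm{d}t\;\leqslant\;A\,w_1(\varrho)\,,\qquad \int_0^{+\infty}\!K_\beta(\varrho,t)\,w_1(\varrho)\,\mathrm{d}\varrho\;\leqslant\;B\,w_2(t)\,,
\]
so as to deduce $\|T_\beta\|_{L^2\to L^2}\leqslant\sqrt{AB}$.

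For the first inequality, change variable $t=\varrho s$ to obtain
\[
 \int_0^{+\infty}\!\frac{t^{\beta-1/2}\,\varrho^{-\beta}}{\varrho+t}\,\mathrm{d}t\;=\;\varrho^{-1/2}\!\int_0^{+\infty}\!\frac{s^{\beta-1/2}}{1+s}\,\mathrm{d}s\;=\;\frac{\pi}{\sin\!\left(\beta+\frac{1}{2}\right)\!\pi}\,\varrho^{-1/2}\;=\;\frac{\pi}{\cos\beta\pi}\,\varrho^{-1/2}\,,
\]
where I used \eqref{eq:useful_integral_repr} with $a=\beta+\frac{1}{2}\in(0,1)$ (here the hypothesis on $\beta$ enters decisively) and the identity $\sin(\theta+\frac{\pi}{2})=\cos\theta$. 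The second inequality is entirely symmetric: changing variable $\varrho=t s$ and using \eqref{eq:useful_integral_repr} with $a=\frac{1}{2}-\beta\in(0,1)$ gives
\[
 \int_0^{+\infty}\!\frac{t^{\beta}\,\varrho^{-\beta-1/2}}{\varrho+t}\,\mathrm{d}\varrho\;=\;t^{-1/2}\!\int_0^{+\infty}\!\frac{s^{-\beta-1/2}}{1+s}\,\mathrm{d}s\;=\;\frac{\pi}{\cos\beta\pi}\,t^{-1/2}\,.
\]
Thus $A=B=\pi/\cos\beta\pi$, and the Schur test yields exactly \eqref{eq:Tbeta_boundedness}.

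There is no real obstacle here: the non-trivial content of the statement, namely the sharp-looking constant $\pi/\cos\beta\pi$, is already encoded in the beta-type integral \eqref{eq:useful_integral_repr}, and the restriction $\beta\in(-\frac{1}{2},\frac{1}{2})$ is exactly the condition for convergence of both Schur integrals. The only small care needed is to set up the two weights symmetrically around the diagonal homogeneity of $K_\beta$; any asymmetric ansatz $w_1(\varrho)=\varrho^{a_1}$, $w_2(t)=t^{a_2}$ with $a_1\neq a_2$ fails to produce the pointwise bounds $A\,w_1$ and $B\,w_2$, while the symmetric choice $a_1=a_2=-\frac{1}{2}$ is the unique one that makes the kernel homogeneity cancel properly against the weight, and simultaneously places both resulting exponents in the admissibility window.
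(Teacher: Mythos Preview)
Your proof is correct and essentially identical to the paper's: both apply the Schur test with the weight $r^{-1/2}$ and reduce the two Schur integrals to the beta-type identity \eqref{eq:useful_integral_repr} with exponents $a=\frac{1}{2}\pm\beta$, obtaining the same constant $\pi/\cos\beta\pi$ in each. The only cosmetic difference is that you pass through the explicit substitutions $t=\varrho s$ and $\varrho=ts$, whereas the paper invokes \eqref{eq:useful_integral_repr} directly with $R=\varrho$ or $R=t$.
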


\begin{proof}
If we prove
\begin{equation}\label{eq:apply_schur}
\begin{split}
&\int_0^{+\infty}\!\!\ud \varrho\, K_\beta(\varrho,t)\,\varphi(\rho)\;\leqslant\;\frac{\pi}{\cos\beta\pi}\,\varphi(t) \\
&\qquad\quad\textrm{and }\qquad \int_0^{+\infty}\!\!\ud t\, K_\beta(\varrho,t)\,\varphi(t)\;\leqslant\;\frac{\pi}{\cos\beta\pi}\,\varphi(\varrho)
\end{split}
\end{equation}
for some measurable $\varphi$ on $\mathbb{R}^+$ with $\varphi(r)>0$, then \eqref{eq:Tbeta_boundedness} follows by a standard Schur test. With the choice $\varphi(r):=r^{-1/2}$ we find
\[
\begin{split}
\int_0^{+\infty}\!\!\ud \varrho\, K_\beta(\varrho,t)\,\varphi(\rho)\;&=\;t^\beta\!\int_0^{+\infty}\!\!\ud \varrho\,\frac{\, \varrho^{-\beta-\frac{1}{2}}}{\varrho+t}\;=\;t^\beta\,\frac{\pi}{\sin (\frac{1}{2}-\beta)\pi}\,\frac{1}{\,t^{\beta+\frac{1}{2}}}\;=\;\frac{\pi\varphi(t)}{\cos \beta\pi}\,,
\end{split}
\]
where we used \eqref{eq:useful_integral_repr} with $a=\frac{1}{2}-\beta$, and analogously
\[
\begin{split}
\int_0^{+\infty}\!\!\ud t\, K_\beta(\varrho,t)\,\varphi(t)\;&=\;\varrho^{-\beta}\!\int_0^{+\infty}\!\!\ud t\,\frac{\, t^{\beta-\frac{1}{2}}}{\varrho+t}\;=\;\frac{1}{\,\varrho^\beta}\,\frac{\pi}{\sin (\frac{1}{2}+\beta)\pi}\,\frac{1}{\,\varrho^{\frac{1}{2}-\beta}}\;=\;\frac{\pi\varphi(\varrho)}{\cos \beta\pi}\,,
\end{split}
\]
where we used \eqref{eq:useful_integral_repr} with $a=\frac{1}{2}+\beta$, therefore we obtain precisely \eqref{eq:apply_schur}.
\end{proof}

The form in which we actually apply Proposition \ref{prop:Tbeta_operators} in our analysis is given by the following Corollary.

\begin{corollary}\label{cor:schur}
For given constants $\beta\in(-\frac{1}{2},\frac{1}{2})$ and $\gamma,\delta> 0$,
and a measurable function $f$ on $\mathbb{R}^+$, let
\begin{equation}\label{eq:defQkernelSchur}
 Q_{\beta,\gamma,\delta}(u,v)\;:=\;\frac{\:u^{(\frac{1}{2}-\beta) \gamma-\frac{1}{2}}  \:v^{(\frac{1}{2}+\beta)\delta-\frac{1}{2}}}{u^\gamma+v^\delta}\,,\qquad u ,v>0 \,,
\end{equation}
and
\begin{equation}\label{eq:defQopSchur}
(Q_{\beta,\gamma,\delta} f)(u)\;:=\;\int_0^{+\infty}\!\!\ud v\,Q_{\beta,\gamma,\delta}(u,v)\,f(v)\,.
\end{equation}
Then $Q_{\beta,\gamma,\delta}$ defines a bounded linear map on $L^2(\mathbb{R}^+)$ with norm
\begin{equation}\label{eq:Q_boundedness}
\|Q_{\beta,\gamma,\delta}\|_{L^2(\mathbb{R}^+)\to L^2(\mathbb{R}^+)}\;\leqslant\;\frac{1}{\sqrt{\gamma\delta}\,}\,\frac{\pi}{\cos\beta\pi}\,.
\end{equation}
\end{corollary}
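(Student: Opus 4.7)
The plan is to reduce the operator $Q_{\beta,\gamma,\delta}$ to the operator $T_\beta$ of Proposition \ref{prop:Tbeta_operators} by a pair of power-law changes of variables that act as isometries on $L^2(\mathbb{R}^+)$, and then invoke the bound \eqref{eq:Tbeta_boundedness} directly.

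First, for any $\tau>0$ I will introduce the map $U_\tau: L^2(\mathbb{R}^+,\ud t)\to L^2(\mathbb{R}^+,\ud v)$ defined by $(U_\tau g)(v):=\sqrt{\tau}\,v^{(\tau-1)/2}g(v^\tau)$. The substitution $t=v^\tau$, $\ud t=\tau v^{\tau-1}\ud v$ immediately gives $\|U_\tau g\|_{L^2}=\|g\|_{L^2}$, so each $U_\tau$ is a unitary on $L^2(\mathbb{R}^+)$ with the explicit inverse $(U_\tau^{-1}f)(t)=\tau^{-1/2}t^{-(\tau-1)/(2\tau)}f(t^{1/\tau})$.

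Next I plan to establish the operator identity
\begin{equation*}
Q_{\beta,\gamma,\delta}\;=\;\frac{1}{\sqrt{\gamma\delta}}\;U_\gamma\,T_\beta\,U_\delta^{-1}\,.
\end{equation*}
The verification is pure bookkeeping: writing $f=U_\delta\tilde f$ and substituting into $(Q_{\beta,\gamma,\delta}f)(u)$, the change of variable $t=v^\delta$ combines the kernel weight $v^{(\frac{1}{2}+\beta)\delta-\frac{1}{2}}$, the weight $v^{(\delta-1)/2}$ carried by $U_\delta$, and the Jacobian $\frac{1}{\delta}t^{1/\delta-1}$ into exactly $\delta^{-1/2}t^{\beta}$ (one checks that the three exponents of $v$ sum to $(1+\beta)\delta-1$, and the further exponents of $t$ collapse to $\beta$). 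The surviving prefactor $u^{(\frac{1}{2}-\beta)\gamma-\frac{1}{2}}$ combines, under $\varrho=u^\gamma$ and the $U_\gamma^{-1}$ weight $\gamma^{-1/2}\varrho^{-(\gamma-1)/(2\gamma)}$, into $\gamma^{-1/2}\varrho^{-\beta}$, producing precisely the kernel $K_\beta(\varrho,t)=t^\beta\varrho^{-\beta}/(\varrho+t)$ of \eqref{eq:defKbeta}.

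Once this identity is in hand, because $U_\gamma$ and $U_\delta^{-1}$ are isometries one has
\begin{equation*}
\|Q_{\beta,\gamma,\delta}\|_{L^2(\mathbb{R}^+)\to L^2(\mathbb{R}^+)}\;=\;\frac{1}{\sqrt{\gamma\delta}}\,\|T_\beta\|_{L^2(\mathbb{R}^+)\to L^2(\mathbb{R}^+)}\,,
\end{equation*}
and the bound \eqref{eq:Tbeta_boundedness} of Proposition \ref{prop:Tbeta_operators} supplies the factor $\pi/\cos\beta\pi$, yielding \eqref{eq:Q_boundedness}. No genuine analytic obstacle is expected; the only delicate step is the exponent bookkeeping in the reduction, and the admissibility hypothesis $\beta\in(-\tfrac{1}{2},\tfrac{1}{2})$ is inherited directly from Proposition \ref{prop:Tbeta_operators}, while the parameters $\gamma,\delta>0$ enter only through the scaling constants $\sqrt{\gamma},\sqrt{\delta}$ of the unitaries $U_\gamma,U_\delta$.
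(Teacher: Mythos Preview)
Your proof is correct and follows essentially the same approach as the paper: both reduce $Q_{\beta,\gamma,\delta}$ to $T_\beta$ via the power-law substitutions $u\mapsto u^\gamma$, $v\mapsto v^\delta$ and then invoke Proposition~\ref{prop:Tbeta_operators}. Your packaging of these substitutions as explicit unitaries $U_\gamma$, $U_\delta$ and the resulting operator identity $Q_{\beta,\gamma,\delta}=(\gamma\delta)^{-1/2}U_\gamma T_\beta U_\delta^{-1}$ is a slightly cleaner formulation of the same computation the paper carries out directly.
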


\begin{proof}
Setting
\[
 g(v)\;:=\;{\textstyle\frac{1}{\sqrt{d}\,}}\,v^{\frac{1-d}{2d}}\,f(v^{\frac{1}{d}})\,,
\]
the change of variable $v\mapsto v^\delta$ yields
\[
\int_{0}^{+\infty}\!\ud v\,|f(v)|^2\;=\;\int_{0}^{+\infty}\!\ud v\,|g(v)|^2
\]
and the change of variable $u\mapsto u^\gamma$, $v\mapsto v^\delta$ yields
\[
\int_{0}^{+\infty}\!\ud u\,|(Q_{\beta,\gamma,\delta}f)(u)|^2\;=\;\int_{0}^{+\infty}\!\ud v\,|{\textstyle \frac{1}{\sqrt{\gamma\delta}\,}}(T_\beta\, g)(v)|^2\,,
\]
where $T_\beta:L^2(\mathbb{R}^+)\to L^2(\mathbb{R}^+)$ is the operator defined in \eqref{eq:defKbeta}-\eqref{eq:defTbeta}. From this and from Proposition \ref{prop:Tbeta_operators},
\[
\begin{split}
 \|Q_{\beta,\gamma,\delta}f\|_{L^2(\mathbb{R}^+)}\;&=\;{\textstyle \frac{1}{\sqrt{\gamma\delta}\,}}\|T_\beta\, g\|_{L^2(\mathbb{R}^+)}\;\leqslant\;{\textstyle \frac{1}{\sqrt{\gamma\delta}\,}\,\frac{\pi}{\cos\beta\pi}}\,\|g\|_{L^2(\mathbb{R}^+)} \\
 &=\;{\textstyle \frac{1}{\sqrt{\gamma\delta}\,}\,\frac{\pi}{\cos\beta\pi}}\,\|f\|_{L^2(\mathbb{R}^+)}\,,
 \end{split}
\]
and \eqref{eq:Q_boundedness} follows.
\end{proof}

\section{An alternative proof for the transition $s=\frac{3}{2}$}\label{app:alternative_proofs}

We re-demonstrate in this Appendix, independently of the proof given for Proposition \ref{prop:high_s_in_Ds}, the following remarkable feature of the transition at $s=\frac{3}{2}$.

\begin{proposition}[Loss of genericity of the regular component for $s=\frac{3}{2}$]\label{cor:lemmaF0}
 Fix $\alpha\geqslant 0$, $\lambda>0$ and $s=\frac{3}{2}$. Suppose that $F+\kappa\,G_{\lambda}\in\mathcal{D}((-\Delta_\alpha)^{3/4})$ for some $F\in H^{3/2}(\R^3)$ and $\kappa\in\C$. Then $F^{(0)}$ defined in \eqref{eq:def_subspace_Ds} is finite.
\end{proposition}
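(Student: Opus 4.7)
The plan is to identify $F$ with the regular component produced by the canonical decomposition of Sections~\ref{sec:canonical-decomposition}--\ref{sec:regularity}, and then to show by a direct truncated-Fourier computation that the spherical limit $F^{(0)}:=\lim_{R\to+\infty}\tfrac{1}{(2\pi)^{3/2}}\int_{|p|<R}\widehat{F}(p)\,\ud p$ is finite and equals $(\alpha+\tfrac{\sqrt{\lambda}}{4\pi})\,\kappa$. To start, since $G_\lambda\notin H^{3/2}(\R^3)$, the representation of $g$ in the form $F+\kappa G_\lambda$ with $F\in H^{3/2}(\R^3)$ is unique. Applying Proposition~\ref{prop:canonical_decomp} and Proposition~\ref{lem:w_in_Hs} to $g\in\mathcal{D}((-\Delta_\alpha)^{3/4})$ one obtains $g=f_g+w_{f_g}+q_{f_g}\,G_\lambda$ with $f_g,w_{f_g}\in H^{3/2}(\R^3)$ and $q_{f_g}\in\C$, whence $F=f_g+w_{f_g}$ and $\kappa=q_{f_g}$; in particular $\kappa$ is automatically finite, being given by the convergent integral \eqref{eq:qs}.

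Next, I would split the truncated Fourier integral as $\int_{|p|<R}\widehat{F}\,\ud p=\int_{|p|<R}\widehat{f_g}\,\ud p+\int_{|p|<R}\widehat{w_{f_g}}\,\ud p$ and transform each summand along the lines of the truncation scheme used in the proof of Proposition~\ref{prop:high_s_in_Ds}(ii): I would insert into the first summand the identity $1=\tfrac{1}{\pi\sqrt{2}}\int_0^{+\infty}t^{-3/4}\tfrac{p^2+\lambda}{p^2+\lambda+t}\,\ud t$ (a special case of \eqref{eq:useful_int}), and use the explicit representation \eqref{eq:w} of $\widehat{w_{f_g}}$ in the second. A Fubini exchange on the compact domain $\{|p|<R\}\times(0,T)$, followed by the limit $T\to+\infty$ (justified through the bounds of Lemma~\ref{lem:ct_L2weighted}) and by the explicit partial-fraction evaluation of $\int_{|p|<R}\ud p\,[(p^2+\lambda)(p^2+\lambda+t)]^{-1}$, produces a term-by-term cancellation that isolates the factor $\phi(t)=\tfrac{4\pi\alpha+\sqrt{\lambda}}{4\pi\alpha+\sqrt{\lambda+t}}$ of Lemma~\ref{lem:phi}.

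Combining the two contributions yields an identity of the form
\[
\int_{|p|<R}\!\widehat{F}(p)\,\ud p\;=\;(2\pi)^{3/2}\,\frac{\sin\frac{3\pi}{4}}{\pi}\!\int_0^{+\infty}\!\!\ud t\,t^{-3/4}\,c_{f_g}(t)\,\phi(t)\,\bigl({\textstyle\frac{2}{\pi}}\arctan{\textstyle\frac{R}{\sqrt{\lambda+t}}}\bigr)\,+\,o_R(1)
\]
as $R\to+\infty$, with $c_{f_g}(t)$ as in \eqref{eq:cgeneral}. The decays $c_{f_g}(t)\lesssim(1+t)^{-1/4}$ (Lemma~\ref{lem:ct_L2weighted}) and $\phi(t)\lesssim(1+t)^{-1/2}$ (Lemma~\ref{lem:phi}) supply a uniform-in-$R$ summable majorant, while the monotone growth $\tfrac{2}{\pi}\arctan(R/\sqrt{\lambda+t})\nearrow 1$ enables the dominated-convergence passage to the limit, giving
\[
F^{(0)}\;=\;\frac{\sin\frac{3\pi}{4}}{\pi}\!\int_0^{+\infty}\!\ud t\,t^{-3/4}\,c_{f_g}(t)\,\phi(t)\;=\;\bigl(\alpha+\tfrac{\sqrt{\lambda}}{4\pi}\bigr)\,q_{f_g}\,,
\]
a finite quantity, which proves the proposition.

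The main obstacle is the handling of the two truncated contributions, each of which may fail to have a limit as $R\to+\infty$ on its own: the interchange of integration orders on the finite domain $[0,T]\times\{|p|<R\}$, the removal of the $T$-truncation, and the final passage $R\to+\infty$ must all be justified through the integrability and monotonicity properties supplied by Lemmas~\ref{lem:ct_L2weighted} and~\ref{lem:phi}, since no a priori $L^1$-bound on $\widehat{F}$ itself is at our disposal --- indeed, the finiteness of $F^{(0)}$ is precisely what is to be proved.
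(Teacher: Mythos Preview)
Your approach is correct and is essentially the argument the paper already gives in the proof of Proposition~\ref{prop:high_s_in_Ds}(ii), which you explicitly invoke. Two small points. First, the inserted identity should have $(p^2+\lambda)^{3/4}$ in the numerator, not $p^2+\lambda$. Second, the $o_R(1)$ remainder in fact splits into two pieces: the mismatch $c_{R,f_g}(t)-c_{f_g}(t)\cdot\tfrac{2}{\pi}\arctan(R/\sqrt{\lambda+t})$, which arises because Fubini on the $f_g$-side produces the \emph{truncated} $c_{R,f_g}$ rather than $c_{f_g}$, and a second discrepancy between the exact integral $\int_{|p|<R}[(p^2+\lambda)(p^2+\lambda+t)]^{-1}\,\ud p$ and its arctan surrogate. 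The first of these is not controlled by Lemmas~\ref{lem:ct_L2weighted} and~\ref{lem:phi} alone (the candidate majorant $t^{-3/4}(1+t)^{-1/4}$ fails to be integrable at infinity); one needs the positivity reduction $\widehat{f_g}\geqslant 0$ and monotone convergence, exactly as in steps (vii)--(viii) of the proof of Proposition~\ref{prop:high_s_in_Ds}(ii).

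The paper's proof of \emph{this} proposition (Appendix~\ref{app:alternative_proofs}) is deliberately different, and that is its point. Rather than matching two possibly divergent $t$-integrals through an auxiliary $T$-truncation, it recasts $\int_{|p|<R}\widehat{F}\,\ud p$ as the $L^2$-pairing of $(p^2+\lambda)^{3/4}\widehat{f_g}$ against an explicit $R$-dependent kernel, splits that kernel as $(p^2+\lambda)^{-3/4}\mathbf{1}_{\{|p|<R\}}-\Omega_R=-\Omega_R^{(2)}-\Gamma_R^{(2)}-\Gamma_R^{(3)}-\Gamma_R^{(4)}$ after an exact cancellation of the leading piece $\Gamma_R^{(1)}$, and then shows that each remaining piece has a strong or weak $L^2$-limit as $R\to+\infty$. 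Your route recycles machinery already built and delivers the bonus identity $F^{(0)}=(\alpha+\tfrac{\sqrt{\lambda}}{4\pi})\,\kappa$; the paper's route here is self-contained, avoids any auxiliary $T$-limit, and works purely at the level of $L^2$-kernel estimates, at the price of a more elaborate decomposition.
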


\begin{proof}
Owing to the decompositions given by Propositions \ref{prop:canonical_decomp} and \ref{lem:w_in_Hs}, $F=f+w$ for some $f\in H^{3/2}(\mathbb{R}^3)$ and the corresponding $w$ given by \eqref{eq:cgeneral-Fourier}-\eqref{eq:w}.
Thus,
\[
  F^{(0)}\;=\;\lim_{R\to +\infty}\frac{1}{(2\pi)^{\frac32}}\int_{|p|<R}\widehat{F}(p)\,\ud p
\]
and
\[
 \begin{split}
  \int_{|p|<R}&\widehat{F}(p)\,\ud p\;=\int_{|p|<R}\!\ud p\left(\widehat{f}(p)-\frac{1}{\pi^{\frac32}(p^2+\lambda)}\int_0^{+\infty}\!\!\ud t\,\frac{t^{\frac{1}{4}}\,c(t)}{4\pi\alpha+\sqrt{\lambda+t}}\,\frac{1}{p^2+\lambda+t}\right) \\
  =&\;\int_{\mathbb{R}^3}\!\ud p\,\widehat{f}(p)\,(p^2+\lambda)^{\frac{3}{4}}\,\frac{\mathbf{1}_{\{|p|<R\}}}{\,(p^2+\lambda)^{\frac{3}{4}}}\:- \\
  -&\,\frac{1}{(2\pi^2)^{\frac{3}{2}}}\int_{|p|<R}\!\frac{\ud p}{\:p^2+\lambda}\int_0^{+\infty}\!\!\ud t\,\frac{t^{\frac{1}{4}}}{4\pi\alpha+\sqrt{\lambda+t}}\frac{1}{p^2+\lambda+t}\int_{\mathbb{R}^3}\!\ud q\,\frac{(q^2+\lambda)^{\frac{3}{4}}\widehat{f}(q)}{q^2+\lambda+t} \\
  \equiv&\;(\mathrm{I})-(\mathrm{II})\,.
 \end{split}
\]
Since the above expression is linear in $f$, it is not restrictive to assume $\widehat{f}(p)\geqslant 0$. The term (II) consists of an integration over a ball in the $p$ variable of the Fourier transform of a function that is surely in $H^{3/2}(\mathbb{R}^3)$ (since so is $w$, Proposition \ref{lem:w_in_Hs}): therefore, the $p$-integration is obviously finite and by dominated convergence the result equals the limit, as $K\to +\infty$, of the same expression when the integration over $q$ is truncated on $|q|<K$. The double integration in $p$ and $q$, both truncated to finite balls, can then be exchanged, and the limit $K\to +\infty$ can again be taken. Thus,
\[
\begin{split}
 (\mathrm{II})\;&=\;\frac{1}{(2\pi^2)^{\frac{3}{2}}}\int_{\mathbb{R}^3}\ud q\,\widehat{f}(q)\,(q^2+\lambda)^{\frac{3}{4}}\!\int_0^{+\infty}\!\!\ud t\,\frac{t^{\frac{1}{4}}}{4\pi\alpha+\sqrt{\lambda+t}}\frac{1}{q^2+\lambda+t}\;\times \\
 &\qquad\qquad\qquad\qquad \times \int_{|p|<R}\!\frac{\ud p}{\,(p^2+\lambda)(p^2+\lambda+t)\,}\,.
\end{split}
\]
Upon renaming the variables $p\leftrightarrow q$ in (II), one therefore obtains
\[
 \int_{|p|<R}\widehat{F}(p)\,\ud p\;=\;\int_{\mathbb{R}^3}\!\ud p\,\widehat{f}(p)\,(p^2+\lambda)^{\frac{3}{4}}\,\Big(\frac{\mathbf{1}_{\{|p|<R\}}}{\,(p^2+\lambda)^{\frac{3}{4}}}-\Omega_R(p)\Big)\,,
\]
where
\[
 \Omega_R(p)\;:=\;\frac{1}{(2\pi^2)^{\frac{3}{2}}}\int_{|q|<R}\frac{\ud q}{\,q^2+\lambda\,}\!\int_0^{+\infty}\!\!\ud t\,\frac{t^{\frac{1}{4}}}{\,(4\pi\alpha+\sqrt{\lambda+t})(p^2+\lambda+t)(q^2+\lambda+t)\, }\,.
\]
In the $p$-integration above the function $p\mapsto \widehat{f}(p)\,(p^2+\lambda)^{\frac{3}{4}}$ belongs to $L^2(\mathbb{R}^3)$, since $f\in H^{3/2}(\R^3)$.

Our strategy will be the following. First, we split
\[\tag{i}
  (p^2+\lambda)^{-\frac{3}{4}}\mathbf{1}_{\{|p|<R\}}-\Omega_R\;=\;-\Omega_R^{(2)}-\big(\Gamma_R^{(2)}+\Gamma_R^{(3)}+\Gamma_R^{(4)}\big)
\]
for four $L^2$-functions $\Omega_R^{(2)}$, $\Gamma_R^{(2)}$, $\Gamma_R^{(3)}$, and $\Gamma_R^{(4)}$, and then we prove that
\[\tag{ii}
 \begin{split}
  \Omega_R^{(2)}\;&\xrightarrow[]{\;R\to +\infty\;}\;\Omega^{(2)}\qquad\textrm{strongly in }L^2(\mathbb{R}^3) \\
  \Gamma_R^{(2)}\;&\xrightarrow[]{\;R\to +\infty\;}\;\Gamma^{(2)}\qquad\textrm{strongly in }L^2(\mathbb{R}^3) \\
  \Gamma_R^{(3)}\;&\xrightarrow[]{\;R\to +\infty\;}\;0\qquad\quad\:\textrm{weakly in }L^2(\mathbb{R}^3) \\
  \Gamma_R^{(4)}\;&\xrightarrow[]{\;R\to +\infty\;}\;0\qquad\quad\;\textrm{weakly in }L^2(\mathbb{R}^3)
 \end{split}
\]
for some $L^2$-functions $\Omega^{(2)}$ and $\Gamma^{(2)}$.
As a consequence of (i) and (ii) we then conclude the convergence of the quantity
\[
 \int_{|p|<R}\widehat{F}(p)\,\ud p\;=\;-\!\int_{\mathbb{R}^3}\!\ud p\,\widehat{f}(p)\,(p^2+\lambda)^{\frac{3}{4}}\,\Big(\Omega^{(2)}(p)+\Gamma_R^{(2)}(p)+\Gamma_R^{(3)}(p)+\Gamma_R^{(4)}(p)\Big)
\]
as $R\to +\infty$.

Let us start with re-writing
\[\tag{iii}
 \Omega_R(p)\;=\;\Omega_R^{(1)}(p)+\Omega_R^{(2)}(p)\,,
\]
where
\[
 \begin{split}
  \Omega_R^{(1)}(p)\;&:=\;\frac{1}{(2\pi^2)^{\frac{3}{2}}}\int_{|q|<R}\frac{\ud q}{\,q^2+\lambda\,}\!\int_0^{+\infty}\!\!\ud t\,\frac{t^{-\frac{1}{4}}}{\,(p^2+\lambda+t)(q^2+\lambda+t)\, } \\
  \Omega_R^{(2)}(p)\;&:=\;\frac{1}{(2\pi^2)^{\frac{3}{2}}}\int_{|q|<R}\frac{\ud q}{\,q^2+\lambda\,}\!\int_0^{+\infty}\!\!\ud t\,\frac{t^{-\frac{1}{4}} (\sqrt{t}-\sqrt{\lambda+t}-4\pi\alpha)}{\,(4\pi\alpha+\sqrt{\lambda+t})(p^2+\lambda+t)(q^2+\lambda+t)\, }\,.
 \end{split}
\]
Moreover, by means of the identity
\begin{equation*}
\begin{split}
\int_0^{+\infty}\!\!\ud t\,\frac{t^{-\frac{1}{4}}}{(C+t)(D+t)}\;&= 
\;\frac{\pi\sqrt{2}}{\,C^{\frac{1}{4}}(\sqrt{C}+\sqrt{D})(C^{\frac{1}{4}}+D^{\frac{1}{4}})\,D^{\frac{1}{4}}}\qquad (C,D\geqslant 0)
\end{split}
\end{equation*}
we re-write
\begin{equation*}
\begin{split}
\Omega_R^{(1)}(p)\;&=\;\frac{1}{\,2\pi^2(p^2+\lambda)^{\frac{1}{4}}}\;\times \\
&\qquad\times\int_{|q|<R}\frac{\ud q}{(\sqrt{p^2+\lambda}+\sqrt{q^2+\lambda})((p^2+\lambda)^{\frac{1}{4}}+(q^2+\lambda)^{\frac{1}{4}})(q^2+\lambda)^{\frac{5}{4}}} \\
&=\;A^{-\frac{1}{2}}I_{\lambda,R}(A)\,,
\end{split}
\end{equation*}
having set $A:=\sqrt{p^2+\lambda}$ and
\begin{equation*}
I_{\lambda,R}(A)\;:=\frac{1}{\,2\pi^2}\int_{|q|<R}\frac{\ud q}{(A+\sqrt{q^2+\lambda})(\sqrt{A}+(q^2+\lambda)^{\frac{1}{4}})(q^2+\lambda)^{\frac{5}{4}}}\,.
\end{equation*}
Let us then split
\[
 \begin{split}
  \Omega_R^{(1)}(p)\;&=\;A^{-\frac{1}{2}}I_{\lambda,R}(A) \\
  &=\;\mathbf{1}_{\{|p|<R\}}A^{-\frac{1}{2}}I_{0,+\infty}(A) \\
  &\qquad +\mathbf{1}_{\{|p|<R\}}A^{-\frac{1}{2}}\big(I_{\lambda,+\infty}(A)-I_{0,+\infty}(A)\big) \\
  &\qquad +\mathbf{1}_{\{|p|<R\}}A^{-\frac{1}{2}}\big(I_{\lambda,R}(A)-I_{\lambda,+\infty}(A)\big) \\
  &\qquad +\mathbf{1}_{\{|p|>R\}}A^{-\frac{1}{2}}I_{\lambda,R}(A) \\
  &\equiv\;\Gamma_R^{(1)}(p)+\Gamma_R^{(2)}(p)+\Gamma_R^{(3)}(p)+\Gamma_R^{(4)}(p)\,.
 \end{split}
\]
Since
\begin{align*}
I_{0,+\infty}(A)\;&=\;\frac{1}{2\pi^2}\int_{\mathbb{R}^3}\frac{\ud q}{(A+q)(\sqrt{A}+\sqrt{q})\,q^{\frac{5}{2}}}\;=\;\frac{2}{\pi}\int_0^{+\infty}\!\frac{\ud r}{(A+r)(\sqrt{A}+\sqrt{r})\,\sqrt{r\,}}\\
&=\;\frac{2}{\pi A}\int_0^{+\infty}\!\frac{\ud r}{(1+r)(1+\sqrt{r})\,\sqrt{r\,}}\;=\;\frac{1}{A}
\end{align*}
and hence
\[
 \Gamma_R^{(1)}(p)\;=\;\mathbf{1}_{\{|p|<R\}}A^{-\frac{1}{2}}I_{0,+\infty}(A)\;=\;\frac{\mathbf{1}_{\{|p|<R\}}}{\,(p^2+\lambda)^{\frac{3}{4}}}\,,
\]
then an exact cancellation yields
\[\tag{iv}
  (p^2+\lambda)^{-\frac{3}{4}}\mathbf{1}_{\{|p|<R\}}-\Omega_R^{(1)}\;=\;-\big(\Gamma_R^{(2)}(p)+\Gamma_R^{(3)}(p)+\Gamma_R^{(4)}(p)\big)\,.
\]
Thus, (iii) and (iv) prove (i).

Let us proceed proving the statements in (ii).
Since
\[
 |\sqrt{t\,}-\sqrt{\lambda+t\,}-4\pi\alpha|\;\leqslant\;\sqrt{\lambda}+4\pi\alpha
\]
and
\[
 q^2+\lambda+t\;\gtrsim\;(q^2+\lambda)^{\frac{5}{8}}\,t^{\frac{3}{8}}\,,
\]
then
\[
  |\Omega_R^{(2)}(p)|\;\lesssim\;\frac{1}{p^2+\lambda}\int_{|q|<R}\frac{\ud q}{\,(q^2+\lambda)^\frac{13}{8}}\!\int_0^{+\infty}\!\!\ud t\,\frac{\ud t}{\,t^{\frac{1}{4}}\,\sqrt{\lambda+t}\;t^{\frac{3}{8}} }\;\lesssim\;\frac{1}{p^2+\lambda}\,.
\]
This, and the positivity of $\Omega^{(2)}$ shows at once (by dominated convergence) that  $\Omega^{(2)}$ belongs to $L^2(\mathbb{R}^3)$ and converges strongly to a $L^2$-function as $R\to +\infty$. The first statement in (ii) is proved.

Next, we analyse $\Gamma_R^{(2)}$. We find
\[
 \begin{split}
  -\Gamma_R^{(2)}(p)\;&=\; \mathbf{1}_{\{|p|<R\}}A^{-\frac{1}{2}}\big(I_{0,+\infty}(A)-I_{\lambda,+\infty}(A)\big) \\
  &=\;\frac{\,\mathbf{1}_{\{|p|<R\}}\,}{2\pi^2}\int_{\mathbb{R}^3}\ud q\,\Big(\frac{1}{(A+q)(\sqrt{A}+\sqrt{q})\,q^{\frac{5}{2}}}\,- \\
  &\qquad\qquad\qquad\qquad  -\frac{1}{(A+\sqrt{q^2+\lambda})(\sqrt{A}+(q^2+\lambda)^{\frac{1}{4}})(q^2+\lambda)^{\frac{5}{4}}}\Big) \\
  &=\;\frac{\,\mathbf{1}_{\{|p|<R\}}\,}{2\pi^2}\int_{\mathbb{R}^3}\frac{N_1(A,q)+N_2(A,q)+N_3(A,q)+N_4(A,q)}{D(A,q)}\,\ud q\,,
 \end{split}
\]
where
\[
\begin{array}{lll}
 N_1(A,q)\;:=\;A^{\frac{3}{2}}\,((q^2+\lambda)^{\frac{5}{4}}-q^{\frac{5}{2}})\,, & & N_2(A,q)\;:=\;A((q^2+\lambda)^{\frac{3}{2}}-q^3)\,, \\
 N_3(A,q)\;:=\;A^{\frac{1}{2}}((q^2+\lambda)^{\frac{7}{4}}-q^{\frac{7}{2}})\,, & &
 N_4(A,q)\;:=\;(q^2+\lambda)^{2}-q^{4}\,,
\end{array}
\]
and
\[
 D(A,q)\;:=\;(A+\sqrt{q^2+\lambda})(\sqrt{A}+(q^2+\lambda)^{\frac{1}{4}})(A+q)(\sqrt{A}+\sqrt{q})\,(q^2+\lambda)^{\frac{5}{4}}q^{\frac{5}{2}}\,.
\]
Clearly, $D(A,q)\geqslant 0$ and $N_j(A,q)\geqslant 0$, $j\in\{1,2,3,4\}$.
From
\begin{equation*}
N_1(A,q)\lesssim A^{\frac{3}{2}}\langle q\rangle^{\frac{1}{2}},\quad N_2(A,q)\lesssim A\langle q\rangle,\quad N_3(A,q)\lesssim A^{\frac{1}{2}}\langle q\rangle^{\frac{3}{2}},\quad D(A,q)\gtrsim A^{3}q^{\frac{5}{2}}\langle q\rangle^{\frac{5}{2}}
\end{equation*}
we deduce
\[
\begin{split}
\int_{\mathbb{R}^3}\frac{N_1(A,q)+N_2(A,q)+N_3(A,q)}{D(A,q)}\,\ud q\;&\lesssim\;\int_{\mathbb{R}^3}\frac{A^{\frac{3}{2}}\langle q\rangle^{\frac{1}{2}}+A\langle q\rangle+ A^{\frac{1}{2}}\langle q\rangle^{\frac{3}{2}}}{A^{3}q^{\frac{5}{2}}\langle q\rangle^{\frac{5}{2}}}\,\ud q\\
&\lesssim\; A^{-\frac{3}{2}}\int_{\mathbb{R}^3}\frac{\ud q}{\,q^{\frac{5}{2}}\langle q\rangle}\;\lesssim\; A^{-\frac{3}{2}}\,,
\end{split}
\]
and from
\begin{equation*}
N_4(A,q)\lesssim \langle q\rangle^2,\quad D(A,q)\geqslant A^2q^{\frac{5}{2}}\langle q\rangle^{\frac{7}{2}}
\end{equation*}
we also deduce
\begin{equation*}
\int_{\mathbb{R}^3}\frac{N_4(A,q)}{D(A,q)}\,\ud q\;\lesssim\; A^{-2}\!\int_{\mathbb{R}^3}\frac{\ud q}{\,q^{\frac{5}{2}}\langle q\rangle^{\frac{3}{2}}}\;\lesssim\; A^{-2}\,.
\end{equation*}
Therefore,
\begin{equation*}
\Gamma_R^{(2)}(p)\;\lesssim\; \mathbf{1}_{\{|p|<R\}}\,A^{-\frac{1}{2}}(A^{-\frac{3}{2}}+A^{-2})\;\lesssim\; \mathbf{1}_{\{|p|<R\}}\,(p^2+\lambda)^{-1}\,.
\end{equation*}
This bound shows, again by dominated convergence, that $\Gamma_R^{(2)}$ belongs to $L^2(\mathbb{R}^3)$ and converges strongly to a $L^2$-function as $R\to +\infty$, thus proving the second statement in (ii).

Let us now analyse $\Gamma_R^{(3)}$. One has
\[
\begin{split}
 &|\Gamma_R^{(3)}(p)|\;=\;\mathbf{1}_{\{|p|<R\}}A^{-\frac{1}{2}}\big(I_{\lambda,+\infty}(A)-I_{\lambda,R}(A)\big) \\
 &=\;\frac{\,\mathbf{1}_{\{|p|<R\}}\,}{\,(p^2+\lambda)^{\frac{1}{4}}}\;\frac{2}{\pi}\int_{R}^{+\infty}\!\!\frac{r^2\,\ud r}{(\sqrt{p^2+\lambda}+\sqrt{r^2+\lambda})((p^2+\lambda)^{\frac{1}{4}}+(r^2+\lambda)^{\frac{1}{4}})(r^2+\lambda)^{\frac{5}{4}}} \\
 &\leqslant\;\frac{\,\mathbf{1}_{\{|p|<R\}}\,}{\,(p^2+\lambda)^{\frac{3}{4}}}\;\frac{2}{\pi}\int_{\frac{R}{\sqrt{p^2+\lambda}}}^{+\infty}\frac{\ud r}{(1+r)(1+\sqrt{r})\sqrt{r}\,} \\
 &=\;\frac{\,\mathbf{1}_{\{|p|<R\}}\,}{\,(p^2+\lambda)^{\frac{3}{4}}}\;L\big({\textstyle \frac{R}{\sqrt{p^2+\lambda}}}\big)\,,
\end{split}
\]
having set
\[
 L(s)\;:=\;\frac{2}{\pi}\int_{s}^{+\infty}\!\!\frac{\ud r}{(1+r)(1+\sqrt{r})\sqrt{r}\,}\;=\;\frac{2}{\pi}\arctan\frac{1}{\sqrt{s}\,}-\frac{1}{\pi}\ln\,\frac{(1+\sqrt{s})^2}{1+s}\,.
\]
It is straightforward to see that $s\mapsto L(s)$ is strictly monotone for $s\in[0,+\infty)$, with $L(0)=0$ and with the asymptotics
\[
 L(s)\;=\;{\textstyle\frac{2}{\pi}}s^{-1}-{\textstyle\frac{4}{3\pi}}\,s^{-\frac{3}{2}}+O(s^{-3})\qquad\textrm{ as }s\to +\infty\,.
\]
Therefore
\[
 L(s)\;\lesssim\;\frac{1}{s+1}\,,\qquad s\in[0,+\infty)\,,
\]
and plugging the latter bound into the above estimate for $|\Gamma_R^{(3)}(p)|$ yields
\[
 |\Gamma_R^{(3)}(p)|\;\lesssim\;\frac{\mathbf{1}_{\{|p|<R\}}}{\,(p^2+\lambda)^{\frac{1}{4}}\,(R+\sqrt{p^2+\lambda})\,}\,.
\]
This implies
\[
 \begin{split}
  \int_{\mathbb{R}^3}|\Gamma_R^{(3)}(p)|^2\,\ud p\;&=\int_{|p|<R}\frac{\ud p}{\,\sqrt{p^2+\lambda}\,(R+\sqrt{p^2+\lambda})^2\,} \\
  &\leqslant\;4\pi\int_0^R\!\frac{r}{(R+r)^2}\,\ud r\;=\;4\pi(\ln 2-{\textstyle\frac{1}{2}})\,,
 \end{split}
\]
which shows that $\Gamma_R^{(3)}$ belongs to $L^2(\mathbb{R}^3)$ with norm uniformly bounded in $R$. Moreover, $\Gamma_R^{(3)}(p)\to 0$ point-wise s $R\to +\infty$. These two properties together imply that $\Gamma_R^{(3)}\to 0$ weakly in $L^2(\mathbb{R}^3)$. The third statement in (iii) is proved.

Last, let us analyse $\Gamma_R^{(4)}$. By definition $\Gamma_R^{(4)}(p)=\mathbf{1}_{\{|p|>R\}}A^{-\frac{1}{2}}I_{\lambda,R}(A)\to 0$ point-wise as $R\to +\infty$. Moreover,
\[
\begin{split}
 &\Gamma_R^{(4)}(p)\;=\;\mathbf{1}_{\{|p|>R\}}A^{-\frac{1}{2}}I_{\lambda,R}(A) \\
 &=\;\frac{\,\mathbf{1}_{\{|p|>R\}}\,}{\,(p^2+\lambda)^{\frac{1}{4}}}\;\frac{2}{\pi}\int_{0}^{R}\frac{r^2\,\ud r}{(\sqrt{p^2+\lambda}+\sqrt{r^2+\lambda})((p^2+\lambda)^{\frac{1}{4}}+(r^2+\lambda)^{\frac{1}{4}})(r^2+\lambda)^{\frac{5}{4}}} \\
 &\leqslant\;\frac{\,\mathbf{1}_{\{|p|>R\}}\,}{\,(p^2+\lambda)^{\frac{3}{4}}}\;\frac{2}{\pi}\int_{0}^{\frac{R}{\sqrt{p^2+\lambda}}}\frac{\ud r}{(1+r)(1+\sqrt{r})\sqrt{r}\,} \\
 &=\;\frac{\,\mathbf{1}_{\{|p|>R\}}\,}{\,(p^2+\lambda)^{\frac{3}{4}}}\;\big(1-L\big({\textstyle \frac{R}{\sqrt{p^2+\lambda}}}\big)\big)\,.
\end{split}
\]
From
\[
 1-L(s)\;=\;\frac{2}{\pi}\arctan\sqrt{s}+\frac{1}{\pi}\ln\,\frac{(1+\sqrt{s})^2}{1+s}
\]
it is straightforward to see that $s\mapsto 1-L(s)$ is strictly monotone increasing for $s\in[0,+\infty)$, with
asymptotics $1-L(s)\to 1$ as $s\to +\infty$ and
\[
 1-L(s)\;=\;{\textstyle\frac{4}{\pi}}\,s^{\frac{1}{2}}-{\textstyle\frac{2}{\pi}}\,s+{\textstyle\frac{4}{5\pi}}\,s^{\frac{5}{2}}+o(s^{\frac{5}{2}})\qquad\textrm{ as }s\downarrow 0\,.
\]
In particular,
\[
 1-L(s)\;\leqslant\;\frac{4}{\pi}\,\sqrt{s}\,,\qquad 0\leqslant s\leqslant 2\,.
\]
Now, since $\Gamma_R^{(4)}$ has support in $|p|\geqslant R>\sqrt{\frac{R^2}{4}-\lambda}$ (having taken $R$ eventually large enough), whence $s=\frac{R}{\sqrt{p^2+\lambda}\,}\leqslant 2$, the above lower bound for $1-L(s)$ reads
\[
 1-L\big({\textstyle \frac{R}{\sqrt{p^2+\lambda}}}\big)\;\leqslant\;\frac{4\sqrt{R}}{\,\pi(p^2+\lambda)^{\frac{1}{4}}}\,,\qquad |p|>R\,,
\]
which implies that the estimate on $\Gamma_R^{(4)}$ can now be completed as
\[
 \Gamma_R^{(4)}(p)\;\leqslant\;\frac{\,\mathbf{1}_{\{|p|>R\}}\,}{\,(p^2+\lambda)^{\frac{3}{4}}}\;\big(1-L\big({\textstyle \frac{R}{\sqrt{p^2+\lambda}}}\big)\big)\;\lesssim\;\sqrt{R}\;\frac{\,\mathbf{1}_{\{|p|>R\}}\,}{\,p^2+\lambda\,}\,.
\]
Therefore,
\[
\begin{split}
 \int_{\mathbb{R}^3}|\Gamma_R^{(4)}(p)|^2\,\ud p\;&\lesssim\;R\int_{|p|>R}\frac{\ud p}{\,(p^2+\lambda)^2}\;\lesssim\;R\int_R^{+\infty}\!\frac{\ud r}{\,r^2}\;=\;1\,.
\end{split}
\]
The latter estimate shows that $\Gamma_R^{(4)}$ belongs to $L^2(\mathbb{R}^3)$ with norm bounded  uniformly in $R$. This, together with the fact that $\Gamma_R^{(4)}\to 0$ point-wise in $p$, implies that  $\Gamma_R^{(4)}$ has a unique weak limit in $L^2(\mathbb{R}^3)$ and this limit is $0$.
The fourth statement in (ii) and hence the whole (ii) is then established, and the proof is concluded.
\end{proof}

%
%
%



\def\cprime{$'$}

\end{document}